\title[S\l upecki digraphs]
{S\l upecki digraphs}
\author[\'A. Kunos]{\'Ad\'am Kunos}
\address{Bolyai Institute, University of Szeged}
\email{akunos@math.u-szeged.hu}
\author[B. Larose]{Beno\^it Larose}
\address{Laboratoire d'Alg\`ebre, de Combinatoire et d'Informatique Math\'ematique (LACIM) \\
Universit\'e du Qu\'ebec \`a Montr\'eal}
\email{blarose@crcmail.net}
\author[D. E. Pazmi\~no P.]{David Emmanuel Pazmi\~no Pullas}
\address{Laboratoire d'Alg\`ebre, de Combinatoire et d'Informatique Math\'ematique (LACIM) \\
Universit\'e du Qu\'ebec \`a Montr\'eal}
\email{pazmino\_pullas.david\_emmanuel@courrier.uqam.ca}
\thanks{We wish to thank Barnaby Martin for his very helpful comments.}
\date{\today}
\begin{document}
\newtheorem{dummy}{Dummy}[section]

\newtheorem{conj}[dummy]{Conjecture}
\newtheorem{lemma}[dummy]{Lemma}
\newtheorem{prop}[dummy]{Proposition}
\newtheorem{theorem}[dummy]{Theorem}
\newtheorem{corollary}[dummy]{Corollary}
\newtheorem{definition}[dummy]{Definition}
\newtheorem{example}[dummy]{Example}

%SHORTCUTS FOR MATHCAL

\newcommand{\cA}{\mathcal{A}}
\newcommand{\cB}{\mathcal{B}}
\newcommand{\cC}{\mathcal{C}}
\newcommand{\cD}{\mathcal{D}}
\newcommand{\cE}{\mathcal{E}}
\newcommand{\cF}{\mathcal{F}}
\newcommand{\cG}{\mathcal{G}}
\newcommand{\cH}{\mathcal{H}}
\newcommand{\cI}{\mathcal{I}}
\newcommand{\cJ}{\mathcal{J}}
\newcommand{\cK}{\mathcal{K}}
\newcommand{\cL}{\mathcal{L}}
\newcommand{\cM}{\mathcal{M}}
\newcommand{\cN}{\mathcal{N}}
\newcommand{\cO}{\mathcal{O}}
\newcommand{\cP}{\mathcal{P}}
\newcommand{\cQ}{\mathcal{Q}}
\newcommand{\cR}{\mathcal{R}}
\newcommand{\cS}{\mathcal{S}}
\newcommand{\cT}{\mathcal{T}}
\newcommand{\cU}{\mathcal{U}}
\newcommand{\cV}{\mathcal{V}}
\newcommand{\cW}{\mathcal{W}}
\newcommand{\cX}{\mathcal{X}}
\newcommand{\cY}{\mathcal{Y}}
\newcommand{\cZ}{\mathcal{Z}}

%SHORTCUTS FOR MATHBF

\newcommand{\bA}{\mathbf{A}}
\newcommand{\bB}{\mathbf{B}}
\newcommand{\bC}{\mathbf{C}}
\newcommand{\bD}{\mathbf{D}}
\newcommand{\bE}{\mathbf{E}}
\newcommand{\bF}{\mathbf{F}}
\newcommand{\bG}{\mathbf{G}}
\newcommand{\bH}{\mathbf{H}}
\newcommand{\bI}{\mathbf{I}}
\newcommand{\bJ}{\mathbf{J}}
\newcommand{\bK}{\mathbf{K}}
\newcommand{\bL}{\mathbf{L}}
\newcommand{\bM}{\mathbf{M}}
\newcommand{\bN}{\mathbf{N}}
\newcommand{\bO}{\mathbf{O}}
\newcommand{\bP}{\mathbf{P}}
\newcommand{\bQ}{\mathbf{Q}}
\newcommand{\bR}{\mathbf{R}}
\newcommand{\bS}{\mathbf{S}}
\newcommand{\bT}{\mathbf{T}}
\newcommand{\bU}{\mathbf{U}}
\newcommand{\bV}{\mathbf{V}}
\newcommand{\bW}{\mathbf{W}}
\newcommand{\bX}{\mathbf{X}}
\newcommand{\bY}{\mathbf{Y}}
\newcommand{\bZ}{\mathbf{Z}}
\newcommand{\bGS}{{\bf S}}

%SHORTCUTS FOR MATHbb

\newcommand{\bbA}{\mathbb{A}}
\newcommand{\bbB}{\mathbb{B}}
\newcommand{\bbC}{\mathbb{C}}
\newcommand{\bbD}{\mathbb{D}}
\newcommand{\bbE}{\mathbb{E}}
\newcommand{\bbF}{\mathbb{F}}
\newcommand{\bbG}{\mathbb{G}}
\newcommand{\bbH}{\mathbb{H}}
\newcommand{\bbI}{\mathbb{I}}
\newcommand{\bbJ}{\mathbb{J}}
\newcommand{\bbK}{\mathbb{K}}
\newcommand{\bbL}{\mathbb{L}}
\newcommand{\bbM}{\mathbb{M}}
\newcommand{\bbN}{\mathbb{N}}
\newcommand{\bbO}{\mathbb{O}}
\newcommand{\bbP}{\mathbb{P}}
\newcommand{\bbQ}{\mathbb{Q}}
\newcommand{\bbR}{\mathbb{R}}
\newcommand{\bbS}{\mathbb{S}}
\newcommand{\bbT}{\mathbb{T}}
\newcommand{\bbU}{\mathbb{U}}
\newcommand{\bbV}{\mathbb{V}}
\newcommand{\bbW}{\mathbb{W}}
\newcommand{\bbX}{\mathbb{X}}
\newcommand{\bbY}{\mathbb{Y}}
\newcommand{\bbZ}{\mathbb{Z}}

\newcommand{\anote}[1]{{\color{purple} [{\bf Adam note:} #1]}}
\newcommand{\bnote}[1]{{\color{red} [{\bf Benoit note:} #1]}}
\newcommand{\dnote}[1]{{\color{blue} [{\bf David note:} #1]}}

\newcommand{\csp}{\mathsf{CSP}}

\begin{abstract}  Call a finite relational structure {\em $k$-S\l upecki} if its only surjective $k$-ary polymorphisms are essentially unary, and {\em S\l upecki} if it is  $k$-S\l upecki for all $k \geq 2$. We present conditions, some necessary and some sufficient, for a reflexive digraph to be S\l upecki. We prove that all digraphs that triangulate a 1-sphere are S\l upecki, as are all the ordinal sums $m \oplus n$ ($m,n \geq 2$). We prove that the posets $\bbP = m \oplus n \oplus k$ are not 3-S\l upecki for $m,n,k \geq 2$, and prove there is a bound $B(m,k)$ such that $\bbP$ is 2-S\l upecki if and only if $n > B(m,k)+1$; in particular there exist posets that are 2-S\l upecki but not 3-S\l upecki.   \end{abstract}

\maketitle
%\section{}
%\subsection{}

%\acks{We wish to thank many people for their help.}

%\bnote{THIS IS A DRAFT DO NOT DISTRIBUTE}

\section{Introduction}  

%\begin{center} {\bf In this paper, all digraphs are assumed to be reflexive.} \end{center}  \mbox{}\\

For our purposes, a digraph is a finite, non-empty set $A$ together with a binary relation on $A$.  {\bf In this paper, all digraphs are assumed to be reflexive}, i.e. that the binary relation contains $(a,a)$ for all $a \in A$. We study the surjective polymorphisms of these objects; these operations play an important role in the complexity of the related quantified constraint satisfaction problem (QCSP) and the surjective H-colouring problem, as do idempotent polymorphisms in the study of the associated CSP (see for instance \cite{MR3631056, laretal, larmarpau, DBLP:conf/dagstuhl/Martin17}). 

A digraph is $k$-{\em idempotent trivial} if all its $k$-ary idempotent polymorphisms are projections, and {\em idempotent trivial} if it is $k$-idempotent trivial for all $k$. Similarly, we say a digraph is $k$-{\em S\l upecki} if all its $k$-ary surjective polymorphisms are essentially unary, and {\em S\l upecki} if it is $k$-S\l upecki for all $k$. Clearly  a digraph is idempotent trivial if it is S\l upecki. It is known that these properties respectively imply NP-hardness and PSPACE-hardness of the CSP and QCSP naturally associated to the digraph (see \cite{MR3631056} and \cite{DBLP:conf/dagstuhl/Martin17}). The S\l upecki property has also been studied independently, see for example \cite{MR1020459} and \cite{MR2480633}.

The starting point of our investigation was to find some workable sufficient condition for a digraph to be S\l upecki: in particular, we wanted to verify that all cycles of girth at least 4 have this property.  There is a known sufficient condition for a digraph to be idempotent trivial that relies on the homotopy of a simplicial complex naturally associated to the digraph (see section \ref{sect-prelim}): if for some $n>0$ the $n$-th homotopy group of the space is non-trivial, but the $n$-th homotopy group of any proper retract is trivial, and provided the identity is isolated in the digraph of endomorphisms, then the digraph is idempotent trivial \cite{MR2232298}. It turns out that digraphs that triangulate $n$-spheres satisfy these properties, and thus are potentially S\l upecki. At first glance, the most straightforward approach to show a structure is S\l upecki is to pp-define the S\l upecki relation from its basic relations (see Section \ref{sect-gadgets}), and we manage to do this for a variety of digraphs triangulating $n$-spheres; however, this approach seems impractical for general cycles. In section \ref{sect-conditions} we present an alternative sufficient condition: provided the identity is isolated in the digraph of endomorphisms, if every onto polymorphism of the idempotent trivial digraph is a retraction, then the digraph is S\l upecki (see Lemma \ref{lemma-embedding} and Theorem \ref{thm-embedding}). In a companion paper, this result is used to prove that cycles of girth at least 4 are indeed S\l upecki \cite{LLP}.  The slightly annoying technical condition on the identity endomorphism can be removed for several families of digraphs (Lemmas \ref{629952} and \ref{lemma-not-strong-id-alone}), but we provide an example of a S\l upecki digraph that does not satisfy it (Lemma \ref{lemma-example}). 
However, we do not know if the condition can be removed from the statement of Lemma \ref{lemma-embedding}. 

As mentioned above, examining small digraphs triangulating spheres, it turns out that many are S\l upecki, such as for instance, all digraphs that triangulate a 1-sphere (Theorem \ref{theorem-big-paper}), but in section \ref{sect-sums} we provide an example of a small digraph triangulating a 2-sphere which isn't S\l upecki: indeed, the poset $2 \oplus 2 \oplus 2$ satisfies the conditions of Theorem \ref{thm-not-slu}, and is the suspension of a 4-cycle. Furthermore, we provide examples of posets that are 2-S\l upecki but not 3-S\l upecki (Theorem \ref{757370}); contrast this with the fact that all digraphs that are 2-idempotent trivial are idempotent trivial (see \cite{prs}). 

We now outline the contents of the paper. In section \ref{sect-prelim} we introduce basic terminology and notation. In section \ref{sect-conditions} we present various sufficient and necessary conditions for a digraph to be S\l upecki. The main result of that section, Theorem \ref{thm-embedding}, is a crucial tool for the main result of the paper \cite{LLP}, namely that all cycles of girth at least 4 are S\l upecki, and is also used in section \ref{sect-sums} to provide some examples of S\l upecki posets.  In section \ref{sect-gadgets} we prove that various digraphs that triangulate spheres are indeed S\l upecki, via pp-definitions using gadgets. We also state the result that all digraphs that triangulate 1-spheres are S\l upecki (Theorem \ref{theorem-big-paper}). In section \ref{sect-sums} we consider posets that are ordinal sums of antichains; we exhibit small digraphs that triangulate spheres but are not S\l upecki (Theorem \ref{thm-not-slu}); we also provide examples of posets that are 3-S\l upecki but not 2-S\l upecki Theorem \ref{757370}. In section \ref{sect-conclusion} we discuss various open questions that follow naturally from our results.

\section{Preliminaries: notation, definitions, etc.} \label{sect-prelim}

\subsection{Reflexive digraphs}

A {\em digraph} $\bbG = \langle G;E \rangle$ consists of a non-empty set  $G$ of {\em vertices} and  a binary relation $E$ on $G$; the pairs in $E$ are called the {\em arcs} or {\em edges} of $\bbG$.  It is {\em reflexive} if $(x,x) \in E$ for all $x \in E$; an arc of the form $(x,x)$ we call a {\em loop}. When we consider digraphs $\bbG$, $\bbH$, etc. we denote their set of vertices by $G$, $H$, etc. We sometimes write $u \rightarrow v$ to mean that $(u,v)$ is an arc of a digraph. Consider the undirected graph $\bbG_{Sym}$ obtained from the digraph $\bbG$ as follows: it has the same set of vertices $G$, and two vertices $x$ and $y$ are adjacent if one of $(x,y)$ or $(y,x)$ is an arc of $\bbG$. We say $\bbG$ is {\em connected} if $\bbG_{Sym}$ is connected.  We say the digraph $\bbG$ is {\em symmetric} if $x\rightarrow y$ implies $y \rightarrow x$ for all $x,y \in G$ (i.e. $\bbG$ is undirected). Let $\bbG$ be a digraph and $x,y \in G$. We say that $x$ and $y$ are in the same strong component of $\bbG$ if there exist $x=x_0, x_1,\cdots,x_t=y$ and $y=y_0, y_1,\cdots,y_s=x$ such that $x_i \rightarrow x_{i+1}$ for all $0 \leq i \leq t-1$ and  $y_j \rightarrow y_{j+1}$ for all $0 \leq j\leq s-1$. Clearly every digraph is partitioned into strongly connected components. We say $\bbG$ is {\em strongly connected} if it has exactly one strong component.  We say that a digraph $\bbH$ is {\em embedded} in a digraph $\bbG$ if it is isomorphic to an induced subdigraph of $\bbG$.

Let $\bbG$ and $\bbH$ be digraphs. A map $f:G \rightarrow H$ is a {\em homomorphism} if it preserves arcs, i.e. if $(x,y)$ is an arc of $\bbG$ then $(f(x),f(y))$ is an arc of $\bbH$.   If $\bbG$ and $\bbH$ are digraphs and we write $f:\bbH \rightarrow \bbG$ it is understood that $f$ is a homomorphism. We say  $f:\bbH \rightarrow \bbG$ is an {\em embedding} if it is an isomorphism onto its image (and hence $\bbH$ is embedded in $\bbG$). The {\em product} $\bbG \times \bbH$ of two digraphs $\bbG$ and $\bbH$ is the usual product of relational structures, i.e.  the digraph with set of vertices $G \times H$ and arcs $((g_1,h_1),(g_2,h_2))$ where $(g_1,g_2)$ and $(h_1,h_2)$ are arcs of $\bbG$ and $\bbH$ respectively; notice that $\bbG \times \bbH$ is reflexive if both $\bbG$ and $\bbH$ are reflexive. For every positive integer $k$, the product of $k$ digraphs is defined is the obvious way;  $\bbG^k$ is the product of $\bbG$ with itself $k$ times. A $k$-ary {\em polymorphism} of $\bbG$ is a homomorphism from $\bbG^k$ to $\bbG$; the integer $k$ is the {\em arity} of $f$; $f$ is {\em idempotent} if $f(x,\dots,x)=x$ for all $x \in G$. It is {\em essentially unary} if there exists some $1 \leq i \leq k$ and a homomorphism $g:\bbG \rightarrow \bbG$ such that $f(x_1,\dots,x_k) = g(x_i)$ for all $x_j \in G$; it is a {\em projection} if furthermore the homomorphism $g$ is the identity. 

\begin{definition}   Let $k \geq 2$. We say that the digraph $\bbG$ is {\em $k$-idempotent trivial} if all its idempotent $k$-ary polymorphisms are projections. We say that the digraph $\bbG$ is {\em  idempotent trivial} if it is $k$-idempotent trivial for all $k \geq 2$. \end{definition}

\begin{definition} Let $k \geq 2$. We say that the digraph $\bbG$ is {\em $k$-S\l upecki} if all its surjective $k$-ary polymorphisms are essentially unary. We say the digraph $\bbG$ is {\em S\l upecki} if it is $k$-S\l upecki for all $k \geq 2$. \end{definition}

It is easy to see that if a digraph is $k$-S\l upecki for some $k \geq 3$ then it is $(k-1)$-S\l upecki. Note also that a S\l upecki digraph is idempotent trivial.

\begin{definition}
Let $k \geq 0$.  A {\em path of length $k$} is a  digraph with vertex set $\{0,1,\dots,k\}$ where for each $0 \leq i \leq k-1$, one or both of the arcs $\{(i,i+1),(i+1,i)\}$ is 
present, and there are no other arcs.  \end{definition} 

\begin{definition} Let $n \geq 3$. An  {\em $n$-cycle} is a  digraph with vertex set $\{0,1,\dots,n-1\}$ where for each $0 \leq i < n-1$, one or both of the arcs $\{(i,i+1),(i+1,i)\}$ is present,  one or both of the arcs $\{(n-1,0),(0,n-1)\}$ is present, and there are no other arcs. The integer $n$ is called the {\em girth} of the cycle.  \end{definition}

We now discuss briefly the connection of reflexive digraphs to simplicial complexes: all details can be found in  \cite{MR2101222}, see also \cite{MR2232298}. A digraph $\bbP$ is a {\em poset} if its edge relation is reflexive, antisymmetric (if $(x,y)$ and $(y,x)$ are arcs then $x=y$) and transitive (if $(x,y)$ and $(y,z)$ are arcs so is $(x,z)$). We usually denote the relation on a poset by $\leq$. A poset $\bbP$ is a {\em chain} or {\em totally ordered set} if for every $x,y \in P$ either $x\leq y$ or $y \leq x$.

Let $\bbG$ be a digraph. Define a  simplicial complex $K(\bbG)$ as follows: a subset $S$ of $G$ is a simplex of $K(\bbG)$ if there exists a totally ordered set $\bbP$ and a homomorphism from $\bbP$ to $\bbG$ whose image is $S$.  We will call $K(\bbG)$ the {\em simplicial realisation of $\bbG$}, and we will say that $\bbG$ {\em triangulates} a topological space $X$ if  $X$ is homeomorphic to the geometric realisation of $K(\bbG).$

 Let $\bbG$ be a digraph. The {\em suspension of $\bbG$} is the digraph obtained from $\bbG$ by adding two vertices $u$ and $v$ that are adjacent to every vertex of $\bbG$ (by a two-way edge) and nothing else (see Figure \ref{figure-suspension}). It turns out that the geometric realisation of the suspension of $\bbG$ is homeomorphic to the suspension of the geometric realisation of $\bbG$ (hence the name).  
 
    \begin{figure}[htb]
\begin{center}
\includegraphics[scale=0.5]{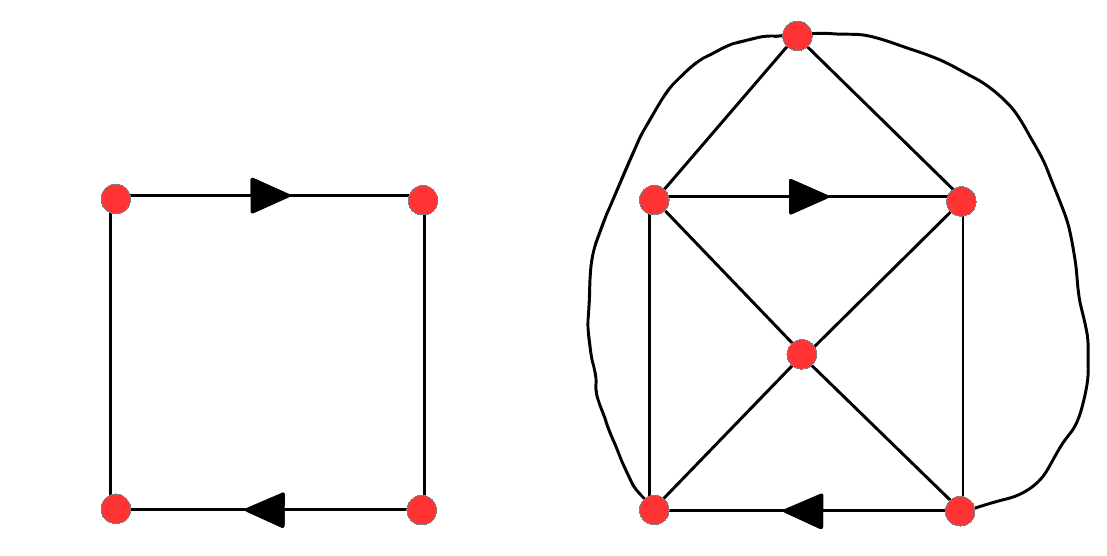}
 \caption{A digraph and its suspension. } \label{figure-suspension}
 \end{center}
\end{figure}

For posets we can refine the construction a bit. Given two posets $\bbP$ and $\bbQ$, their {\em ordinal sum} $\bbP \oplus \bbQ$ is the poset obtained from the disjoint union of $\bbP$ and $\bbQ$ by adding the relations $p \leq q$ for all $p \in P$ and $q \in Q$. An {\em antichain} is a totally disconnected poset, i.e. with no edges except loops. In ordinal sums, we denote the $n$-element antichain simply by $n$.  The {\em poset suspension of $\bbP$} is the poset $\bbP \oplus 2$. It turns out that the geometric realisation of the suspension of $\bbP$ is homeomorphic to the suspension of the geometric realisation of $\bbP$ (hence the name).  

%\bnote{put a figure to illustrate}

\section{A sufficient condition} \label{prelim-theorem} \label{sect-conditions}

In this section we present various necessary and sufficient conditions for a digraph to be S\l upecki. The main result, Theorem \ref{thm-embedding}, will be used in sections \ref{sect-gadgets} and \ref{sect-sums} and is a central tool in \cite{LLP}. 
 
 Let $\bbG$, $\bbH$ be  digraphs. The digraph $Hom(\bbG,\bbH)$ is defined as follows: its vertices are the homomorphisms  $\bbG \rightarrow \bbH$, and we have an arc $(f,g)$ if $(f(x),g(y))$ is an arc of $\bbH$ whenever $(x,y)$ is an arc of $\bbG$. It is easy to verify that  if $f,g,h \in Hom(\bbG,\bbG)$ and $f \rightarrow g$ then $h \circ f \rightarrow h \circ g$ (see Lemma 2.1 of \cite{MR2232298}).

\begin{lemma} \label{lemma-isolated-unary} Let $\bbG$  be a digraph such that the identity is an isolated loop in  $Hom(\bbG,\bbG)$. If $f:\bbG^p \rightarrow \bbG$ is an onto, essentially unary polymorphism then $f$ is an isolated loop in $Hom(\bbG^p,\bbG)$. \end{lemma}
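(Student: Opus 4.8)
Write $f(x_1,\dots,x_p) = g(x_i)$ for some fixed coordinate $i$ and some endomorphism $g:\bbG \to \bbG$. Since $f$ is onto, $g$ must be onto as well, and since $G$ is finite, $g$ is in fact an automorphism of $\bbG$. I want to show that $f$ has no out-neighbour and no in-neighbour other than itself in $Hom(\bbG^p,\bbG)$. So suppose $h:\bbG^p \to \bbG$ satisfies $f \to h$ in $Hom(\bbG^p,\bbG)$; the goal is to conclude $h = f$ (the case $h \to f$ is symmetric, or can be handled the same way).

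**Exploiting the arc in $Hom(\bbG^p,\bbG)$.** The arc $f \to h$ means: whenever $((a_1,b_1),\dots,(a_p,b_p))$ is an arc of $\bbG^p$ — i.e. $(a_j,b_j) \in E$ for all $j$ — we have $(f(a_1,\dots,a_p), h(b_1,\dots,b_p)) \in E$, that is, $(g(a_i), h(b_1,\dots,b_p)) \in E$. The plan is to feed in cleverly chosen arcs of $\bbG^p$. Fix an arbitrary tuple $(b_1,\dots,b_p) \in G^p$. Using reflexivity of $\bbG$, for each $j$ the pair $(b_j,b_j)$ is an arc, so $((b_1,b_1),\dots,(b_p,b_p))$ is an arc of $\bbG^p$, giving $(g(b_i), h(b_1,\dots,b_p)) \in E$. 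More usefully, for any $a$ with $(a,b_i) \in E$, the tuple with $i$-th coordinate $(a,b_i)$ and $j$-th coordinate $(b_j,b_j)$ for $j \neq i$ is an arc of $\bbG^p$, yielding $(g(a), h(b_1,\dots,b_p)) \in E$ for every in-neighbour $a$ of $b_i$ in $\bbG$.

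**Deriving $h = f$ via isolatedness of the identity.** Now I fix all coordinates except the $i$-th: define $h'(x) := h(b_1,\dots,b_{i-1},x,b_{i+1},\dots,b_p)$, an endomorphism of $\bbG$. The computation above shows that $g \to h'$ in $Hom(\bbG,\bbG)$ — indeed, if $(a,b) \in E$ then plugging the arc-tuple with $i$-th coordinate $(a,b)$ and all other coordinates the (reflexive) loop $(b_j,b_j)$ gives $(g(a), h'(b)) \in E$. Then $g^{-1}$ applied on the left preserves arcs in $Hom(\bbG,\bbG)$ (the fact quoted after the definition of $Hom$: $\psi \to \phi$ implies $k\circ \psi \to k \circ \phi$), so $\mathrm{id} = g^{-1}\circ g \to g^{-1}\circ h'$. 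Since the identity is an isolated loop in $Hom(\bbG,\bbG)$, we get $g^{-1} \circ h' = \mathrm{id}$, hence $h' = g$. As the tuple $(b_1,\dots,b_p)$ was arbitrary, this says $h(b_1,\dots,b_p) = g(b_i) = f(b_1,\dots,b_p)$ for all inputs, i.e. $h = f$. The same argument with the roles reversed (using that $g^{-1}$ also preserves arcs, or running $Hom$ arcs backwards) shows any $h$ with $h \to f$ equals $f$. Hence $f$ is an isolated loop in $Hom(\bbG^p,\bbG)$.

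**Main obstacle.** The only delicate point is making sure the arc condition in $Hom(\bbG^p,\bbG)$ is used in the right direction and that composing with the automorphism $g^{-1}$ on the correct side genuinely transports the arc $g \to h'$ to $\mathrm{id} \to g^{-1}\circ h'$; this is exactly the content of Lemma 2.1 of \cite{MR2232298} cited above. One should also double-check the symmetric direction ($h \to f$): here one obtains $h' \to g$ in $Hom(\bbG,\bbG)$, composes with $g^{-1}$ on the left to get $g^{-1}\circ h' \to \mathrm{id}$, and isolatedness of the identity forces $g^{-1}\circ h' = \mathrm{id}$ again. Everything else is routine bookkeeping with reflexivity and finiteness.
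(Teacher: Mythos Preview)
Your proof is correct and follows essentially the same approach as the paper's. The only organizational difference is that the paper first reduces to the case where $f$ is a projection (by composing with $\sigma^{-1}$ upfront), so that the restricted map yields $id \to g$ directly, whereas you carry the automorphism $g$ through and compose with $g^{-1}$ at the end; the core idea---fix all coordinates but the $i$-th using reflexive loops to transport the arc in $Hom(\bbG^p,\bbG)$ down to an arc in $Hom(\bbG,\bbG)$ involving the identity---is identical.
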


\begin{proof} It suffices to show the result for projections: indeed,  if $f$ is an onto essentially unary polymorphism of arity $p$ then $f = \sigma \circ \pi$ for some projection $\pi$ and some automorphism $\sigma$ of $\bbG$. If $f \rightarrow g$ then $\pi \rightarrow \sigma^{-1} \circ g$ so if the result holds for projections then  $ \pi = \sigma^{-1} \circ g$ and $f = g$; similarly if $g \rightarrow f$.

Suppose that $\pi \rightarrow f$ in $Hom(\bbG^p,\bbG)$ where $\pi$ is a projection (the case  $f \rightarrow  \pi$ is identical). Without loss of generality, suppose that $\pi$ is the first projection. 
Fix $a_2,\dots,a_p \in G$ and define $g(x) = f(x,a_2,\dots,a_p)$. Then it is clear that $id \rightarrow g$ in $Hom(\bbG,\bbG)$.  Thus $g = id$ and hence $f$  is the first projection.  

\end{proof}

For $i=1,\dots,p$ let $\pi_i:\bbG^p \rightarrow \bbG$ denote the $i$-th projection.

\begin{lemma} \label{lemma-embedding} Let $\bbG$ be a digraph and let $f:\bbG^p \rightarrow \bbG$ be an onto polymorphism of arity at least 2. Then $(2) \Rightarrow (1)$:  
\begin{enumerate}
\item  there exists an embedding $e:\bbG \hookrightarrow \bbG^p$ such that the restriction of $f$ to $e(\bbG)$ is onto; 
\item  $f$ is essentially unary.
\end{enumerate}
if $\bbG$ is  idempotent trivial  such that the identity is an isolated loop in  $Hom(\bbG,\bbG)$, the converse holds. 
\end{lemma}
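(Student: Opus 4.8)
The direction $(2)\Rightarrow(1)$ is the easy one and I would dispatch it first. If $f$ is essentially unary, say $f(x_1,\dots,x_p)=g(x_i)$ for some homomorphism $g:\bbG\to\bbG$ and some coordinate $i$, then surjectivity of $f$ forces $g$ to be onto; since $\bbG$ is finite, $g$ is then an automorphism. Now define $e:\bbG\hookrightarrow\bbG^p$ by placing $g^{-1}(x)$ — or just $x$ together with a retraction argument — in coordinate $i$ and a constant in the other coordinates; more simply, take $e(x)=(c,\dots,c,x,c,\dots,c)$ with $x$ in the $i$-th slot and any fixed $c\in G$ elsewhere. This is clearly an embedding (it is injective and an induced-subdigraph isomorphism onto a "fibre" of $\bbG^p$ of the form $\{c\}\times\cdots\times\bbG\times\cdots\times\{c\}$), and $f\circ e = g$, which is onto. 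So $(1)$ holds, and this direction needs no hypotheses on $\bbG$ at all.

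For the converse, assume $\bbG$ is idempotent trivial with the identity an isolated loop in $Hom(\bbG,\bbG)$, and let $e:\bbG\hookrightarrow\bbG^p$ be an embedding such that $f|_{e(\bbG)}$ is onto. Write $e(x)=(e_1(x),\dots,e_p(x))$; each $e_j:\bbG\to\bbG$ is an endomorphism. The composite $h:=f\circ e:\bbG\to\bbG$ is an onto endomorphism of a finite digraph, hence an automorphism. Replacing $f$ by $\sigma^{-1}\circ f$ where $\sigma=h$ (this preserves both being onto and being essentially unary, and being a polymorphism), we may assume $f\circ e=\mathrm{id}_{\bbG}$, i.e. $e$ is a section of $f$ and $e(\bbG)$ is a retract of $\bbG^p$ via $f$. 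The plan is now to feed this into the idempotent-triviality of $\bbG$. The obstruction is that $f$ is a polymorphism of $\bbG$, not of $\bbG^p$ sitting over $\bbG$ via $e$; what I want is an honest idempotent polymorphism of $\bbG$ built from $f$ and $e$.

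The key construction: define $F:\bbG^p\to\bbG^p$ by $F = e\circ f$; this is a homomorphism, it is idempotent as a self-map when restricted suitably, and its image is $e(\bbG)\cong\bbG$. More usefully, consider the map $\hat f:\bbG^p\to\bbG$ together with the $p$-tuple of "coordinate reparametrisations": for each choice of indices, precomposing $f$ with $(e\circ\pi_{j_1},\dots,e\circ\pi_{j_p})$-style substitutions produces idempotent $p$-ary polymorphisms of $\bbG$ (since $f\circ e=\mathrm{id}$, plugging the diagonal into such a composite returns $x$). Idempotent triviality then forces each such polymorphism to be a projection; carefully choosing the substitutions — in particular comparing $f(e_1(x_1),\dots,e_p(x_p))$-type expressions — pins down that $f$ depends on a single coordinate. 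The cleanest route is: for a fixed coordinate $i$, set all inputs except the $i$-th to run along $e$ composed with projections in a way that makes an idempotent $p$-ary map of $\bbG$, deduce it is the $i$-th projection, and then use the isolated-identity hypothesis via Lemma \ref{lemma-isolated-unary} to propagate the local conclusion "$f$ is a projection on a sub-box" to "$f$ is globally essentially unary." Concretely, once we know $f\circ e=\mathrm{id}$ and $e(\bbG)$ is a retract, the arc $e\circ\pi_i \to F$ (or a similar comparison) lives in $Hom(\bbG^p,\bbG)$ near an essentially unary polymorphism, and Lemma \ref{lemma-isolated-unary} says such polymorphisms are isolated, giving equality.

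I expect the main obstacle to be exactly this last passage — converting the "pointwise/coordinate-wise" information extracted from idempotent triviality into the global statement that $f$ is essentially unary — and the role of the isolated-identity hypothesis is precisely to close that gap, via Lemma \ref{lemma-isolated-unary}: it guarantees that any onto essentially unary polymorphism is an isolated loop in $Hom(\bbG^p,\bbG)$, so that if we manage to produce an arc from a projection to (a twist of) $f$, rigidity forces them to coincide. I would organise the converse proof as: (i) reduce to $f\circ e=\mathrm{id}$; (ii) for each $i$, build an idempotent $p$-ary polymorphism $g_i$ of $\bbG$ by substituting $e\circ\pi_{\,\cdot}$ into $f$, and invoke idempotent triviality to get $g_i=\pi_{\,\sigma(i)}$ for some coordinate; (iii) show the assignment is consistent, identifying one distinguished coordinate $i_0$ on which $f$ genuinely depends; (iv) use that $f$ agrees with the $i_0$-th projection on the embedded copy and apply Lemma \ref{lemma-isolated-unary} (isolation of essentially unary onto polymorphisms) to conclude $f$ itself is essentially unary on all of $\bbG^p$.
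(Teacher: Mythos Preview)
Your treatment of $(2)\Rightarrow(1)$ is fine and matches the paper exactly: the embedding $e(x)=(c,\dots,c,x,c,\dots,c)$ with $x$ in the relevant coordinate does the job.

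For $(1)\Rightarrow(2)$, your setup is correct and coincides with the paper's: writing $e_j=\pi_j\circ e$, normalising so that $f\circ e=\mathrm{id}$, and forming the idempotent polymorphism $\phi(x_1,\dots,x_p)=f(e_1(x_1),\dots,e_p(x_p))$ which idempotent triviality forces to be a projection, say the first. But your step (iv) is a genuine gap. Knowing $\phi=\pi_1$ only tells you the value of $f$ on the sub-box $e_1(\bbG)\times\cdots\times e_p(\bbG)$, and several of the $e_j$ may fail to be onto, so this sub-box can be a proper subset of $\bbG^p$. Your suggested mechanism --- an ``arc $e\circ\pi_i\to F$'' in $Hom(\bbG^p,\bbG)$ --- does not type-check (both maps land in $\bbG^p$, not $\bbG$), and ``$f$ agrees with a projection on $e(\bbG)$'' is not by itself an arc to or from an essentially unary polymorphism, so Lemma~\ref{lemma-isolated-unary} has nothing to bite on.

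The paper closes the gap as follows. From $\phi=\pi_1$ one first extracts that $e_1$ is an automorphism of $\bbG$ (since $x_1\mapsto f(e_1(x_1),e_2(a),\dots,e_p(a))$ is a bijection factoring through $e_1$). Let $k$ be the largest index with $e_i$ an automorphism. If $k=p$ we are done by direct substitution. Otherwise consider the homomorphism
\[
\psi:\bbG^{p-k}\longrightarrow Hom(\bbG^k,\bbG),\qquad \psi(b_{k+1},\dots,b_p)=(x_1,\dots,x_k)\mapsto f(x_1,\dots,x_k,b_{k+1},\dots,b_p).
\]
Here one needs that $\bbG$ is connected --- a consequence of idempotent triviality that you omit --- so that $\bbG^{p-k}$ is connected and hence $\psi$ has connected image. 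For the particular choice $b_j\in e_j(\bbG)$ the image $\psi(b_{k+1},\dots,b_p)$ is essentially unary and onto (by the identity $\phi=\pi_1$), so by Lemma~\ref{lemma-isolated-unary} it is an isolated loop; connectedness then forces $\psi$ to be constant, i.e.\ $f$ does not depend on its last $p-k$ coordinates, and on the first $k$ it is essentially unary. This $\psi$-argument is the missing idea in your outline.
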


\begin{proof} $(2) \Rightarrow (1)$: suppose $f$ is onto and depends only on its $i$-th variable; then the required embedding is $e:\bbG \hookrightarrow \bbG^p$ defined by $e(x) = (a,a,\dots,a,x,a,\dots,a)$ where $a$ is any fixed vertex in $\bbG$ and $x$ appears in the $i$-th position. \\

Now assume that $\bbG$ is  idempotent trivial and that the identity is an isolated loop in  $Hom(\bbG,\bbG)$: we prove
$(1) \Rightarrow (2)$. First notice that if $\bbG$ is idempotent trivial then it must be connected: one can easily construct a binary idempotent polymorphism which is a non-projection if $\bbG$ is not connected. 
Let $f:\bbG^p \rightarrow \bbG$ and  $e:\bbG \hookrightarrow \bbG^p$ be such that $f(e(\bbG)) = \bbG$.  It follows that $f \circ e = \sigma$ for some automorphism $\sigma$ of $\bbG$. Define an operation $\phi:\bbG^p \rightarrow \bbG$ by 
 $$\phi(x_1,\dots,x_p) =   \sigma^{-1}f(\pi_1(e(x_1)),\dots, \pi_p(e(x_p))).$$
 Then $\phi(x,\dots,x) = \sigma^{-1}f(\pi_1(e(x)),\dots, \pi_p(e(x))) = \sigma^{-1}f(e(x)) = x$ and thus $\phi$ is idempotent. Since $\bbG$ is idempotent trivial  there exists some $i$ such that $\phi(x_1,\dots,x_p)=x_i$ for all $x_1,\dots,x_p$. We may suppose without loss of generality that $i=1$, and thus 
  $$f(\pi_1(e(x_1)),\dots, \pi_p(e(x_p))) =   \sigma(x_1) $$ for all $x_i$; in particular, 
 if we let $\bbH_i = \pi_i(e(\bbG))$ for all $i = 1,\dots,p$,  we conclude that $\bbH_1 = \bbG$. Let $k$ be the largest index $i$ such that $\bbH_i = \bbG$: notice that for each $1 \leq i \leq k$, $\sigma_i = \pi_i \circ e$ is an automorphism of $\bbG$. If $k = p$ then we've shown that $f$ depends only on its first variable. Otherwise, consider  the map $\psi: \bbG^{p-k} \rightarrow Hom(\bbG^k,\bbG)$ where $\psi(b_{k+1},\dots,b_p)$ is the homomorphism $(x_1,\dots,x_k) \mapsto f(x_1,\dots,x_k,b_{k+1},\dots,b_p)$. Clearly $\psi$ is a homomorphism, and since $\bbG^{p-k}$ is connected, so is the image of $\psi$. Choose elements $c_j \in \bbH_j$, $k+1 \leq j \leq p$. Then by the above identity we have that $\psi(c_{k+1},\dots,c_p)(x_1,\dots,x_k) = \sigma (\sigma_1^{-1}(x_1))$; in particular $\tau=\psi(c_{k+1},\dots,c_p)$ is an essentially unary onto polymorphism. 
 By Lemma \ref{lemma-isolated-unary}, it is an isolated loop and hence $\psi$ is a constant map with value $\tau$; in other words, $f(x_1,\dots,x_p) = \sigma (\sigma_1^{-1}(x_1))$ for all $x_j$ and we are done.
\end{proof}

We remark in passing that if the condition that $\bbG$ is idempotent trivial is dropped then the implication $(1) \Rightarrow (2)$ in the previous result is not necessarily true (the construction of a binary non-projection idempotent polymorphism on a disconnected digraph easily shows this.)

%\begin{corollary} \end{corollary}

\begin{theorem}  \label{thm-embedding} Let $n \geq 1$ and let $\bbG$ triangulate an $n$-sphere. If for every $p \geq 2$ and every onto polymorphism $f:\bbG^p \rightarrow \bbG$  there exists an embedding $e:\bbG \hookrightarrow \bbG^p$ such that the restriction of $f$ to $e(\bbG)$ is onto then  $\bbG$ is S\l upecki.
  \end{theorem}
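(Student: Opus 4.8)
The plan is to invoke Lemma~\ref{lemma-embedding} directly, so the only real work is to verify that a digraph $\bbG$ triangulating an $n$-sphere satisfies the two standing hypotheses of the converse direction of that lemma: namely, that $\bbG$ is idempotent trivial, and that the identity is an isolated loop in $Hom(\bbG,\bbG)$. Granting these, the hypothesis of Theorem~\ref{thm-embedding} says precisely that every onto polymorphism $f:\bbG^p\to\bbG$ of arity $p\ge 2$ satisfies condition~(1) of Lemma~\ref{lemma-embedding}, whence by the converse it satisfies~(2), i.e. $f$ is essentially unary; since this holds for all $p\ge 2$, $\bbG$ is S\l upecki by definition.

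So the first step is to recall the homotopy-theoretic sufficient condition for idempotent triviality from \cite{MR2232298}, quoted in the introduction: if for some $m>0$ the $m$-th homotopy group of the space triangulated by $\bbG$ is non-trivial while the $m$-th homotopy group of every proper retract is trivial, and the identity is isolated in $Hom(\bbG,\bbG)$, then $\bbG$ is idempotent trivial. The second step is to check that an $n$-sphere satisfies the retract condition with $m=n$: $\pi_n(S^n)\cong\bbZ\neq 0$, and any proper retract of $\bbG$ triangulates a space that is a retract (up to homotopy) of $S^n$ and is a proper subcomplex, hence has the homotopy type of a proper CW-subcomplex of $S^n$, which is contractible (a proper subcomplex of a triangulated $n$-sphere cannot contain an $n$-cycle in homology, so it is homotopy equivalent to a wedge of lower-dimensional spheres, and in fact for retracts one gets $\pi_n=0$). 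The third step is to verify that the identity is an isolated loop in $Hom(\bbG,\bbG)$ when $\bbG$ triangulates a sphere; this should follow from the fact that a homomorphism adjacent to the identity induces a map on the simplicial complex homotopic to the identity and sufficiently close to it (a "simplicial neighbour") to force it to be the identity, or alternatively from rigidity properties already established for sphere-triangulations in the literature; in any case this is the technical condition the authors flag as "slightly annoying", so I would cite the relevant lemma rather than reprove it.

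The main obstacle I anticipate is precisely making the isolated-loop claim rigorous for \emph{all} digraphs triangulating $n$-spheres, rather than for specific small ones: one needs that if $id \to g$ in $Hom(\bbG,\bbG)$ then $g=id$. The clean way is to observe that $id\to g$ forces $g(x)$ to be a neighbour of $x$ (taking $x=y$), so $g$ is a "nearby" endomorphism; composing, $g$ induces an endomorphism of $K(\bbG)$ that is contiguous to the identity and hence homotopic to it, so $g$ has degree $1$ as a self-map of $S^n$; combined with the structural constraint that $g$ maps each simplex into the star of its image, one deduces $g$ is onto, hence (since $\bbG$ triangulates a sphere and is thus a core) an automorphism, and then a degree-$1$ automorphism contiguous to the identity must be the identity. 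I would package this as a short lemma or, if it is folklore in this setting, cite \cite{MR2232298} or \cite{MR2101222}. Once both hypotheses of Lemma~\ref{lemma-embedding} are in place, the proof of the theorem is a one-line application as described above.
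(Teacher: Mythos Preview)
Your proposal is correct and follows essentially the same route as the paper: reduce to Lemma~\ref{lemma-embedding} by checking that a digraph triangulating an $n$-sphere is idempotent trivial and has the identity isolated in $Hom(\bbG,\bbG)$, then apply the converse direction.

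The only noteworthy difference is in how the two hypotheses are obtained. You treat them separately: you invoke the homotopy criterion (which takes the isolated-identity condition as an \emph{input}) to get idempotent triviality, and then you sketch an independent degree-theoretic argument for the isolated-identity condition. The paper is more economical: it observes that Theorem~2.11 of \cite{MR2232298}, together with Claim~1 inside that same proof, yields \emph{both} conclusions simultaneously from the homotopy data ($\pi_n(S^n)\neq 0$ and proper retracts contractible). So the isolated-identity condition is not a separate hypothesis to be verified but is already part of what the cited result delivers; your degree-$1$/core argument, while plausible, is unnecessary. If you keep your version, note that your parenthetical about proper subcomplexes being ``wedges of lower-dimensional spheres'' is not quite right; the clean statement, which is what the paper uses, is simply that a proper retract of $\bbG$ has contractible realisation.
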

  
  \begin{proof} If $\bbG$ triangulates an $n$-sphere then  the $n$-th homotopy group of the geometric realisation of $\bbG$ is non-trivial, but every proper retract of $\bbG$ has a contractible realisation and hence has trivial homotopy. It follows from Theorem 2.11 of \cite{MR2232298} and Claim 1 in the proof of that same theorem that $\bbG$ is idempotent trivial and that the identity is alone in its connected component of $Hom(\bbG,\bbG)$. We can then invoke Lemma \ref{lemma-embedding} to conclude.  \end{proof}

For completeness' sake, we state an analog of Lemma \ref{lemma-embedding}  when we only consider strong components. Write $f \sim g$ to mean that $f$ and $g$ are in the same strong component of $Hom(\bbG,\bbG)$. 

\begin{lemma} \label{lemma-isolated-unary-strong} Let $\bbG$  be a digraph such that the identity is { alone in its strong component of}  $Hom(\bbG,\bbG)$. If $f:\bbG^n \rightarrow \bbG$ is an onto, essentially unary polymorphism then $f$ is { alone in its strong component of} $Hom(\bbG^n,\bbG)$. \end{lemma}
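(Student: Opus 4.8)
The plan is to follow the proof of Lemma~\ref{lemma-isolated-unary} almost verbatim, replacing ``isolated loop'' by ``alone in its strong component'' and replacing the manipulation of a single arc there by the observation that a suitable restriction operation is a homomorphism of $Hom$-digraphs, hence carries strong components into strong components.

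First I would reduce to the case where $f$ is a projection. If $f$ is onto and essentially unary of arity $n$ then $f=\sigma\circ\pi_i$ for some $1\le i\le n$ and some automorphism $\sigma$ of $\bbG$ (a surjective endomorphism of a finite digraph is an automorphism, exactly as tacitly used in Lemma~\ref{lemma-isolated-unary}). Left composition with $\sigma$ gives a map $g\mapsto\sigma\circ g$ on $Hom(\bbG^n,\bbG)$, and since $\sigma,\sigma^{-1}$ are homomorphisms of $\bbG$ one checks (cf. Lemma~2.1 of \cite{MR2232298}) that $g\to h$ iff $\sigma\circ g\to\sigma\circ h$, so this map is an automorphism of the digraph $Hom(\bbG^n,\bbG)$; similarly, permuting the coordinates of $\bbG^n$ induces an automorphism of $Hom(\bbG^n,\bbG)$. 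Since automorphisms send strong components to strong components, $f$ is alone in its strong component of $Hom(\bbG^n,\bbG)$ if and only if $\pi_1$ is, and it suffices to treat $f=\pi_1$.

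Now, for each fixed tuple $a=(a_2,\dots,a_n)\in G^{n-1}$, I would introduce the restriction map $\rho_a\colon Hom(\bbG^n,\bbG)\to Hom(\bbG,\bbG)$ given by $\rho_a(h)(x)=h(x,a_2,\dots,a_n)$. Because $\bbG$ is reflexive, every arc $(x,y)$ of $\bbG$ yields the arc $\bigl((x,a_2,\dots,a_n),(y,a_2,\dots,a_n)\bigr)$ of $\bbG^n$; this immediately shows both that $\rho_a(h)$ is a homomorphism $\bbG\to\bbG$ for every $h$ and that $h\to h'$ in $Hom(\bbG^n,\bbG)$ implies $\rho_a(h)\to\rho_a(h')$, i.e. that $\rho_a$ is a homomorphism of digraphs, with $\rho_a(\pi_1)=id$. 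Since a homomorphism of digraphs carries directed walks to directed walks, it maps each strong component of its domain into a single strong component of its codomain. Hence if $g$ lies in the strong component of $\pi_1$ in $Hom(\bbG^n,\bbG)$, then $\rho_a(g)$ lies in the strong component of $\rho_a(\pi_1)=id$ in $Hom(\bbG,\bbG)$, which by hypothesis is $\{id\}$; so $g(x,a_2,\dots,a_n)=x$ for all $x\in G$. As $a$ is arbitrary, $g=\pi_1$, which shows $\pi_1$ (hence $f$) is alone in its strong component, completing the proof.

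The argument is routine throughout; the only point demanding minor attention is checking that $\rho_a$, left composition with $\sigma$, and coordinate permutation are genuinely homomorphisms of the relevant $Hom$-digraphs. This is where reflexivity of $\bbG$ enters, and it is the one place where the strong-component version needs the extra observation above rather than the single-arc manipulation that suffices in Lemma~\ref{lemma-isolated-unary}. I do not foresee a real obstacle.
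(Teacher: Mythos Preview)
Your proof is correct and follows essentially the same approach as the paper's own proof: reduce to projections via left composition by $\sigma^{-1}$, then restrict to a slice $x\mapsto(x,a_2,\dots,a_n)$ and observe this carries the strong-component relation in $Hom(\bbG^n,\bbG)$ to that in $Hom(\bbG,\bbG)$. The paper compresses your explicit homomorphism $\rho_a$ into the phrase ``it is easy to see that $id\sim g$,'' but the underlying argument is identical.
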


\begin{proof}  It suffices to show the result for projections: indeed,  if $f$ is an onto essentially unary polymorphism of arity $n$ then $f = \sigma \circ \pi$ for some projection $\pi$ and some automorphism $\sigma$ of $\bbG$. If $f \sim g$ then $\pi \sim \sigma^{-1} \circ g$ so if the result holds for projections then  $ \pi = \sigma^{-1} \circ g$ and $f = g$.

Suppose that $\pi \sim  f$ in $Hom(\bbG^n,\bbG)$ where $\pi$ is a projection. Without loss of generality, suppose that $\pi$ is the first projection. 
Fix $a_2,\dots,a_n \in G$ and define $g(x) = f(x,a_2,\dots,a_n)$. Then it is easy to see  that $id \sim g$ in $Hom(\bbG,\bbG)$.  Thus $g = id$ and hence $f$  is the first projection.  

\end{proof}

Now we can state the analog of Lemma \ref{lemma-embedding}  for strongly connected digraphs: in this case we can weaken the technical condition on the identity.

\begin{lemma} \label{lemma-embedding-strong} Let $\bbG$ be a strongly connected digraph and let $f:\bbG^n \rightarrow \bbG$ be an onto polymorphism of arity at least 2.  If $\bbG$ is  idempotent trivial  such that the identity is alone in its strong component of  $Hom(\bbG,\bbG)$, then (1) implies (2):
\begin{enumerate}
\item  there exists an embedding $e:\bbG \hookrightarrow \bbG^n$ such that the restriction of $f$ to $e(\bbG)$ is onto; 
\item  $f$ is essentially unary.
\end{enumerate}

\end{lemma}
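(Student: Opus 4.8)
The plan is to transcribe the proof of the implication $(1)\Rightarrow(2)$ from Lemma~\ref{lemma-embedding}, systematically replacing ``isolated loop in $Hom(\cdot,\cdot)$'' by ``alone in its strong component of $Hom(\cdot,\cdot)$'' and appealing to Lemma~\ref{lemma-isolated-unary-strong} wherever the original argument invokes Lemma~\ref{lemma-isolated-unary}. Two elementary facts about reflexive digraphs make this work. First, a bijective endomorphism of a finite digraph is an automorphism, since it maps the finite edge set injectively, hence bijectively, onto itself, so its inverse also preserves edges. Second, a finite product of strongly connected reflexive digraphs is again strongly connected: to pass from one tuple to another, move along a directed walk in the first coordinate while idling (via loops) in the remaining coordinates, then repeat coordinate by coordinate; consequently a homomorphic image of a strongly connected digraph is strongly connected. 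Note also that since $\bbG$ is strongly connected it is in particular connected, so the connectivity issue raised at the start of the proof of Lemma~\ref{lemma-embedding} does not arise here.

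Granting these, here is how I would proceed. Let $e:\bbG\hookrightarrow\bbG^n$ witness~(1). Then $f\circ e$ is a surjective endomorphism of the finite digraph $\bbG$, hence an automorphism $\sigma$. Define $\phi(x_1,\dots,x_n)=\sigma^{-1}f\bigl(\pi_1(e(x_1)),\dots,\pi_n(e(x_n))\bigr)$; this is idempotent, so by idempotent triviality it is a projection, say the first. Setting $\bbH_i=\pi_i(e(\bbG))$ and $\sigma_i=\pi_i\circ e$, this yields the identity $f(\sigma_1(x_1),\dots,\sigma_k(x_k),\pi_{k+1}(e(x_{k+1})),\dots,\pi_n(e(x_n)))=\sigma(x_1)$, and in particular $\bbH_1=\bbG$. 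Let $k$ be the largest index with $\bbH_k=\bbG$; then $\sigma_1,\dots,\sigma_k$ are automorphisms. If $k=n$ then $f$ depends only on its first variable and we are done; otherwise consider the homomorphism $\psi:\bbG^{n-k}\to Hom(\bbG^k,\bbG)$ sending $(b_{k+1},\dots,b_n)$ to the map $(x_1,\dots,x_k)\mapsto f(x_1,\dots,x_k,b_{k+1},\dots,b_n)$.

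Choosing $c_j\in\bbH_j$ for $k<j\le n$ and substituting into the displayed identity shows that $\tau:=\psi(c_{k+1},\dots,c_n)$ is the onto, essentially unary polymorphism $(x_1,\dots,x_k)\mapsto\sigma(\sigma_1^{-1}(x_1))$. By Lemma~\ref{lemma-isolated-unary-strong} it is alone in its strong component of $Hom(\bbG^k,\bbG)$. But $\bbG^{n-k}$ is strongly connected, so $\psi(\bbG^{n-k})$ is a strongly connected subdigraph of $Hom(\bbG^k,\bbG)$ containing $\tau$; hence every vertex of $\psi(\bbG^{n-k})$ lies in $\tau$'s strong component, which forces $\psi$ to be constant with value $\tau$. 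Therefore $f(x_1,\dots,x_n)=\sigma(\sigma_1^{-1}(x_1))$ for all $x_j$, and $f$ is essentially unary. The one point that genuinely requires care—and the step I expect to be the crux—is the assertion that products and homomorphic images of strongly connected digraphs remain strongly connected; this is exactly where reflexivity is used essentially, and without it the statement would be false.
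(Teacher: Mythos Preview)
Your proposal is correct and follows exactly the approach of the paper: the paper's proof simply says to repeat the argument of Lemma~\ref{lemma-embedding} verbatim, replacing the appeal to Lemma~\ref{lemma-isolated-unary} by Lemma~\ref{lemma-isolated-unary-strong}, and using the strong connectedness of $\bbG$ (hence of $\bbG^{n-k}$ and of the image of $\psi$) in place of ordinary connectedness. You have faithfully unpacked precisely this, and your added remarks on why products and homomorphic images of strongly connected reflexive digraphs remain strongly connected supply the justification the paper leaves implicit.
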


\begin{proof} The proof is exactly the same as for Lemma \ref{lemma-embedding}, except for two observations: (i) we replace the use of Lemma \ref{lemma-isolated-unary} by that of Lemma \ref{lemma-isolated-unary-strong} and (ii) since $\bbG$ is strongly connected, so is the image of the homomorphism $\psi$; since its image contains a surjective essentially unary polymorphism, this image must be constant by (i). \end{proof}

We close this section with some observations on the conditions used in the previous lemmas. The technical condition that the identity should be an isolated loop is slightly vexing; it can be removed in many instances, but unfortunately not always. 

We say the  digraph $\bbG$  is {\em intransitive} if its simplices all have dimension at most 1, i.e. there are no transitive triples in $\bbG$.

 \begin{lemma}\label{629952} Let $\bbG$ be a poset or symmetric digraph or intransitive digraph. If $\bbG$ is idempotent trivial then the identity is isolated in $Hom(\bbG,\bbG)$.   \end{lemma}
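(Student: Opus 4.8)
\section*{Proof proposal}

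The plan is to prove the contrapositive: if the identity is not isolated in $Hom(\bbG,\bbG)$, then $\bbG$ has a binary idempotent polymorphism that is not a projection, so $\bbG$ is not $2$-idempotent trivial and hence not idempotent trivial. So suppose some endomorphism $g\ne\mathrm{id}$ satisfies $\mathrm{id}\to g$ or $g\to\mathrm{id}$ in $Hom(\bbG,\bbG)$. Reversing every arc of $\bbG$ exchanges polymorphisms of $\bbG$ with polymorphisms of its reverse while preserving idempotence and non-projectivity, and preserves each of the three hypotheses (the dual of a poset is a poset, a symmetric digraph is self-dual, and a transitive triple of the reversed digraph is a transitive triple of $\bbG$), so I may assume $\mathrm{id}\to g$, i.e. $(x,y)\in E\Rightarrow(x,g(y))\in E$. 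Setting $y=x$ gives $x\to g(x)$ for all $x$, and applying the implication repeatedly to the arc $(x,g^{k}(y))$ yields $\mathrm{id}\to g^{k}$ for all $k\ge 1$. I may also assume $\bbG$ is connected with $|G|\ge 2$, and that $\bbG$ is not a complete reflexive digraph (on such a digraph every operation is a polymorphism, so a $\min$ operation is an idempotent non-projection and we are done).

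The workhorse is the following. If $h$ is any endomorphism with $\mathrm{id}\to h$ and $U\subseteq G$ is \emph{successor-closed} (i.e. $u\in U$ and $u\to v$ imply $v\in U$), then
\[f(x,y)=\begin{cases}h(x)&\text{if }y\in U,\\ x&\text{if }y\notin U,\end{cases}\]
is a polymorphism of $\bbG$: for an arc $((x_1,y_1),(x_2,y_2))$ of $\bbG^{2}$ the only delicate case is $y_1\notin U$, $y_2\in U$ (the reverse is impossible since $y_1\to y_2$ and $U$ is successor-closed), where we need $x_1\to h(x_2)$, which is $\mathrm{id}\to h$ applied to $x_1\to x_2$. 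This $f$ is idempotent exactly when $U\subseteq\mathrm{Fix}(h)$, and if moreover $h\ne\mathrm{id}$ and $\emptyset\ne U\subsetneq G$ then $f$ is neither $\pi_1$ (choose $x$ with $h(x)\ne x$ and $u\in U$: $f(x,u)=h(x)\ne x$) nor $\pi_2$ (choose $y\notin U$ and $x\ne y$: $f(x,y)=x\ne y$).

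For a poset $\bbG$ this finishes things at once: take $U=\{m\}$ for a maximal element $m$. It is up-closed, hence successor-closed; $g(m)\ge m$ forces $g(m)=m$, so $U\subseteq\mathrm{Fix}(g)$; and $U$ is proper and nonempty, so the workhorse with $h=g$ produces the non-projection. For a symmetric $\bbG$ the workhorse is vacuous (a connected symmetric digraph has no proper nonempty successor-closed set), so I use a different operation. Symmetry first upgrades $\mathrm{id}\to g$ to $g\to\mathrm{id}$: from $(x,y)\in E$ we get $(y,x)\in E$, then $(y,g(x))\in E$, then $(g(x),y)\in E$. Now let $f(x,y)=g(x)$ when $x\sim y$ and $x\ne y$, and $f(x,y)=x$ otherwise; it is idempotent, it is a polymorphism because the two mixed cases for an arc of $\bbG^{2}$ are covered by $g\to\mathrm{id}$ and by $\mathrm{id}\to g$ respectively, and it is a non-projection because $g$ moves some vertex (which has a neighbour, $\bbG$ being connected with $|G|\ge 2$) and because $\bbG$, not being complete, has a pair of distinct non-adjacent vertices.

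The intransitive case is the hard one and is where I expect the main obstacle. Having no transitive triple forces $\mathrm{id}\to g$ to carry strong local structure: for each arc $x\to y$ with $x\ne y$, the vertices $x,y,g(y)$ cannot be distinct (they would form a transitive triple via $x\to y\to g(y)$ and $x\to g(y)$), so $g(y)\in\{y,x\}$; hence every vertex $y$ with $g(y)\ne y$ has $g(y)$ as its only non-loop in-neighbour and satisfies $y\leftrightarrow g(y)$. I would then split on whether $\mathrm{Fix}(g)$ contains an entire terminal strong component $C$ of $\bbG$ (equivalently, a nonempty successor-closed subset). If it does, such a $C$ is automatically proper ($C=G$ would force $\bbG$ strongly connected and $g=\mathrm{id}$), and the workhorse with $U=C$, $h=g$ finishes. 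If it does not, then every terminal strong component contains a non-fixed vertex $z$; it is not a single sink, so it has size at least $2$, $z$ has an in-neighbour $w\ne z$ in it, and necessarily $w=g(z)$, producing a two-cycle $a\leftrightarrow b$ with $g(a)=b$ (where $a=z$, $b=g(z)$). In this last subcase I would build the non-projection by hand — for instance $f(x,y)=x$ if $x=y$, $f(x,y)=a$ if $\{x,y\}=\{a,b\}$, and $f(x,y)=b$ otherwise, mimicking a $\min$ on $\{a,b\}$ together with the constant $b$ — and check it is a polymorphism; intransitivity is precisely what excludes the configurations (uncontrolled out-neighbourhoods of $a$ and $b$, bad common out-neighbours) that would otherwise violate the arc condition. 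Carrying out this final verification in full generality is the crux of the argument.
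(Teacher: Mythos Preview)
Your poset and symmetric arguments are correct and pleasantly different from the paper's. The paper, for posets, reduces to an $f$ differing from $\mathrm{id}$ in a single place and builds a polymorphism from the unique-upper-cover structure; your ``workhorse'' with $U=\{m\}$ a maximal element is cleaner. For symmetric digraphs the paper invokes an external lemma to get a fixed point of $g$ and then uses a one-point trigger; your observation that symmetry automatically upgrades $\mathrm{id}\to g$ to $g\to\mathrm{id}$, together with the adjacency-triggered map, avoids that citation. The first subcase of your intransitive split (a terminal strong component inside $\mathrm{Fix}(g)$) is also fine via the workhorse.

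The gap is exactly where you flagged it: the second intransitive subcase. Your proposed $f$ --- diagonal on the diagonal, value $a$ on $\{a,b\}$, and constant $b$ elsewhere --- is not a polymorphism in general. Take $G=\{a,b,c,d\}$ with arcs $a\leftrightarrow b$, $b\to c$, $c\to d$, $d\to b$ (plus loops). This is intransitive, and $g$ fixing $b,c,d$ and sending $a\mapsto b$ satisfies $\mathrm{id}\to g$. The graph is one strong component, so $\mathrm{Fix}(g)=\{b,c,d\}$ contains no successor-closed set and you are forced into your second subcase with this very $a,b$. But $(c,c)\to(d,c)$ is an arc of $\bbG^2$, while $f(c,c)=c$ and $f(d,c)=b$, and $c\not\to b$. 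The obstruction is generic: the case ``diagonal $\to$ default'' of your $f$ requires every vertex with two distinct out-neighbours (not equal to $\{a,b\}$) to see $b$, and nothing in your setup forces that.

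The paper's intransitive argument avoids this by not trying to manufacture a polymorphism from a two-cycle at all. It analyses $f$ edge by edge: intransitivity forces $f(x)\in\{x,y\}$ for every arc $x\to y$, and then a short case analysis shows that on any edge $f$ either fixes both endpoints or reveals a pendant vertex (where an explicit polymorphism is easy), or swaps a symmetric edge whose endpoints have no other neighbours (forcing $\bbG$ to be a single edge). The upshot is that $f=\mathrm{id}$ unless one of these degenerate local configurations occurs. Your structural observation that a moved vertex has $g$-image as its unique in-neighbour is the first step of this; pushing it further in the paper's style, rather than jumping to a global constant-$b$ map, is what closes the argument.
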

 
 As we argued earlier, we can assume that $\bbG$ is connected if it is idempotent trivial. 

\begin{proof}  (1) Suppose $\bbG$ is a poset, and $id$ is not alone in its connected component. Then (see the comment following Lemma 2.10 in \cite{DBLP:journals/dm/MarotiZ12}) without loss of generality $id < f$ where $f$ differs from $id$ only in one place, say $f(a)=b$; it is easy to see that in this case $b$ is a unique upper cover of $a$ in $\bbG$ (i.e. if $a<c$ then $b \leq c$); then define $\phi(x,y) = y$ if $x \leq a$ and $\phi(x,y)= f(y)$ otherwise. It is easy to see that $\phi$ is an idempotent polymorphism, and that it is not a projection. %(idea: to make sure $f$ is idempotent, it suffices not to map $a$ to $f$ in the map from $\bbG$ to $Hom(\bbG,\bbG)$.)   

(2) Suppose that $\bbG$ is a symmetric digraph, and that $id$ is not alone in its component, so without loss of generality let $id \rightarrow f$. By Lemma 2.2 of \cite{DBLP:journals/dm/MarotiZ12}, we can assume that $f$ has at least one fixed point. Then define $\phi(x,y)= f(y)$ if $x=a$ and $\phi(x,y)=y$ otherwise. It is easy to see that $\phi$ is an idempotent polymorphism and that it is not a projection. 
%(Idea: the pair $\{id,f\}$ is a clique so we can map $\bbG$ to this clique in any way we wish; but to make sure $\phi$ is idempotent  we must make sure that only fixed %points of $f$ are mapped to it. )  

(3) Suppose that $\bbG$ is an intransitive digraph.  Suppose that the identity is not alone in its component, so without loss of generality let $f\rightarrow id$. Let $x,y \in G$ be distinct such that $x\rightarrow y$. Then $\{f(x),x,y\}$ is a simplex, and hence $f(x) \in \{x,y\}$. Similarly, $\{f(x),f(y),y\}$ is a simplex. We want to show that $(f(x),f(y)) \in \{(x,y),(y,x)\}$, and that $(f(x),f(y)) = (x,y)$ in the case the edge is non-symmetric.

\noindent{Case 1.} Suppose that $\{x,y\}$ is not a symmetric edge. Then $f(x)=x$ and $f(y) \in \{x,y\}$. If $f(y)=y$ we are done; otherwise $f(y)=x$; we show that $y$ has a unique in-neighbour (namely $x$) and no out-neighbour. First suppose $y\rightarrow w$. Then $x=f(y)\rightarrow w$ means that $\{x,y,w\}$ is transitive so $y=w$. Now suppose that $z \rightarrow y$. We now know this edge in non-symmetric, so by our first observation $f(z) = z$. Then $z=f(z)\rightarrow f(y)=x$ implies that $\{z,x,y\}$ is a transitive triple, and hence $z \in \{x,y\}$. 

Define a map $\phi:G^2\rightarrow G$ by  

$$
\phi(u,v)=
\begin{cases}
x, & \text{if }v=y \text{ and } u \neq y,  \\
v, & \text{otherwise.}
\end{cases}
$$

It is straightforward to verify that $\phi$ is an idempotent polymorphism of $\bbG$ and is not a projection, contradicting our hypothesis. Hence $f(x)=x$ and $f(y)=y$ if the edge is non-symmetric. 

\noindent{Case 2.} Now assume the edge is symmetric and that $f(x)=x$. One can argue as above that if $f(y)\neq y$ then $y$ is a pendant vertex (with symmetric edge), and mimicking Case 1, we can define a binary idempotent polymorphism which is not a projection. We conclude that $f(y)=y$. 
%using the retraction of $\bbG$ onto $\bbG \setminus \{y\}$, which is a symmetric neighbour of $id$. 
Now if $f(x)=y$, by our previous argument (exchanging the roles of $x$ and $y$) we must have $f(y)\neq y$ and hence $f(y)=x$. 

So we conclude that for every non-symmetric edge $x\rightarrow y$ we have $f(x)=x$ and $f(y)=y$, and for a symmetric edge $(x,y)$ we have $f(x)=x$ and $f(y)=y$ or $f(x)=y$ and $f(y)=x$. However, suppose that $(x,y)$ is such a flipped edge. Then it is easy to see that neither $x$ nor $y$ can have any other neighbour (symmetric or not), so $\bbG$ is an edge. Since the edge admits all polymorphisms it is not idempotent trivial. Consequently, $f$ fixes all vertices that are incident to some arc, and hence the identity is alone in its connected component.

 \end{proof}

A {\em pre-order} is a reflexive, antisymmetric relation. If $A$ and $B$ are strong components of the digraph $\bbG$, define $A \sqsubseteq B$ if there exist $a \in A$ and $b \in B$ such that $a \rightarrow b$. It is immediate that this  defines a pre-order on the set of strong components of $\bbG$.  An element $a$ of a pre-order $\bbK$ is {\em minimal} if $b \rightarrow a$ implies $a=b$ for all $b \in K$; maximal elements are defined dually. 

\begin{lemma} \label{lemma-not-strong-id-alone}Let $\bbG$ be a S\l upecki digraph which is not strongly connected. Then the identity is alone in its connected component of $Hom(\bbG,\bbG)$. \end{lemma}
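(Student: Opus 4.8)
The plan is to argue by contradiction: assuming the identity is \emph{not} alone in its connected component of $Hom(\bbG,\bbG)$, I will exhibit a surjective binary polymorphism of $\bbG$ that is not essentially unary, contradicting the S\l upecki hypothesis. For the setup, recall that a S\l upecki digraph is idempotent trivial, hence connected (as in the proof of Lemma \ref{lemma-embedding}); since $\bbG$ is not strongly connected it has at least two strong components, so every strong component is a proper non-empty subset of $G$ and $|G|\ge 2$. From the assumption, walking out from $id$ along a walk in $Hom(\bbG,\bbG)$ to the first vertex distinct from $id$ produces an endomorphism $g\ne id$ with either $id\to g$ or $g\to id$ in $Hom(\bbG,\bbG)$.

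Consider first the case $id\to g$. Using the pre-order $\sqsubseteq$ on the (finitely many) strong components, choose a \emph{maximal} strong component $C$; maximality means no arc of $\bbG$ leaves $C$, i.e.\ $x\to y$ with $x\in C$ forces $y\in C$. Define $\phi:\bbG^2\to\bbG$ by $\phi(x,y)=g(y)$ when $x\in C$ and $\phi(x,y)=y$ otherwise. To check $\phi$ is a polymorphism, take arcs $x_1\to x_2$ and $y_1\to y_2$ and split on where $x_1,x_2$ sit: if both lie in $C$, the needed arc is $g(y_1)\to g(y_2)$, valid since $g$ is a homomorphism; if neither lies in $C$, it is $y_1\to y_2$; the subcase $x_1\in C,\ x_2\notin C$ cannot occur by maximality of $C$; and the subcase $x_1\notin C,\ x_2\in C$ asks for $y_1\to g(y_2)$, which holds precisely because $id\to g$. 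Fixing $x_0\in G\setminus C$ gives $\phi(x_0,\cdot)=id$, so $\phi$ is onto and genuinely depends on its second coordinate; hence if $\phi$ were essentially unary it would have to be of the form $(x,y)\mapsto e(y)$ for some endomorphism $e$, but $x_0\notin C$ forces $e=id$ while any $x_1\in C$ forces $e=g$, contradicting $g\ne id$.

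The case $g\to id$ is handled symmetrically: choose a \emph{minimal} strong component $C$ — so no arc of $\bbG$ enters $C$ from outside — and again put $\phi(x,y)=g(y)$ if $x\in C$ and $\phi(x,y)=y$ otherwise. The only crossing subcase that now survives is $x_1\in C,\ x_2\notin C$, where the required arc $g(y_1)\to y_2$ follows from $g\to id$, while the subcase $x_1\notin C,\ x_2\in C$ is excluded by minimality of $C$. Surjectivity and the failure of essential unarity are verified verbatim as above. In either case this contradicts $\bbG$ being S\l upecki, so the identity is alone in its connected component of $Hom(\bbG,\bbG)$.

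I do not expect a real obstacle here; the construction is a short bookkeeping argument. The one point that needs care is the coupling between the direction of the walk edge at $id$ and the choice of extremal strong component — matching $id\to g$ with a \emph{maximal} $C$ and $g\to id$ with a \emph{minimal} $C$ — so that the unavoidable direction of the arcs running between $C$ and $G\setminus C$ is exactly the one for which the defining relation of $g$ supplies the missing edge of $\phi$.
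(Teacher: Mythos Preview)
Your proof is correct and follows essentially the same approach as the paper's: both construct a binary surjective polymorphism by splitting the first coordinate according to an extremal strong component and applying $id$ on one side and $g$ on the other. The only cosmetic difference is the pairing---for the case $id\to g$ the paper picks a \emph{minimal} component $A$ and sets $f(x,y)=y$ for $x\in A$, $f(x,y)=g(y)$ otherwise, whereas you pick a \emph{maximal} component and swap where $g$ and $id$ are applied; the verifications are otherwise identical.
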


\begin{proof} Suppose that $id \rightarrow r$ where $r \neq id$ (the case $r \rightarrow id$ is identical.)
Since $\bbG$ is not strongly connected, its strong components form a pre-order. Choose a strong component $A$ which is minimal in the pre-order, and define  a binary polymorphism $f$ as follows: $f(x,y) = y$ if $x \in A$ and $f(x,y)=r(y)$ otherwise. It is obviously onto, and it is easy to verify that it is a polymorphism. Let $a \in A$, $b \not\in A$ and let $z \neq r(z)$. Then $f(a,z) = z$ and $f(b,z)=r(z)$ so $f$ depends on its first variable. And since $f(a,y) = y $ for all $y$ $f$ depends on its second variable. 
\end{proof}

\begin{comment}
{\bf Remark.} In the previous proof, the operation that we construct is idempotent if the image $R$ of $r$ is disjoint from $A$ (notice that by Lemma 2.2 of M-Z we can assume that $r$ is a retraction). Can we always choose $A$ in that way ? This would answer the second question below. {\bf answer}: No, not for a fixed $r$. Consider the digraph obtained from Lemma \ref{lemma-example} below, adding a sink $w$. The retraction $r$ onto $\{1,2,3,w\}$ satisfies $id \rightarrow r$, but there is only one minimal $A$ (namely, the digraph without the sink) and $r$ intersects it. However, we also have $id \rightarrow f$ where $f$ is the constant map to $w$, and then we can define the required idempotent. 
\end{comment}

One may ask whether the conditions above can be relaxed. The following example shows that the S\l upecki condition does {\em not} imply the identity is alone in its connected component.

\begin{lemma} \label{lemma-example} There exists a reflexive digraph $\bbG$ with the following properties:
\begin{itemize}
\item $\bbG$ is S\l upecki;
\item $\bbG$ is strongly connected; 
\item there are arcs $s \rightarrow id \rightarrow r$ in $Hom(\bbG,\bbG)$ such that $r$ and $s$ are retractions  onto the same subdigraph of size $|G|-1$;
\item the connected component of the identity is exactly $\{id,r,s\}$, in particular, the identity is alone in its strong component. 
\end{itemize} 
\end{lemma}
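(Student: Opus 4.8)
The plan is to exhibit an explicit four‑vertex digraph and to read off all four properties from a finite computation of its endomorphism monoid. Let $\bbG$ be the digraph on $\{0,1,2,3\}$ whose arcs, besides the loops, are $0\to 1$, $1\to 2$, $2\to 3$, $3\to 0$ and $3\to 1$; equivalently, $\bbG$ is the directed $3$-cycle $\bbH$ on $\{1,2,3\}$ together with an extra vertex $0$ lying on a directed path $3\to 0\to 1$ parallel to the arc $3\to 1$. Then $\bbG$ is strongly connected, and the two maps $r$ and $s$ that fix $\{1,2,3\}$ pointwise and send $0$ to $1$, respectively to $3$, are easily seen to be homomorphisms, hence retractions onto the subdigraph $\bbH$, which has size $|G|-1$.

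The heart of the matter is a complete description of $Hom(\bbG,\bbG)$. Its elements are the identity, the four constant maps $c_0,\dots,c_3$, and six endomorphisms with image $\bbH$ (one for each choice of an automorphism of $\bbH$ together with an admissible value for $0$). From this I would extract two facts. First, the only surjective endomorphism is $id$: an automorphism of $\bbG$ must fix the unique vertex of in‑degree three, namely $1$, and the unique vertex of out‑degree three, namely $3$, and is therefore the identity, while every surjective endomorphism of a finite digraph is an automorphism (some power of a bijection of a finite set is the identity). Second, $id$ and $c_0$ are the only endomorphisms whose image contains the vertex $0$: each two- or three-element induced subdigraph of $\bbG$ containing $0$ is a directed path or a three-chain, and none of these receives a homomorphism from the directed $4$-cycle inside $\bbG$, so none can be the image of an endomorphism. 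Working directly from the definition of the arcs of $Hom(\bbG,\bbG)$ one then checks that $id\to g$ forces $g\in\{id,r\}$, that $g\to id$ forces $g\in\{id,s\}$, that $g\to r$ forces $g\in\{id,r,s\}$ while $r\to g$ forces $g=r$, dually for $s$, and that no arc joins a constant map to a non-constant map. Hence the connected component of $id$ in $Hom(\bbG,\bbG)$ is exactly $\{id,r,s\}$, with arcs $s\to id\to r$ and $s\to r$ and loops; in particular $id$ is alone in its strong component, the constant maps form a connected component on their own, and $r,s$ are retractions onto the same subdigraph of size $|G|-1$. This settles properties two through four.

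It remains to prove that $\bbG$ is S\l upecki. Let $f\colon \bbG^{k}\to\bbG$ with $k\ge 2$ be a surjective polymorphism; I claim $f$ is a projection. Fix a coordinate $i$ and let $g_i\colon \bbG^{k-1}\to Hom(\bbG,\bbG)$ be the homomorphism sending a tuple $\vec a$ to the endomorphism obtained from $f$ by fixing every coordinate other than the $i$-th according to $\vec a$. Since $\bbG$ is reflexive and strongly connected, $\bbG^{k-1}$ is strongly connected (pad shorter directed paths with loops), so the image of $g_i$ is a strongly connected induced subdigraph of $Hom(\bbG,\bbG)$ and hence lies in a single connected component. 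Also the image of $f$ is the union of the images of the endomorphisms $g_i(\vec a)$, and it contains $0$, so by the second fact above some $g_i(\vec a)$ equals $id$ or $c_0$; since these lie in different components, exactly one of the two possibilities occurs along $g_i$. If $g_i$ meets $id$, its image is a strongly connected subdigraph of $\{id,r,s\}$ containing $id$, hence equals $\{id\}$ because $\{id,r,s\}$ is acyclic apart from loops; then $g_i$ is constant and $f=\pi_i$. If instead $g_i$ meets $c_0$, its image consists of constant maps, so $g_i(\vec a)$ is constant for every $\vec a$ and $f$ does not depend on its $i$-th coordinate. The second alternative cannot hold for all $i$, for then $f$ would be constant and not surjective; hence $f=\pi_i$ for some $i$. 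Therefore every surjective polymorphism of $\bbG$ of arity at least two is a projection, a fortiori essentially unary, and $\bbG$ is S\l upecki.

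The only delicate point is the bookkeeping of $Hom(\bbG,\bbG)$: producing the correct list of endomorphisms, checking that the vertex $0$ lies only in the images of $id$ and $c_0$, and verifying that the component of $id$ is $\{id,r,s\}$ with the stated acyclic arc pattern. Once these are established, the two dichotomies that drive the S\l upecki argument follow immediately, so this is the main obstacle.
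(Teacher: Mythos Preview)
Your proof is correct and takes a genuinely different route from the paper's. The paper proves that $\bbG$ is S\l upecki by reducing to the directed $3$-cycle $\bbH$ on $\{1,2,3\}$: through a chain of claims it shows that any surjective $f:\bbG^n\to\bbG$ satisfies $f(H^n)=H$, invokes the (already established) S\l upeckiness of $\bbH$ to conclude that $f$ restricted to $H^n$ is essentially unary, and then extends this to all of $\bbG^n$ by a short edge-chasing argument; for the component of the identity it composes with $r,s$ and restricts to $\bbH$, using that $id_{\bbH}$ is isolated in $Hom(\bbH,\bbH)$. You instead carry out a complete analysis of $Hom(\bbG,\bbG)$ and extract two structural facts---only $id$ and $c_0$ hit the vertex~$0$, and their respective strong components are $\{id\}$ and the set of constant maps---after which the currying map $g_i$ forces a clean dichotomy coordinate by coordinate. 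Your bookkeeping claims (the eleven endomorphisms, the arc pattern on $\{id,r,s\}$, and the isolation of the constants) all check out. Your approach is self-contained and avoids any appeal to the S\l upeckiness of $\bbH$; the paper's approach is more modular and would transfer more readily to analogous constructions built over a larger S\l upecki core.
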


   \begin{figure}[htb]
\begin{center}
\includegraphics[scale=0.5]{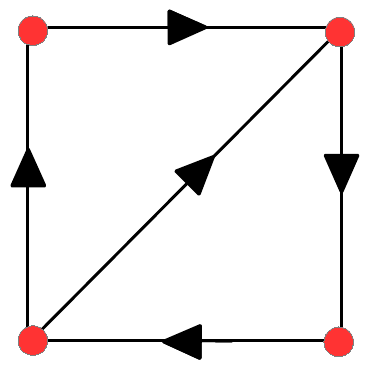}
 \caption{A S\l upecki digraph where the identity is not isolated. }
 \end{center}
\end{figure}

\begin{proof} Let $\bbG$ be the reflexive digraph with vertices $\{0,1,2,3\}$ and arcs (other than loops) $\{(0,1),(1,2),(2,3),(3,0),(3,1)\}$. It is easy to verify that there are two retractions of $\bbG$ onto the subdigraph $\bbH$ induced by $H=\{1,2,3\}$, one sending 0 to 1 (call it $r$) the other sending 0 to 3 (call it $s$); we have that $s \rightarrow id \rightarrow r$, and no other arcs between these homomorphisms. 

Now we show that $\bbG$ is S\l upecki; let $f:\bbG^n \rightarrow \bbG$ be an $n$-ary, surjective polymorphism, where $n \geq 2$. Let $\Delta$ denote the diagonal of $\bbG^n$.  It is immediate that the only strongly connected subdigraphs of $\bbG$ are singletons, $\bbH$ and $\bbG$. Notice that $\bbH$ is a directed cycle and hence  is S\l upecki by Proposition \ref{prop-directed-cycles}.  \\

\noindent{\bf Claim 1.} $f(\Delta)$ contains $H$.   

Indeed, since $\Delta$ is strongly connected, so is its image under $f$. By the last remark, it thus suffices to prove that $f$ is not constant on $\Delta$; suppose for a contradiction that $f(\Delta) = \{a\}$. If $u \rightarrow v$ in $\bbG$, then $(u,\dots,u) \rightarrow \overline{x} \rightarrow (v,\dots,v)$ for all $\overline{x} \in \{u,v\}^n$;   applying $f$, and since $\bbG$ contains no symmetric edges, we conclude that $f(\overline{x})=a$. If $\overline{x} \in \{0,2\}^n$, notice that $\overline{a} \rightarrow \overline{x} \rightarrow \overline{b}$ where $\overline{a}$ is obtained from $\overline{x}$ by replacing all 2's by 1's and $\overline{b}$ is obtained from $\overline{x}$ by replacing all 0's by 1's; applying $f$ we obtain again that $f(\overline{x})=a$. Hence $f$ has value $a$ on any tuple with at most two distinct coordinates. Now if $u \rightarrow v \rightarrow w$ in $\bbG$, we have that $\overline{a} \rightarrow \overline{x} \rightarrow \overline{b}$ where $\overline{a}$ is obtained from $\overline{x}$ by replacing all v's by u's and $\overline{b}$ is obtained from $\overline{x}$  by replacing all v's by w's, and this for all $\overline{x} \in \{u,v,w\}^n$. Since every 3-element subset of $G$ has this form, we conclude, as before, that $f$ has value $a$ on any tuple with at most three distinct coordinates. It is easy to see that an analogous argument will show that $f$ must also have value $a$ on all tuples with 4 distinct coordinates,  a contradiction since $f$ is onto. \\

\noindent{\bf Claim 2.} Let $\Delta' = \Delta \cap H^n$. Then $f(\Delta')=H$.  

By Claim 1, we certainly have that $|f(\Delta')| \geq 2$ and so it is not a singleton; since $\Delta'$ is strongly connected, so is $f(\Delta')$ and thus it must contain $H$ and we are done.\\

\noindent{\bf Claim 3.} $f(H^n)=H$.

Otherwise, by Claim 2, we have some $\overline{x} \in H^n$ such that $f(\overline{x})=0$. Let $\overline{y} \in H^n$ such that $f(\overline{y})=1$ (it exists by Claim 2). Now clearly we have the following: for any $u,w \in \bbH$ there exists $v \in \bbH$ such that $u \rightarrow v \rightarrow w$, and hence the same property holds in $\bbH^n$.   Then there exists $v$ such that $\overline{y} \rightarrow v \rightarrow \overline{x}$; but then $1\rightarrow f(v)\rightarrow 0$, a contradiction. \\

By Claim 3, and since $\bbH$ is S\l upecki, we conclude that there exists an automorphism $g:\bbH \rightarrow \bbH$, such that, without loss of generality, $f(x_1,\dots,x_n) = g(x_1)$ for  all $x_i \in H$. \\

\noindent{\bf Claim 4.} Let $\overline{x}$ be a tuple such that $x_1 \neq 0$. Then $f(\overline{x}) = g(x_1)$. 

We have $\overline{a} \rightarrow \overline{x} \rightarrow \overline{b}$ where $\overline{a}$ is obtained from $\overline{x}$ by replacing all 0's by 3's and $\overline{b}$ is obtained from $\overline{x}$ by replacing all 0's by 1's; applying $f$ we obtain that $f(\overline{x}) = f(\overline{a})=f(\overline{b}) = g(x_1)$ (since $\bbG$ has no symmetric edges). \\

Now take {\em any} $\overline{x} \in \{0\} \times \bbG^{n-1}$; then $\overline{a} \rightarrow \overline{x} \rightarrow \overline{b}$ where $\overline{a}$ is obtained from $\overline{x}$ by replacing the first coordinate by 3 and $\overline{b}$ is obtained from $\overline{x}$ by replacing the first coordinate by 1; applying $f$ we obtain using Claim 4 that $g(3) \rightarrow f(\overline{x}) \rightarrow g(1)$. It follows that there exists a value $a \in \bbH$ such that $f(\overline{x}) = a$ for all $\overline{x} \in \{0\} \times \bbG^{n-1}$ (this value $a$ depends on the automorphism $g$). In particular, the image under $f$ of the strongly connected digraph $\{0\} \times \bbG^{n-1}$ must be a singleton, and thus $f$ depends only on its first variable. \\

Finally we show that the component of the identity is $\{id,r,s\}$. Consider the maps from $Hom(\bbG,\bbG)$ to $Hom(\bbH,\bbH)$ sending $f$ to $(r \circ f)|_\bbH$ and $(s \circ f)|_\bbH$ respectively. These are clearly homomorphisms, and thus must map the entire component of $id_\bbG$ to the component of $id_\bbH$ which   by Lemma \ref{lemma-not-strong-id-alone} is $\{id_\bbH\}$. A simple verification shows that this forces any member $f$ of the component of $id_\bbG$ to be the identity when restricted to $\bbH$, which forces $f \in \{id,r,s\}$.

\end{proof}

 \section{Some S\l upecki Spheres} \label{sect-gadgets}
 
 As we remarked earlier, digraphs that triangulate spheres are known to be idempotent trivial, and cycles of girth at least 4 are in fact S\l upecki. In the next section we will provide examples of digraphs that triangulate 2-spheres that are {\em not} S\l upecki; in the present section we provide examples of digraphs triangulating spheres that are S\l upecki, using a technique involving gadgets. We also state for the record the result that digraphs that triangulate 1-spheres are S\l upecki.

An $n$-ary operation $f$ on a set $A$ {\em preserves} the $k$-ary relation $\theta \subseteq A^k$ if applying $f$ to the rows of any $k \times n$ matrix whose columns are tuples in $\theta$ yields a tuple in $\theta$. 
 
 \begin{lemma}[\cite{MR2254622}]\label{lemma-slupecki} Let $A$ be an $n$-element set, $n \geq 2$. Then the operations on $A$ preserving the relation
 $$\theta = \{(a_1,\dots,a_n): |\{a_1,\dots,a_n\}| < n \}$$
 are precisely the essentially unary operations and the non-surjective operations on $A$.  \end{lemma}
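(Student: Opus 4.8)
The plan is to establish the two inclusions separately. One inclusion is an immediate computation: if $f$ is an $m$-ary operation on $A$ that is not surjective, then applying $f$ row-wise to any $n\times m$ matrix produces a tuple all of whose entries lie in the image of $f$, hence with at most $n-1$ distinct values, so it belongs to $\theta$; and if $f(x_1,\dots,x_m)=g(x_i)$ is essentially unary, then applying it row-wise to a matrix with columns $c_1,\dots,c_m\in\theta$ yields the tuple $(g(c_i[1]),\dots,g(c_i[n]))$, whose number of distinct entries is at most that of $c_i$, hence at most $n-1$, so it again belongs to $\theta$. (Only $n\geq 2$ is needed here.)

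For the reverse inclusion, let $f:A^m\to A$ be surjective and preserve $\theta$; the goal is to show that $f$ is essentially unary. First I would reformulate $\theta$-preservation. Call $s:A\to A^m$ a \emph{section} of $f$ if $f\circ s=\mathrm{id}_A$ (such maps exist because $f$ is onto), and write $s=(s_1,\dots,s_m)$ with $s_j:A\to A$. An $n\times m$ matrix whose columns lie in $\theta$ and whose row-wise $f$-image lies outside $\theta$ is, after relabelling the rows by $A$ along its (necessarily bijective) output tuple, exactly a section all of whose coordinate maps $s_j$ miss a value. Hence: $f$ preserves $\theta$ if and only if every section of $f$ has at least one bijective coordinate map. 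Next I would prove a ``dominating coordinate'' lemma: if there is a \emph{fixed} $i$ such that $s_i$ is bijective for every section $s$, then $f$ depends only on $x_i$ and is therefore essentially unary; indeed, if $f(\bar a)\neq f(\bar b)$ with $a_i=b_i$, one assembles a section $s$ with $s(f(\bar a))=\bar a$ and $s(f(\bar b))=\bar b$ (and arbitrary preimages elsewhere), and then $s_i$ identifies the two distinct values $f(\bar a)$ and $f(\bar b)$, so it is not bijective. Combining this with the reformulation reduces us to the case in which, for \emph{each} coordinate $i$, some section has $s_i$ non-bijective --- equivalently, there is a value $b_i\in A$ such that $f$ restricted to $\{\bar x:x_i\neq b_i\}$ is still onto $A$.

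The core of the proof is to rule this last situation out when $f$ genuinely depends on at least two variables (if it depends on at most one, it is already essentially unary). I would argue by peeling off coordinates one at a time. Suppose $f$ has been restricted to a subcube $D=D_1\times\dots\times D_m$ (with each $D_j\neq\emptyset$) on which it is still onto $A$, starting from $D=A^m$. If some $D_i$ equals $A$ and $f|_D$ depends on at least two coordinates, then the dominating-coordinate lemma applied to $f|_D$ produces a value $b_i$ for which that full coordinate $D_i$ may be shrunk to $A\setminus\{b_i\}$ with surjectivity onto $A$ preserved. This process cannot stall: if $f|_D$ (still onto $A$) depends on at most one coordinate, it is either constant --- impossible since $n\geq 2$ --- or of the form $h(x_p)$ with $h$ a bijection and $D_p=A$, and then, since $f$ itself depends on some variable $r\neq p$ and $f|_D$ does not, the dependence of $f$ on $x_r$ is witnessed only off $D$; taking two such witnessing rows together with $n-2$ further rows inside $D$ (where $f=h(x_p)$, used to realise the remaining output values) and inserting repeated entries in the other columns --- possible precisely because $n\geq 3$ --- yields an $n\times m$ matrix with all columns in $\theta$ and bijective $f$-image, contradicting $\theta$-preservation. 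The same conclusion holds if the peeling runs to completion, since then $f$ is onto $A$ on a proper subcube $\prod_j(A\setminus\{b_j\})$, which again produces such a matrix. Hence the situation is impossible and $f$ is essentially unary.

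I expect the main obstacle to be this final step: both the bookkeeping of the peeling (keeping track that at each stage the restriction is onto $A$, still has the property ``every section has a bijective coordinate'', and, when it fails to depend on two coordinates, has its unique essential coordinate among the full ones) and, above all, the several sub-cases of the explicit bad-matrix construction, organised according to the position of the witnessing variable $r$ relative to the already-peeled coordinates. It is also here that the size hypothesis is genuinely used in the form $n\geq 3$: for $n=2$ the relation $\theta$ is just the equality relation, preserved by every operation, while e.g.\ binary conjunction on a two-element set is surjective and not essentially unary --- so the statement is to be read with $n\geq 3$.
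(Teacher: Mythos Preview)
The paper does not supply its own proof of this lemma; it is quoted from the cited reference (Lau's textbook) and used as a black box. So there is nothing in the paper to compare your argument against.

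That said, your outline is essentially correct for $n\ge 3$. The reformulation of $\theta$-preservation via sections is accurate; the dominating-coordinate lemma is correct and carries over verbatim to the restrictions $f|_D$ (provided $D_i=A$, which is the only case where ``$s_i$ bijective'' is even possible, so the already-peeled coordinates automatically have non-bijective $s_i$); and the peeling terminates in one of the two cases you describe, each of which yields the desired bad matrix. The only spot requiring a little care is the repeat in column $r$ of the final matrix when $n=3$: with just one auxiliary row $w_1$ you need $(w_1)_r\in\{u_r,v_r\}\cap D_r$, and this intersection is non-empty because $u_r\ne v_r$ while $|A\setminus D_r|\le 1$.

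Your remark about $n=2$ is also on target: there $\theta$ is the equality relation, preserved by every operation, so the conclusion fails (e.g.\ for Boolean conjunction). The standard statement of S\l upecki's theorem indeed assumes $n\ge 3$, and the paper's applications only concern that range.
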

 
 The set of operations preserving the above relation $\theta$ is called the {\em S\l upecki} clone; it is known to be a maximal clone, and it is easy to see that it contains all non-surjective operations and all unary operations. To show that a reflexive digraph is S\l upecki, it is necessary and sufficient to prove that its polymorphisms preserve the relation $\theta$; it is well-known (see \cite{MR2254622}) that this is equivalent to showing that there exists a primitive, positive definition of $\theta$ in terms of the edge relation of the digraph. Or in more convenient terms for our purposes:
 
 \begin{lemma} The digraph $\bbG$ is S\l upecki if and only if there exists a digraph $\bbK$ and vertices $x_1,\dots,x_n$ of $\bbK$ such that 
   $$\theta = \{(f(x_1),\dots,f(x_n)): f:\bbK \rightarrow \bbG\}.$$ \end{lemma}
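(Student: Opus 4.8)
The plan is to assemble the statement from two pieces. The first piece is already recorded in the paragraph just above the lemma: by Lemma~\ref{lemma-slupecki}, $\bbG$ is S\l upecki if and only if every polymorphism of $\bbG$ preserves $\theta$, and, by the classical Galois correspondence between polymorphisms and primitive positive definitions (see \cite{MR2254622}), this holds if and only if $\theta$ has a primitive positive definition from the edge relation $E$ of $\bbG$. Since this piece is quoted, all that remains is the second piece: the routine ``canonical database'' translation between a pp-definition of $\theta$ over $\bbG$ and a digraph $\bbK$ with distinguished vertices $x_1,\dots,x_n$ satisfying $\theta = \{(f(x_1),\dots,f(x_n)) : f\colon \bbK\to\bbG\}$.

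For the implication from a pp-definition to such a $\bbK$, I would fix a pp-definition $\theta(y_1,\dots,y_n)\equiv \exists z_1\cdots\exists z_m\,\bigwedge_{j}\alpha_j$, where each $\alpha_j$ is an equality between two of the variables or an atom $E(s,t)$. Let $\bbK$ be the reflexive digraph whose vertices are the variables $y_1,\dots,y_n,z_1,\dots,z_m$ modulo the equivalence relation generated by the equality atoms, whose arcs consist of all loops together with $([s],[t])$ for each atom $E(s,t)$ among the $\alpha_j$, and set $x_i$ to be the class of $y_i$. A homomorphism $f\colon\bbK\to\bbG$ is exactly an assignment of values to the variables that respects the equalities and the $E$-atoms (the loops impose nothing, as $\bbG$ is reflexive); hence $(f(x_1),\dots,f(x_n))$ ranges precisely over the $n$-tuples extendable by some choice of $z_1,\dots,z_m$ to a solution of $\bigwedge_j\alpha_j$, that is, over $\theta$.

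For the reverse implication: given $\bbK$ with vertex set $\{x_1,\dots,x_n\}\cup\{z_1,\dots,z_m\}$, assign a variable to every vertex ($y_i$ to $x_i$, a fresh variable to each $z_j$), existentially quantify the variables of the $z_j$, and conjoin an atom $E(\cdot,\cdot)$ for each non-loop arc of $\bbK$; a satisfying assignment of the resulting pp-formula with $(y_1,\dots,y_n)$ fixed to a tuple $\overline{a}$ is literally a homomorphism $\bbK\to\bbG$ sending $x_i\mapsto a_i$, so the formula defines $\{(f(x_1),\dots,f(x_n)) : f\colon\bbK\to\bbG\}=\theta$, whence $\theta$ is pp-definable over $\bbG$ and $\bbG$ is S\l upecki.

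I do not expect a genuine obstacle here; the lemma is essentially a reformulation. The only care required is bookkeeping: permitting equality atoms in the pp-formula, handled by passing to the quotient digraph above (this also explains why the $x_i$ need not be assumed distinct, since $\theta$ contains tuples with any two prescribed coordinates distinct, the $x_i$ are automatically distinct whenever the construction succeeds), and the reflexivity convention for $\bbK$, which costs nothing because loops impose no constraint on homomorphisms into the reflexive digraph $\bbG$. One should also keep in mind the implicit hypothesis $n=|G|\ge 2$, as in Lemma~\ref{lemma-slupecki}.
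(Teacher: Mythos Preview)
Your proposal is correct and follows precisely the approach the paper has in mind. In fact the paper gives no separate proof of this lemma at all: it simply states, just before the lemma, that by the standard Galois correspondence (citing \cite{MR2254622}) being S\l upecki is equivalent to $\theta$ having a pp-definition from $E$, and then offers the lemma as a ``more convenient'' rephrasing via the canonical-database / gadget translation. You have spelled out exactly that translation, with the appropriate bookkeeping for equality atoms and reflexivity, so there is nothing to add.
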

 
 If the above holds we say that the gadget $\bbK$ {\em pp-defines} the S\l upecki relation $\theta$. It turns out that finding the gadget $\bbK$ is fairly easy for various families of cycles (see below), but we do not know of a uniform gadget construction for all cycles. 
 %these were first shown to be S\l upecki by Demetrovics and Ronyai \cite{MR1020459}.
 %We can also do some other examples of cycles (see below). Unfortunately, the trick we use provably cannot work for odd symmetric cycles, i.e. the structure $\bbK$ of %Theorem \ref{theorem-uniform-gadget} does not exist for the 5-cycle (and probably neither for all odd cycles).  

 \begin{definition} Let $\bbG$ be a digraph, and let $S \subseteq G$. If there exists a digraph $\bbK$, vertices $x_1,\dots,x_k,u \in K$ and a partial map $f:\{x_1,\dots,x_k\} \rightarrow G$ such that \\ $S = \{g(u): g:\bbK \rightarrow \bbG, \, g(x_i)=f(x_i) \, \text{ for all } i \}$ then we say {\em $S$ is pp-defined by $(\bbK,f,u)$.}\end{definition}

%Notice that every subset of $\bbG$ can be pp-defined by {\em some} triple $(\bbK,f,u)$ precisely when $\bbG$ is quasiprojective\footnote{A structure is {\em %quasiprojective} if every of its polymorphism is conservative, i.e. satisfies $f(x_1,\dots,x_n) \in \{x_1,\dots,x_n\}$ for all $x_i$; it is known that for posets and symmetric 
%(reflexive ? check !) graphs, $\bbG$ is quasiprojective iff it is idempotent trivial. Question: is this true for reflexive digraphs in general ?} ; in particular, if $\bbG$ is %idempotent-trivial, then all subsets are pp-defined this way.

\begin{theorem} \label{theorem-uniform-gadget} Let $\bbG$ be a digraph. If there exists a digraph $\bbK$ with vertices $x_1,\dots,x_k,u$ such that the conditions below hold, then $\bbG$ is S\l upecki.
\begin{enumerate}
\item for all $a \in G$, there exists some $f_a$ such that $(\bbK,f_a,u)$ pp-defines $G \setminus \{a\}$;
\item for all $f$, the set pp-defined by $(\bbK,f,u)$ is a proper subset of $G$. 
\end{enumerate}
 \end{theorem}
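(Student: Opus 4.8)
The plan is to reduce the problem to pp\nobreakdash-definability of the S\l upecki relation $\theta=\{(a_1,\dots,a_n):|\{a_1,\dots,a_n\}|<n\}$ of Lemma \ref{lemma-slupecki}, where $n=|G|$: by the lemma characterising S\l upecki digraphs through the existence of a gadget pp\nobreakdash-defining $\theta$, it suffices to construct a digraph $\bbK'$ together with distinguished vertices $z_1,\dots,z_n$ such that $\theta=\{(g(z_1),\dots,g(z_n)):g\colon\bbK'\to\bbG\}$. We may assume $n\ge 2$, since for $n=1$ the digraph $\bbG$ is trivially S\l upecki. The gadget I would use is a ``fibre product of $n$ copies of $\bbK$ over $x_1,\dots,x_k$'': take pairwise disjoint copies $\bbK_1,\dots,\bbK_n$ of $\bbK$, write $v^{(j)}$ for the copy of a vertex $v$ in $\bbK_j$, identify $x_i^{(1)}=\dots=x_i^{(n)}$ for each $i$, and put $z_j:=u^{(j)}$. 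Then a homomorphism $g\colon\bbK'\to\bbG$ is precisely the data of a tuple $(c_1,\dots,c_k)\in G^k$ — the common image of the identified $x_i$'s — together with homomorphisms $g_j\colon\bbK\to\bbG$ ($1\le j\le n$) satisfying $g_j(x_i)=c_i$ for all $i$, and then $g(z_j)=g_j(u)$.

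With this description the inclusion $\{(g(z_1),\dots,g(z_n)):g\colon\bbK'\to\bbG\}\subseteq\theta$ is immediate from condition (2): letting $f$ be the total map $x_i\mapsto c_i$, every $g(z_j)=g_j(u)$ lies in the set pp\nobreakdash-defined by $(\bbK,f,u)$, which by (2) omits some $a\in G$, so $(g(z_1),\dots,g(z_n))\in(G\setminus\{a\})^n\subseteq\theta$. For the reverse inclusion one wants condition (1), and here is the one genuine difficulty: if some $f_a$ is a proper partial map, the homomorphisms $\bbK\to\bbG$ witnessing the various elements of $G\setminus\{a\}$ need not agree on the $x_i$ outside $\mathrm{dom}(f_a)$, so they cannot be glued along $\bbK'$. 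To deal with this I would first replace each $f_a$ by its restriction to a common domain. Put $D:=\bigcap_{a\in G}\mathrm{dom}(f_a)$ and $f_a':=f_a|_D$; since $f_a'$ imposes fewer constraints than $f_a$, the set pp\nobreakdash-defined by $(\bbK,f_a',u)$ contains $G\setminus\{a\}$, while by (2) it is a proper subset of $G$, and as $G\setminus\{a\}$ is a maximal proper subset of $G$ the two coincide, so $(\bbK,f_a',u)$ again pp\nobreakdash-defines $G\setminus\{a\}$. (Note $D\ne\emptyset$: otherwise this conclusion applied to two distinct $a$ would force the single set pp\nobreakdash-defined by $(\bbK,\emptyset,u)$ to equal two different sets.)

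Now I would rebuild $\bbK'$ identifying only the vertices $x_i$ with $i\in D$, so that a homomorphism $\bbK'\to\bbG$ is a partial assignment $(c_i)_{i\in D}$ together with homomorphisms $g_j\colon\bbK\to\bbG$ agreeing with it on $\{x_i:i\in D\}$ and otherwise independent. The argument of the previous paragraph still yields $\{(g(z_1),\dots,g(z_n)):g\colon\bbK'\to\bbG\}\subseteq\theta$, now using (2) for the partial map $x_i\mapsto c_i$ $(i\in D)$. Conversely, let $(b_1,\dots,b_n)\in\theta$; it omits some $a\in G$, so each $b_j$ lies in $G\setminus\{a\}$, which is the set pp\nobreakdash-defined by $(\bbK,f_a',u)$, and hence there are homomorphisms $g_j\colon\bbK\to\bbG$ with $g_j|_D=f_a'$ and $g_j(u)=b_j$. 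Since the $g_j$ agree on $\{x_i:i\in D\}$, they assemble into a homomorphism $g\colon\bbK'\to\bbG$ with $g(z_j)=b_j$ for all $j$. Therefore $\bbK'$ pp\nobreakdash-defines the S\l upecki relation $\theta$, and $\bbG$ is S\l upecki.

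I expect the only real obstacle to be the gluing obstruction just described; the small observation that makes it vanish — that, under (1) and (2), each $f_a$ may be replaced by its restriction to $D=\bigcap_a\mathrm{dom}(f_a)$ without altering the pp\nobreakdash-defined set — is the crux, and the remaining work is routine bookkeeping about the correspondence between homomorphisms out of the fibre product and compatible families of homomorphisms out of $\bbK$.
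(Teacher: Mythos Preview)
Your proof is correct and follows essentially the same construction as the paper: glue $n=|G|$ copies of $\bbK$ along the distinguished vertices $x_i$ and verify that the resulting gadget pp-defines the S\l upecki relation $\theta$, with condition~(2) giving the inclusion into $\theta$ and condition~(1) the reverse. The paper's argument is terser and implicitly treats each $f_a$ as a total map on $\{x_1,\dots,x_k\}$ (which is indeed the case in every application in the paper); you went further and handled the possibility that the $f_a$ are genuinely partial, correctly observing that the witnessing homomorphisms for different elements of $G\setminus\{a\}$ might then disagree on $x_i\notin\mathrm{dom}(f_a)$ and hence fail to glue. Your fix---restrict every $f_a$ to the common domain $D=\bigcap_a\mathrm{dom}(f_a)$, use maximality of $G\setminus\{a\}$ together with~(2) to see the pp-defined set is unchanged, and glue only along $D$---is clean and correct, and your observation that $D\neq\emptyset$ is a nice sanity check.
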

 
   \begin{figure}[htb]
\begin{center}
\includegraphics[scale=0.5]{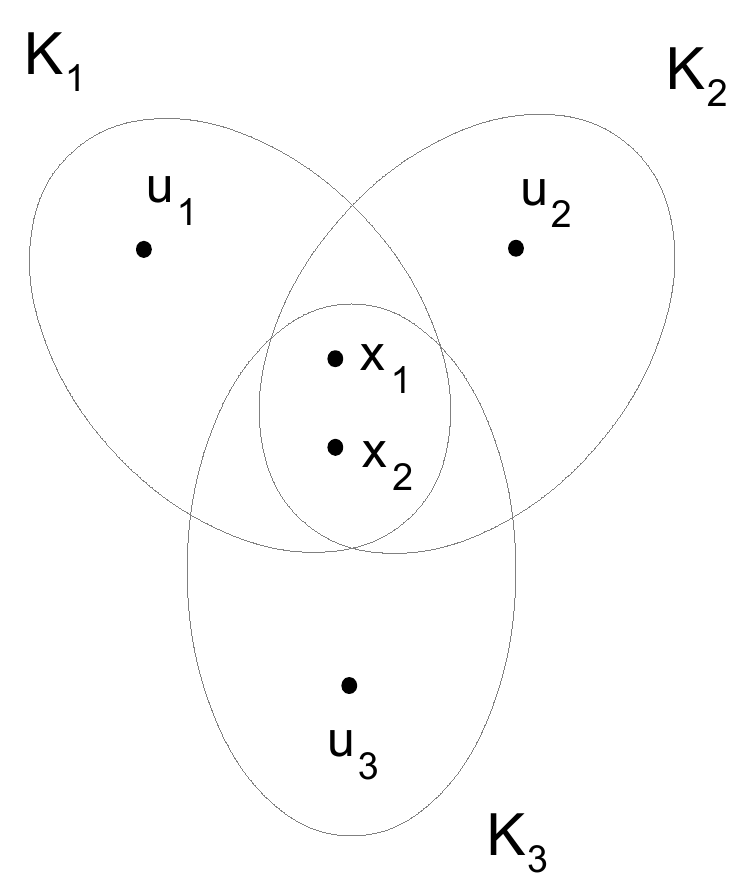}
 \caption{A gadget $\bbL$ with $n=3$ and $k=2$. }
 \end{center}
\end{figure}

\begin{proof} Let $n = |G|$. Glue $n$ distinct copies of $\bbK$ by identifying their vertices $x_1,\dots,x_k$; rename their respective vertices $u$ as $u_1,\dots,u_n$. We claim that this structure $\bbL$ pp-defines the S\l upecki relation $\theta$. Indeed, let $g:\bbL \rightarrow \bbG$; by the second property, we know that the values of $g$ on $u_1,\dots,u_n$ can never be all distinct; and when  the  restriction of $g$ to $x_1,\dots,x_n$ is equal to $f_a$, then we will obtain all $n$-tuples that miss the value $a$.  \end{proof}

 Notice that if $k=1$, condition (2) of Theorem \ref{theorem-uniform-gadget} is implied by condition (1). \\

   %This can be seen as we need graphs that do not contain a homomorphic image of a $2-chain$, where no vertex has degree $1$ and the Euler characteristic of the circle %is 0 (which coincides with the alternating sum $k_0 - k_1$ where $k_i$ is the number of simplices of dimension $i$ in $K(\bbG).$ \bnote{fix this}

 \begin{prop} \label{prop-directed-cycles}[\cite{MR2480633}] A directed cycle of girth at least 3 is S\l upecki. \end{prop}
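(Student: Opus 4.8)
The plan is to exhibit an explicit gadget, via Theorem \ref{theorem-uniform-gadget}, that pp-defines (in the parametrised sense) every set $G \setminus \{a\}$ in the directed $n$-cycle $\bbC_n$ (with $n \geq 3$), using a single $k=1$ gadget so that condition (2) comes for free. Label the vertices of $\bbC_n$ by $\bbZ_n$ with arcs $i \to i+1$ (plus loops). First I would observe the key rigidity fact: a homomorphism $\bbC_n \to \bbC_n$ is either an automorphism (a rotation) or a constant map — indeed, following a directed $n$-cycle in $\bbC_n$ must return to its start, and since the only closed directed walks of length $n$ are the full cycle or a fixed loop, the image of the cycle $0 \to 1 \to \cdots \to 0$ is either all of $\bbC_n$ (forcing a rotation) or a single vertex. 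This is the structural engine; it is already implicit in the fact that $\bbC_n$ is idempotent trivial.

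Next I would build the gadget $\bbK$. Take $\bbK$ to be a directed path $P$ of length $n$, with vertices $x_0, x_1, \dots, x_n$ and arcs $x_j \to x_{j+1}$, augmented by a distinguished "output" vertex $u$ joined so as to read off a prescribed vertex. Concretely: identify $x_0$ with the single parameter vertex $x_1$ of Theorem \ref{theorem-uniform-gadget}, set $u = x_0$, and pin the parameter to the value $f_a(x_0) = a+1$. A homomorphism $g: P \to \bbC_n$ sending $x_0 \mapsto a+1$ must, at each step, either advance by one or stay; after $n$ steps it has advanced some amount $0 \le t \le n$, but in fact any $t$ from $0$ to $n-1$ is realizable (advance $t$ times, stall $n-t$ times) and $t = n$ returns to $a+1$ as well. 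The value $g(u) = g(x_0) = a+1$ is then always $a+1$, which is not quite what I want — so instead I attach $u$ via a \emph{backward} detour: add a second directed path of length $n-1$ from a free end back toward $u$, forcing $g(u)$ to range over exactly $G \setminus \{a\}$. The cleanest version: let $\bbK$ consist of vertices $u, y_1, \dots, y_{n-1}$ and a parameter vertex $x$ with arcs $u \to y_1 \to \cdots \to y_{n-1} \to x$ (length $n$ total) together with $x \to u' \to \cdots$ closing things up; pinning $x \mapsto a$ forces $g(u)$ to be one of $a+1, a+2, \dots, a+(n-1) = a-1$, i.e. precisely $G \setminus \{a\}$, since a directed path of length $n$ from $u$ to a vertex pinned at $a$ means $g(u) = a - (\text{number of genuine steps})$ and the number of genuine steps ranges over $1, \dots, n-1$ (it cannot be $0$, as that would need a loop-only walk giving $g(u)=a$, but the walk must actually traverse to $a$ from $g(u)$ with total length $n$, and length-$0$ advancement would mean $g(u)=a$ which is excluded by requiring at least one real step — this needs a small argument, see below).

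The main obstacle, and the step I would spend the most care on, is pinning down exactly which values $g(u)$ can take: I need the realizable set to be \emph{precisely} $G \setminus \{a\}$, not a proper subset (which would violate condition (1)) and not all of $G$ (condition (2), automatic when $k=1$). The subtlety is excluding the value $a$ itself while including all $n-1$ others. The fix is to use a path of length exactly $n$ from $u$ to the pinned vertex $x$ with value $a$: a homomorphic image of a directed path of length $n$ in $\bbC_n$ from $g(u)$ to $a$ corresponds to a directed walk of length $n$, which advances by $s$ where $s \equiv a - g(u) \pmod n$ and $0 \le s \le n$; but $s = 0$ forces $g(u) = a$ \emph{and} requires all $n$ edges to be loops at $a$, which is fine — so I must instead force at least one non-loop, e.g. by subdividing: insert a gadget copy that forbids the all-loop walk (such as requiring the path to pass through a vertex with two distinct neighbours realized). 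Equivalently, and more simply, take the path of length $n$ but additionally pin $y_1 \neq x$ by a second parameter — but that uses $k=2$. I would resolve this by taking $\bbK$ to have the parameter at one end and $u$ at the other with path-length $n$, and \emph{separately} verifying that for this $\bbK$ the all-equal walk is the unique obstruction, concluding that the pp-defined set is $\{a\} \cup \{a+1,\dots\}$ minus nothing — at which point I would instead use path-length $n$ from $u$ \emph{to itself} through $x$: $u \to \cdots \to x \to \cdots \to u$ with total length $n$ and $x$ pinned to $a$; then $g(u)$ can be anything, but composing this with the length constraint forces $g(u) \in \{a+j : 1 \le j \le n-1\}$ once one insists the first segment has positive length. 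Cleaning this combinatorial bookkeeping up is the crux; everything else (condition (2) being automatic for $k=1$, and then invoking Theorem \ref{theorem-uniform-gadget}) is immediate. Since this is exactly the sort of explicit small construction that appears in \cite{MR2480633}, I would ultimately just cite that source for the gadget and present the homomorphism-rigidity observation as the conceptual reason it works.
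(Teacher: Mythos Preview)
Your framework is exactly that of the paper: invoke Theorem \ref{theorem-uniform-gadget} with a single parameter vertex ($k=1$) and a directed-path gadget. But you never land on the correct path length, and that is the entire content of the argument. The paper takes $\bbK$ to be a directed path of length $m-2$ (where $m$ is the girth), with the parameter $x$ at the start and $u$ at the end. If $f(x)=i+1$, then a homomorphism $g:\bbK\to\bbG$ sends $u$ to a vertex reachable from $i+1$ by a directed walk of length exactly $m-2$; since each step either loops or advances by one, the reachable set is $\{i+1,i+2,\dots,i+(m-2)\}=\{i+1,\dots,i-1\}=G\setminus\{i\}$. That is the whole proof.

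Your attempts all use path length $n$ (i.e.\ $m$), which is too long: a walk of length $m$ from a fixed start can advance anywhere from $0$ to $m$ steps, so it hits every vertex and the pp-defined set is all of $G$, violating condition (2). You correctly diagnose that the all-loop walk is the obstruction, but every repair you propose (backward detours, closing the path into a cycle through $x$, adding a second parameter) either fails or abandons $k=1$. The fix is not to forbid the all-loop walk; it is to shorten the path by two so that full wraparound is simply impossible. Also, the rigidity observation about endomorphisms of $\bbC_n$ (rotations or constants) is true but plays no role here; the gadget argument is purely a reachability count.
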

 
 \begin{proof} We invoke Theorem \ref{theorem-uniform-gadget}. Let $m$ be the girth of $\bbG$. The gadget $\bbK$ consists of a directed path of length $m-2$, with  starting point $x_1 = x$ and end point $u$. Notice that the only vertex not reachable from $i+1$ in $\bbG$ by a directed path of length $m-2$ is $i$, hence $(\bbK,f,u)$ pp-defines $G \setminus \{i\}$ when $f(x) = i+1$.   
 \end{proof}
 
 \begin{prop} A symmetric even cycle of girth at least 4 is S\l upecki. \end{prop}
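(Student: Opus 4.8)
The plan is to apply Theorem~\ref{theorem-uniform-gadget}, exactly in the spirit of the proof of Proposition~\ref{prop-directed-cycles}. Write $2m$ for the girth of $\bbG$, so $m \geq 2$; take the vertex set of $\bbG$ to be $\{0,1,\dots,2m-1\}$ with a two-way edge between $i$ and $i+1$ for each $i$ (indices mod $2m$), together with all loops. The one structural fact I would record first is that in this reflexive even cycle there is a well-defined antipode: for each vertex $b$, the unique vertex at graph distance exactly $m$ from $b$ is $b+m$, and every other vertex lies at distance at most $m-1$ from $b$. This is where evenness of the girth is used.

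For the gadget I would take $\bbK$ to be the symmetric path of length $m-1$, with starting point $x_1 = x$ and endpoint $u$, and $k=1$. The key observation is that a homomorphism $g:\bbK\to\bbG$ with $g(x)=b$ is precisely a walk of length $m-1$ in $\bbG$ beginning at $b$, and $g(u)$ can be any endpoint of such a walk. Since $\bbG$ is reflexive, a walk may stall at a vertex by using a loop, which absorbs any length--parity mismatch; hence the set of vertices reachable from $b$ by a walk of length $m-1$ is exactly the closed ball of radius $m-1$ about $b$, which by the previous paragraph is $G\setminus\{b+m\}$. Therefore, setting $f_a(x)=a+m$, the triple $(\bbK,f_a,u)$ pp-defines $G\setminus\{a\}$, which is condition~(1) of Theorem~\ref{theorem-uniform-gadget}; since $k=1$, condition~(2) is automatic (as noted immediately after that theorem), and the proposition follows.

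The only point that needs genuine care is the reachability computation, and I would do it by hand: every vertex at distance $d\le m-1$ from $b$ is hit by a walk of length $m-1$ (step toward it $d$ times, then take $m-1-d$ loop steps), no vertex at distance $\ge m$ is hit (insufficient length), and the antipode is well defined and unique exactly because $2m$ is even. The boundary case $m=2$ (the $4$-cycle) is worth checking explicitly: there $\bbK$ degenerates to a single symmetric edge, the ball of radius $1$ about $b$ is its full closed neighbourhood $\{b-1,b,b+1\}=G\setminus\{b+2\}$, and condition~(1) already holds. I do not expect any real obstacle beyond this bookkeeping; the mild subtlety is simply that, unlike the directed-cycle case, here loops at the gadget vertices are what let the walk-length parity be adjusted freely.
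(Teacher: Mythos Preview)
Your proof is correct and follows essentially the same approach as the paper: invoke Theorem~\ref{theorem-uniform-gadget} with the single-pin gadget $\bbK$ equal to a path of length $m-1$, and observe that the only vertex not reachable from a given starting point by such a path is its antipode. You have in fact supplied more detail than the paper does (the explicit use of reflexivity to absorb parity, the boundary case $m=2$, and the remark that $k=1$ makes condition~(2) automatic); the one minor wording slip is in your final sentence, where it is the loops in the target $\bbG$, not in the gadget, that allow the walk to stall.
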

 
 \begin{proof} Let $\bbG$ be the cycle of girth $n=2m$; we invoke Theorem \ref{theorem-uniform-gadget}: the gadget $\bbK$ consists of a path of length $m-1$ with  starting point $x_1=x$ and endpoint $u$. Notice that the only vertex not reachable from $a$ in $\bbG$ by a path of length $m-1$ is its antipode $b$; hence  
 $(\bbK,f,u)$ pp-defines $G \setminus \{b\}$ when $f(x) = a$.
 \end{proof}
 
 \begin{definition} Let $m \geq 2$. The {\em $2m$-crown} is the reflexive digraph on $\{0,1,\dots,2m-1\}$ with arcs $(2i,2i \pm 1)$, $0 \leq i \leq m-1$ (modulo $2m$).   \end{definition}
 
   \begin{figure}[htb]
\begin{center}
\includegraphics[scale=0.5]{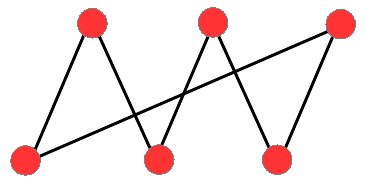}
 \caption{The 6-crown. Arcs are all oriented bottom to top. }
 \end{center}
\end{figure}

% In other words, its edge structure is given by the word $(+-)^m$. These are well-known, well-studied posets. They were first shown to be S\l upecki by Demetrovics and %Ronyai\cite{MR1020459}.  
 \begin{prop}[\cite{MR1020459}] Every crown is S\l upecki. \end{prop}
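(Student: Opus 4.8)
The plan is to invoke Theorem \ref{theorem-uniform-gadget} with a gadget $\bbK$ tailored to the combinatorial structure of the $2m$-crown. Recall that in the $2m$-crown the vertices split into an ``even'' independent set $\{0,2,\dots,2m-2\}$, whose elements are sources, and an ``odd'' independent set $\{1,3,\dots,2m-1\}$, whose elements are sinks; each source $2i$ has out-neighbours exactly $2i-1$ and $2i+1$ (indices mod $2m$), and correspondingly each sink $2j+1$ has in-neighbours exactly $2j$ and $2j+2$. So the crown is, up to relabelling, the bipartite ``cyclic'' incidence digraph; I would first record this description and note that every homomorphism $\bbG \to \bbG$ must send the set of non-loop sources into the sources and the non-loop sinks into the sinks (a source has an out-neighbour distinct from itself, a sink does not), which is what lets a single-vertex image of the gadget be forced.

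Next I would construct $\bbK$. Since every vertex of the crown lies on a non-loop arc, one natural choice is to take $\bbK$ to consist of two input vertices, $x_1$ and $x_2$, together with a new vertex $u$ and arcs $x_1 \to u$ and $x_2 \to u$; more generally one wants a gadget whose ``output'' $u$ is forced, given a choice of values at the $x_i$, to range over the complement of a single vertex. Concretely, to pp-define $G \setminus \{2i+1\}$ one uses the fact that a sink $2i+1$ is the unique common out-neighbour of the two sources $2i$ and $2i+2$; dually, fixing an input to be a sink and asking $u$ to be an in-neighbour picks out the two sources incident to it. Combining a short directed path (of length one, here, since the crown has ``diameter'' patterns of length $2$) with a fan-in of two arcs at $u$, one gets, for each vertex $a$, a partial map $f_a$ on the inputs for which $(\bbK, f_a, u)$ pp-defines exactly $G \setminus \{a\}$ — handling the source case and the sink case separately, which by the symmetry of the crown are essentially the same argument. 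For condition (2) of Theorem \ref{theorem-uniform-gadget}, I would check that for \emph{every} assignment $f$ to the inputs the set pp-defined by $(\bbK,f,u)$ omits at least one vertex: the bipartite structure means the image of $u$ is confined to one side (sources or sinks) as soon as a neighbour of $u$ in $\bbK$ is pinned, and a side has size $m < 2m = |G|$, so the set is automatically proper; if $u$ has a pinned in-neighbour and a pinned out-neighbour the set is even smaller or empty.

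The main obstacle I anticipate is getting a \emph{single} gadget $\bbK$ (with a fixed vertex set and fixed distinguished vertices $x_1,\dots,x_k,u$) that simultaneously realises $G\setminus\{a\}$ for \emph{every} $a$, source or sink alike, rather than needing two different shapes of gadget. The fix is to include in $\bbK$ both a vertex $p$ with an arc $p \to u$ and a vertex $q$ with an arc $u \to q$ (so that $u$ is sandwiched), give the reachability/co-reachability relations from $p$ and $q$ in the crown a careful audit, and then for each target $a$ choose $f$ to pin $p$ or $q$ appropriately while leaving the other ``free'' (unpinned) so it imposes no constraint. One must verify that leaving a vertex unpinned does not accidentally enlarge the defined set past $G\setminus\{a\}$, and that pinning does not collapse it below $G\setminus\{a\}$; this is a finite case check over the two parities. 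I would also remark, as the excerpt does for the directed-cycle case, that when the resulting gadget has $k=1$ the second hypothesis of Theorem \ref{theorem-uniform-gadget} is free, which slightly streamlines the write-up if one can arrange a single input vertex.

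I expect the verification to be routine once the gadget is pinned down; the creative step is purely in choosing $\bbK$, and the cyclic-bipartite description of the crown makes the right choice almost forced. Should the uniform-gadget route prove awkward at $m=2$ (the $4$-crown, which is a $4$-cycle with a particular orientation and is already covered by the directed- or symmetric-cycle propositions), I would simply treat that base case separately and run the gadget argument for $m \geq 3$.
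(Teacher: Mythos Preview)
Your plan to go through Theorem \ref{theorem-uniform-gadget} is the right framework, but the gadget you propose cannot do the job, and the argument is internally inconsistent. With a gadget built from one or two arcs incident to $u$ (your fan-in $x_1\to u$, $x_2\to u$, or the sandwich $p\to u\to q$), the set of values attainable at $u$ once a neighbour is pinned is just an in- or out-neighbourhood in the reflexive $2m$-crown, and those have size at most $3$. Condition~(1) of Theorem \ref{theorem-uniform-gadget} asks you to realise $G\setminus\{a\}$, a set of size $2m-1$; for $m\ge 3$ no constant-length gadget can do this. Your own justification of condition~(2) -- that the image of $u$ is ``confined to one side'' of size $m$ -- already rules out condition~(1), since $m<2m-1$. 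There is also a logical inversion: the fact that $2i+1$ is the \emph{unique} common out-neighbour of $2i$ and $2i+2$ means your fan-in gadget with $f(x_1)=2i$, $f(x_2)=2i+2$ pp-defines the singleton $\{2i+1\}$, not its complement.

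The paper's gadget is quite different and has length scaling with $m$: it uses a \emph{single} input vertex $x$ (so $k=1$ and condition~(2) is free) joined to $u$ by \emph{two} alternating zig-zag paths of length $m$, one beginning with a forward edge and the other with a backward edge. The point is that $u$ must be reachable from $f(x)=a$ along both paths simultaneously; one of the two paths reaches every vertex, while in the other the initial ``wrong-direction'' edge forces a stall at $a$, leaving only $m-1$ steps and hence missing exactly the antipode $a+m$. The moral is that, unlike the directed-cycle and $\bbG_n$ examples where a single edge suffices, the crown genuinely requires a gadget whose size grows with $m$.
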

 
   \begin{figure}[htb]
\begin{center}
\includegraphics[scale=0.4]{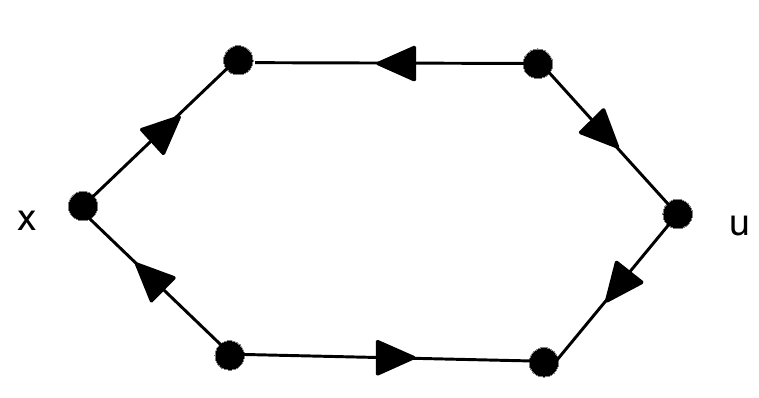}
 \caption{The gadget for the 6-crown.}
 \end{center}
\end{figure}
 
 \begin{proof} We invoke Theorem \ref{theorem-uniform-gadget}: the gadget $\bbK$ consists of two paths $\bbP$ and $\bbQ$ glued at their start point (call this $x_1=x$) and also glued at their end point, call that one $u$: $\bbP$ is an alternating sequence of forward and backward edges of length $m$, and $\bbQ$ is an alternating sequence of the same length but opposite orientations. We claim that if we set $f(x) = a$, then $(\bbK,f,u)$ pp-defines $G \setminus \{a+m\}$. Indeed, by symmetry of $\bbK$ and $\bbG$ we may assume without loss of generality that $a=0$; then using the path $\bbQ$, we can reach all vertices of $\bbG$ except $m$ (because in the first step the backward edge forces the value to remain 0, and this leaves a path of length $m-1$ so we cannot reach $m$). Notice that for $a=0$, using the path $\bbP$ we can reach all vertices. This proves our claim. \end{proof}

Here is a rather ad hoc example. 

 \begin{prop} Let $\bbG$ be the 4-cycle with the following edge structure:\\
  $E(\bbG)= \{(0,1), (1,2),(2,1), (2,3),(3,0),(0,3)\}$. Then $\bbG$ is S\l upecki. \end{prop}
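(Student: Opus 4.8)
The plan is to invoke Theorem~\ref{theorem-uniform-gadget} and exhibit an explicit gadget $\bbK$, exactly as in the preceding propositions, but now we must be more careful because $\bbG$ has two symmetric edges $\{1,2\}$ and $\{0,3\}$ and two non-symmetric edges $(0,1)$ and $(2,3)$. First I would orient oneself: label the four vertices $0,1,2,3$ and note that the only non-loop arcs are $0\to 1$, $1\leftrightarrow 2$, $2\to 3$, $3\leftrightarrow 0$. The structure has an automorphism (the map $0\leftrightarrow 2$, $1\leftrightarrow 3$, which reverses all arcs — one should check it is in fact an automorphism, or else work with each vertex by hand), so it is plausible that a single gadget shape works with four choices of the boundary map $f$.

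Next I would search for a small digraph $\bbK$ with distinguished vertices $x=x_1$ and $u$ such that, for each $a\in\{0,1,2,3\}$, fixing $g(x)=a$ forces $g(u)$ to range over exactly $G\setminus\{b(a)\}$ for some $b(a)$, i.e. exactly three of the four values are reachable at $u$. The natural candidates are short walks in $\bbG$ respecting arc directions: from $0$ the out-neighbourhood is $\{0,1\}$, from $1$ it is $\{1,2\}$ (and $1$ is reached-from $\{1,2,0\}$... wait, $0\to1$ so $1$ has in-neighbour $0$ as well), etc. Concretely I would try $\bbK$ to be a path of length $2$ or $3$ with a prescribed orientation pattern on its edges, possibly with an extra pendant constraint vertex $x_2$ to rule out one value at $u$; condition (2) of Theorem~\ref{theorem-uniform-gadget} — that for \emph{every} partial map $f$ the pp-defined set is a proper subset of $G$ — is then either automatic (if $k=1$, by the remark after the theorem) or must be checked for the finitely many choices of $f$ on $\{x_1,x_2\}$.

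The key steps in order: (i) fix the gadget $\bbK$ and the vertices $x_1,\dots,x_k,u$; (ii) for each $a\in G$, compute the set $\{g(u): g:\bbK\to\bbG,\ g(x_1)=a\}$ by propagating the arc constraints of $\bbK$ through $\bbG$, and verify it equals $G$ minus exactly one element, thereby exhibiting $f_a$ and verifying condition~(1); (iii) if $k>1$, verify condition~(2) by running through the remaining boundary maps; (iv) conclude by Theorem~\ref{theorem-uniform-gadget} that $\bbG$ is S\l upecki. Because $G$ has only four elements and $\bbK$ will have only a handful of vertices, each of these is a finite check that can be displayed as a short case analysis or a small table of reachable sets.

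The main obstacle I anticipate is step~(ii): the two symmetric edges make reachability ``leak'' in both directions along them, so a naive directed path of the kind used for directed cycles will reach \emph{all} four vertices from some starting points and hence fail to omit exactly one. The trick will be to use the \emph{non}-symmetric arcs $0\to1$ and $2\to3$ as one-way valves — arranging the gadget so that the forced walk must traverse a non-symmetric arc in its ``hard'' direction at a point where that blocks precisely one target at $u$ — much as the crown proof uses a backward edge in the first step to pin the value. If no single path shape achieves this for all four starting values simultaneously, the fallback is to add one auxiliary constrained vertex $x_2$ (with $f_a(x_2)$ chosen depending on $a$) to kill the extra value, at the cost of checking condition~(2); I expect one of these two designs to work, and finding the right one is the only real content of the argument.
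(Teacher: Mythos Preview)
Your plan matches the paper's: invoke Theorem~\ref{theorem-uniform-gadget} with a single boundary vertex ($k=1$), so that condition~(2) is automatic. What you have not yet found is the gadget itself. The paper takes $\bbK$ to be a \emph{directed $4$-cycle} with $x$ and $u$ antipodal --- not an open path. Your diagnosis of why paths are awkward is exactly right: the symmetric edges $\{1,2\}$ and $\{0,3\}$ let an open walk stall or backtrack and reach all four targets from some starting points. A closed directed cycle forces the image to make net forward progress all the way around, so the one-way arcs $0\to 1$ and $2\to 3$ act precisely as the ``valves'' you were looking for, and no auxiliary vertex $x_2$ is needed.

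One correction on the symmetries. The involution $0\leftrightarrow 2$, $1\leftrightarrow 3$ is a genuine \emph{automorphism} of $\bbG$ (e.g.\ $(0,1)\mapsto(2,3)$, still an arc), not arc-reversing. The paper verifies by hand that $f(x)=1$ forces $f(u)\in\{1,2,3\}=G\setminus\{0\}$; this automorphism then transports the computation to $f(x)=3$, giving $G\setminus\{2\}$. For the remaining two complements the paper uses a \emph{second} symmetry: $\bbG$ is isomorphic to its own arc-reversal (via $0\leftrightarrow 1$, $2\leftrightarrow 3$), and the directed $4$-cycle $\bbK$ is likewise self-reverse, so rerunning the same argument with all arcs flipped yields $G\setminus\{1\}$ and $G\setminus\{3\}$. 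Thus the four-case table you anticipated collapses to one explicit computation plus two symmetry arguments.
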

  
  \begin{figure}[htb]
\begin{center}
\includegraphics[scale=0.5]{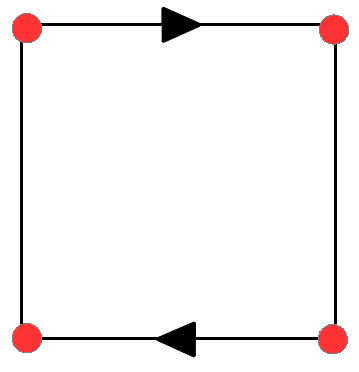}
 \caption{An ad hoc example of a 4-cycle. }
 \end{center}
\end{figure}

\begin{proof}  Let $\bbK$ be a directed 4-cycle; choose some vertex $x_1=x$ and let $u$ be its antipode. Suppose that $f(x)=1$; then it is easy to see that $f(u) \in \{1,2,3\}$. By symmetry of $\bbG$, if $f(x) = 3$ then $(\bbK,f,u)$ pp-defines $\{0,1,3\}$. Finally, if we reverse all arcs, the same argument shows that $(\bbK,f,u)$ pp-defines the other two 3-element subsets for some choice of $f$. We can now apply Theorem \ref{theorem-uniform-gadget}.

 \end{proof}

 \begin{lemma} \label{lemma-spheres}A digraph $\bbG$ triangulates a 1-sphere if and only if $\bbG$ is a directed 3-cycle or a cycle of girth at least 4. \end{lemma}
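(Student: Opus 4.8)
The plan is to unwind the definition of the simplicial realisation $K(\bbG)$ and reduce the statement to an elementary case analysis. Recall that $S \subseteq G$ is a simplex of $K(\bbG)$ exactly when it is the image of a homomorphism from some chain; reading off the first occurrence of each vertex along such a chain shows that $S$ is a simplex if and only if its elements can be linearly ordered as $s_0,\dots,s_d$ with $s_i \to s_j$ whenever $i \le j$. In particular the $1$-simplices of $K(\bbG)$ are precisely the edges of $\bbG_{Sym}$, and the $2$-simplices are precisely the \emph{transitive triples} of $\bbG$ (three distinct vertices admitting such an ordering). Since a simplicial complex is downward closed, $K(\bbG)$ has dimension at most $1$ if and only if $\bbG$ has no transitive triple. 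I will then use the classical recognition of the circle among finite complexes: $|K(\bbG)| \cong S^1$ if and only if $K(\bbG)$ is isomorphic to a cycle graph $C_n$ with $n \ge 3$ --- equivalently $K(\bbG)$ is connected, $1$-dimensional, and every vertex lies in exactly two edges (a simplex of dimension $\ge 2$ or a vertex of degree $\ne 2$ destroys the $1$-manifold property, and connectedness rules out stray components).

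For ($\Leftarrow$): if $\bbG$ is a directed $3$-cycle, a direct check of the six orderings of its vertex set shows none is transitive, so $K(\bbG)$ is $1$-dimensional with $1$-skeleton $\bbG_{Sym} = C_3$, whence $K(\bbG) = C_3$ and $|K(\bbG)| \cong S^1$. If $\bbG$ is an $n$-cycle with $n \ge 4$, then $\bbG_{Sym} = C_n$ is triangle-free, so $\bbG$ has no transitive triple, $K(\bbG)$ is $1$-dimensional, and again $K(\bbG) = C_n$, so $|K(\bbG)| \cong S^1$.

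For ($\Rightarrow$): assume $|K(\bbG)| \cong S^1$. Connectedness forces $\bbG_{Sym}$ (the underlying graph of the $1$-skeleton of $K(\bbG)$) to be connected, i.e. $\bbG$ is connected; the $1$-manifold property forces $K(\bbG)$ to be $1$-dimensional with every vertex in exactly two edges, so $\bbG_{Sym}$ is a cycle graph $C_n$ for some $n \ge 3$. Hence the only adjacent pairs of $\bbG$ are the consecutive ones around this cycle, each carrying one or both arcs, so after relabelling $\bbG$ is an $n$-cycle in the sense of Section \ref{sect-prelim}. If $n \ge 4$ we are done. If $n = 3$, I must show $\bbG$ is the directed $3$-cycle; since $K(\bbG)$ has no simplex of dimension $\ge 2$, the triple of vertices is not transitive. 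A short case analysis closes this: if some pair of vertices carries both arcs, then no matter how the two arcs at the remaining vertex are oriented one exhibits a transitive ordering of the three vertices; and if every pair carries a single arc, then $\bbG$ is a $3$-vertex tournament, hence either the directed $3$-cycle or a transitive tournament, the latter being a transitive triple. Thus non-directed $3$-cycles are excluded.

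The only genuine work is this $n = 3$ case analysis, together with the careful translation between ``being a simplex of $K(\bbG)$'' and ``being a transitively ordered tuple'' --- the first-occurrence argument is needed precisely because the defining chain may take repeated values. Everything else is a direct reading of the definitions of Section \ref{sect-prelim} and the standard recognition of $S^1$ among finite simplicial complexes, so I anticipate no conceptual obstacle beyond this bookkeeping.
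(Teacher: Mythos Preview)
Your proposal is correct and follows essentially the same route as the paper: both argue that $\bbG_{Sym}$ must be a connected $2$-regular graph (you via the $1$-manifold property, the paper via an Euler-characteristic count showing $|V|=|E|$) and then use the absence of $2$-simplices to force the directed $3$-cycle when $n=3$. Your write-up is actually more complete, spelling out the converse direction and the $n=3$ case analysis explicitly where the paper is terse.
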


\begin{proof}
Let $\bbG$ triangulate a 1-sphere. Then $\bbG_{Sym}$ is connected,   has no pendant vertex, and it has Euler characteristic 0, i.e. the number of vertices is equal to the number of edges, and thus every vertex has degree exactly 2. Hence $\bbG$ is a cycle. Furthermore, there cannot be any simplex of size 3, thus if it is a 3-cycle then it is the directed 3-cycle. Consequently, the digraphs that triangulate a 1-sphere are precisely the directed 3-cycle and the cycles of girth at least 4. \end{proof}

\begin{theorem} \label{theorem-big-paper} If $\bbG$ triangulates a 1-sphere, then $\bbG$ is S\l upecki.
\end{theorem}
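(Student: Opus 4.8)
The plan is to combine the classification of Lemma \ref{lemma-spheres} with the results already available. By that lemma, a digraph $\bbG$ triangulating a 1-sphere is either the directed 3-cycle or a cycle of girth at least 4, so it suffices to treat these two cases separately.

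First I would dispose of the directed 3-cycle: this is an instance of Proposition \ref{prop-directed-cycles} (a directed cycle of girth at least 3 is S\l upecki), via the uniform gadget of Theorem \ref{theorem-uniform-gadget}. That takes care of the only triangulation of a 1-sphere that is not symmetric and has girth 3.

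For cycles of girth at least 4, the gadget constructions given in this section only cover subfamilies (symmetric even cycles, and the ad hoc 4-cycle), so the full statement must be obtained from the companion paper \cite{LLP}, whose main theorem is precisely that every cycle of girth at least $4$ is S\l upecki. I would recall how that argument is structured: a cycle $\bbG$ of girth at least $4$ triangulates the 1-sphere, hence by Theorem \ref{thm-embedding} it is enough to show that for every $p \geq 2$ and every onto polymorphism $f:\bbG^p \rightarrow \bbG$ there is an embedding $e:\bbG \hookrightarrow \bbG^p$ whose composite with $f$ is onto; this embedding-existence statement is exactly what is verified in \cite{LLP}. Putting the two cases together yields the theorem.

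The main obstacle is entirely contained in the second case, and is not resolved inside the present excerpt: producing, for an arbitrary onto polymorphism $f$ of an arbitrary cycle of girth at least $4$, an embedded copy of $\bbG$ in $\bbG^p$ on which $f$ is still surjective. The uniform-gadget method of Theorem \ref{theorem-uniform-gadget} does not seem to extend to all cycles, which is why one must route through Theorem \ref{thm-embedding} and the detailed combinatorial analysis of \cite{LLP}; the remaining steps (the classification lemma and the directed 3-cycle) are routine given the results stated above.
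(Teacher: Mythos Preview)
Your proposal is correct and follows essentially the same route as the paper: invoke Lemma \ref{lemma-spheres} to reduce to the directed 3-cycle and cycles of girth at least 4, then apply Proposition \ref{prop-directed-cycles} and the main result of \cite{LLP} respectively. Your added remarks about how \cite{LLP} proceeds via Theorem \ref{thm-embedding} are accurate context but not required for the proof itself.
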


\begin{proof} This follows from the previous lemma, Proposition \ref{prop-directed-cycles} and the main result of \cite{LLP}. \end{proof}

\begin{prop} Let $n \geq 2$. Let $\bbG_n$ be obtained from the complete graph on $2n$ vertices by removing the edges $\{i,i+n\}$ for all $0\leq i \leq n-1$. Then
\begin{enumerate}
\item $\bbG_n$ triangulates $\bbS^{n-1}$;
\item $\bbG_n$ is S\l upecki.
\end{enumerate}
\end{prop}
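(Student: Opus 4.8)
The plan is to prove (1) by identifying the simplicial realisation of $\bbG_n$ with the boundary complex of a cross‑polytope, and (2) by exhibiting one tiny gadget and invoking Theorem \ref{theorem-uniform-gadget}; as it happens, part (2) does not need part (1).

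For (1): since $\bbG_n$ is reflexive and symmetric, a subset $S\subseteq G$ is a simplex of $K(\bbG_n)$ if and only if $S$ induces a complete subgraph of $\bbG_n$ — given a clique $s_1,\dots,s_m$ the chain $1<\dots<m$ with $i\mapsto s_i$ is a homomorphism with image $S$, and conversely any two vertices in the image of a chain homomorphism are images of comparable elements, hence adjacent. The complement of $\bbG_n$ inside the complete graph $K_{2n}$ is exactly the perfect matching pairing $i$ with $i+n$, so the faces of $K(\bbG_n)$ are precisely the subsets of $\{0,\dots,2n-1\}$ meeting each pair $\{i,i+n\}$ in at most one vertex; this is the face complex of the boundary of the $n$‑dimensional cross‑polytope $\mathrm{conv}\{\pm e_1,\dots,\pm e_n\}$, whose geometric realisation is $\bbS^{n-1}$, proving (1). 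Equivalently, and more in the spirit of Section \ref{sect-prelim}, one checks that deleting the two vertices $n-1$ and $2n-1$ from $\bbG_n$ leaves an induced subdigraph isomorphic to $\bbG_{n-1}$ while each of these two vertices is joined by a two‑way edge to every remaining vertex and the two are non‑adjacent; hence $\bbG_n$ is the suspension of $\bbG_{n-1}$, and since $\bbG_1$ (two loops, no further edges) triangulates $\bbS^0$ and the realisation of a suspension is the topological suspension of the realisation, induction gives (1).

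For (2): I apply Theorem \ref{theorem-uniform-gadget} with $k=1$ and $\bbK$ the reflexive digraph on two vertices $x_1,u$ joined by a two‑way edge. For $v\in G$ write $\bar v$ for the unique non‑neighbour of $v$ in $\bbG_n$ (so $\bar i=i+n\bmod 2n$). A homomorphism $g:\bbK\to\bbG_n$ with $g(x_1)=v$ is nothing but a choice of $g(u)$ in the closed neighbourhood $N[v]=G\setminus\{\bar v\}$. Hence for each $a\in G$, taking $f_a(x_1):=\bar a$, the triple $(\bbK,f_a,u)$ pp‑defines $N[\bar a]=G\setminus\{a\}$, giving condition (1) of Theorem \ref{theorem-uniform-gadget}; and for any choice of value $v=f(x_1)$, $(\bbK,f,u)$ pp‑defines $N[v]=G\setminus\{\bar v\}$, a proper subset of $G$, giving condition (2) (which for $k=1$ is in any case implied by condition (1)). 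Theorem \ref{theorem-uniform-gadget} then yields that $\bbG_n$ is S\l upecki. One could instead derive (2) from (1) via Theorem \ref{thm-embedding}, but the gadget argument is self‑contained.

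I do not expect a genuine obstacle. The only point requiring care is the combinatorial bookkeeping in (1) — verifying that removing a partner pair from $\bbG_n$ returns $\bbG_{n-1}$ with the removed vertices in suspension position, equivalently that the clique complex of $\bbG_n$ is the cross‑polytope boundary — and this is routine once one isolates the fact that the complement of $\bbG_n$ in $K_{2n}$ is a perfect matching. That same observation (every vertex of $\bbG_n$ has exactly one non‑neighbour, so every closed neighbourhood omits exactly one vertex) is precisely what makes the gadget in (2) work immediately.
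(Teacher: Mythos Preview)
Your proof is correct and follows essentially the same approach as the paper: for (1) the paper also argues that $\bbG_n$ is the suspension of $\bbG_{n-1}$ (using $\bbG_2$, the 4-cycle, as base case rather than your $\bbG_1$), and for (2) the paper uses exactly your gadget, a symmetric edge with endpoints $x$ and $u$, observing that the only vertex not reachable from a chosen image of $x$ is its antipode. Your additional identification of $K(\bbG_n)$ with the boundary of the cross-polytope is a nice explicit supplement that the paper leaves implicit.
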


\begin{proof} (1) Notice that $\bbG_2$ is a 4-cycle and thus triangulates $\bbS^{1}$; and it is easy to see that $\bbG_n$ is the suspension of $\bbG_{n-1}$. 
(2) We invoke Theorem \ref{theorem-uniform-gadget}: we use as gadget $\bbK$ a symmetric edge with vertices $x$ and $u$. Once having chosen the image of $x$, the only vertex not in its reach is its antipode.  

\end{proof}

\begin{prop} Let $n \geq 3$. Let $\bbH_n$ be obtained from the complete graph on $n$ vertices by removing the directed cycle $(0,1),\dots,(n-1,0)$. Then
\begin{enumerate}
\item $\bbH_n$ triangulates $\bbS^{n-2}$;
\item $\bbH_n$ is S\l upecki.
\end{enumerate}
\end{prop}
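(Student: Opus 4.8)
The plan is to establish the two parts separately by direct arguments.

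For part (1), I would compute the simplicial complex $K(\bbH_n)$ and recognise it as the boundary complex of an $(n-1)$-simplex on the vertex set $V=\{0,1,\dots,n-1\}$, that is, the complex whose faces are exactly the proper subsets of $V$; its geometric realisation is $\bbS^{n-2}$. The key observation is that the only ordered pairs of distinct vertices that fail to be arcs of $\bbH_n$ are the pairs $(i,i+1)$ with indices taken mod $n$. Consequently a subset $S\subseteq V$ is a face of $K(\bbH_n)$ if and only if its elements can be listed as $s_0,\dots,s_k$ so that $(s_i,s_j)$ is an arc for every $i<j$, equivalently so that whenever $s\in S$ and $s+1\in S$ the element $s+1$ appears before $s$ in the list. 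The relation ``$s+1$ before $s$'' defines a digraph on $S$ in which every vertex has in-degree and out-degree at most one, so it admits such a listing precisely when it has no directed cycle, which happens exactly when $S\neq V$ (a directed cycle would force $S$ to contain all residues mod $n$). Concretely, for $S\subsetneq V$ one picks $m\in V\setminus S$ and lists $S$ in order of decreasing cyclic distance from $m$; and $V$ itself is not a face, since a valid listing $v_0,\dots,v_{n-1}$ would produce a function $\sigma$ on $\{0,\dots,n-1\}$ with $v_i+1=v_{\sigma(i)}$ and $\sigma(i)<i$ for all $i$, which is impossible. Hence $K(\bbH_n)$ is the boundary of an $(n-1)$-simplex and (1) follows. (For $n=3$ this recovers the fact, consistent with Lemma~\ref{lemma-spheres}, that the directed $3$-cycle triangulates $\bbS^{1}$.)

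For part (2), I would apply Theorem~\ref{theorem-uniform-gadget} with the simplest possible gadget: let $\bbK$ be the reflexive digraph on two vertices $x_1=x$ and $u$ whose only non-loop arc is $x\to u$, so $k=1$. For a homomorphism $g:\bbK\to\bbH_n$ with $g(x)=c$, the admissible values of $g(u)$ are exactly the out-neighbours of $c$ in $\bbH_n$, namely $G\setminus\{c+1\bmod n\}$. Thus, taking $f_a(x)=a-1\bmod n$, the triple $(\bbK,f_a,u)$ pp-defines $G\setminus\{a\}$, which gives condition~(1) of Theorem~\ref{theorem-uniform-gadget}; and since every total choice $x\mapsto c$ yields the proper set $G\setminus\{c+1\}$, condition~(2) also holds (for $k=1$ it is in any case implied by~(1), as noted after that theorem). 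Therefore $\bbH_n$ is S\l upecki.

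Neither part presents a genuine obstacle. The verifications that the non-arcs of $\bbH_n$ are precisely the pairs $(i,i+1)$, that the decreasing-cyclic-distance listing respects all the required constraints, and that the single-arc gadget has the claimed behaviour are all routine. The one point deserving care is the identification in part (1): one must check both that every proper subset of $V$ is a face (via the explicit listing) and that $V$ itself is not (via the cyclicity obstruction), but each of these is short.
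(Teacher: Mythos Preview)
Your proposal is correct and follows the paper's approach: for part~(2) you use exactly the paper's gadget---a single directed edge from $x$ to $u$---and invoke Theorem~\ref{theorem-uniform-gadget} in the same way, and for part~(1) you make the same identification of $K(\bbH_n)$ with the boundary of an $(n-1)$-simplex. The paper is terser on (1), simply asserting that the simplicial complex is ``a truncated boolean lattice of height $n-2$'', whereas you spell out why every proper subset of $V$ is a face and $V$ itself is not; the content is identical.
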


%(Note: these digraphs are mentioned in \cite{MR2232298})
\begin{proof}  (1) It is easy to see that the simplicial complex of $\bbH_n$ is a truncated boolean lattice of height $n-2$. 
(2) To show that $\bbH_n$ is S\l upecki, we let the gadget $\bbK$  be a directed edge from $x$ to $u$. Again, once chosen the value for $f(x)=i$, by the construction of our graph, the only non-reachable vertex is $i+1$ (mod n). We conclude by using  Theorem \ref{theorem-uniform-gadget}.  

\end{proof}

\section{Ordinal sums}\label{sec:SumOrd} \label{sect-sums}

In this section we investigate the nature of surjective polymorphisms on posets of the form $m \oplus n $ and $m \oplus n \oplus k$ with $m,n,k \geq 2$. 
 %The natural number $s$ denotes the antichain with $s$ elements.  
These posets are known to be idempotent trivial  {\cite{DBLP:journals/ijac/Larose95}, and, hence, by Lemma \ref{629952}, the identity is an isolated loop in $Hom(\bbP,\bbP)$. 
These rather simple-looking posets will provide many examples of unexpected behaviour. 
Namely, although $m\oplus n$ is S\l upecki, none of the posets $n\oplus m\oplus k $ are; this includes  the poset $2 \oplus 2 \oplus 2$ which is the poset suspension of a 4-cycle and hence triangulates a 2-sphere. %Furthermore, some of them {\em are} actually 2-S\l upecki! 
When considering homomorphisms between posets we often use the term {\em monotone} instead.

 \begin{theorem} Let $m,n \geq 2$. Then $\bbP = m \oplus n$ is S\l upecki. \end{theorem}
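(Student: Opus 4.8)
The plan is to use Theorem~\ref{thm-embedding} (or Lemma~\ref{lemma-embedding}): since $\bbP = m \oplus n$ triangulates a $0$-sphere (two disjoint points, coming from the two antichains collapsed — actually more carefully, $K(\bbP)$ has geometric realisation homotopy equivalent to $\bbS^0$, as $\bbP = m\oplus n$ has the homotopy type of the join of two discrete sets of $m$ and $n$ points, which is a complete bipartite graph $K_{m,n}$, homotopy equivalent to a wedge of $(m-1)(n-1)$ circles — hmm, that's $\bbS^1$'s, not $\bbS^0$). Let me reconsider: in any case the paper already records that these posets are idempotent trivial and that the identity is an isolated loop in $Hom(\bbP,\bbP)$, so Lemma~\ref{lemma-embedding} applies directly. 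Thus it suffices to show: for every $p \geq 2$ and every onto polymorphism $f : \bbP^p \to \bbP$, there is an embedding $e : \bbP \hookrightarrow \bbP^p$ whose image $f$ maps onto $\bbP$.

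First I would set up notation: write the bottom antichain as $B = \{b_1,\dots,b_m\}$ and the top antichain as $T = \{t_1,\dots,t_n\}$, so $P = B \sqcup T$ with $b_i < t_j$ for all $i,j$ and no other strict relations. The key structural observation is that $B$ is exactly the set of minimal elements and $T$ the set of maximal elements of $\bbP$, and these are preserved (as sets) by any onto monotone map $\bbP \to \bbP$; more importantly I want to understand $\bbP^p$: a tuple $\overline{x}$ is minimal in $\bbP^p$ iff every coordinate lies in $B$, and maximal iff every coordinate lies in $T$, and the covering relations are: $\overline{x} \to \overline{y}$ (strictly) iff $x_i \leq y_i$ for all $i$ with at least one strict, which given the two-level structure means $x_i \in B \Rightarrow y_i \in (\{x_i\}\cup T)$ and $x_i \in T \Rightarrow y_i = x_i$. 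So the comparabilities in $\bbP^p$ between a "bottom-type" tuple and a "top-type" tuple are essentially unconstrained, while within the bottom layer ($B^p$) and top layer ($T^p$) the order is trivial (antichains).

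Next, the heart of the argument: analyze where $f$ sends $B^p$ and $T^p$. Since $f$ is onto and monotone, $f(B^p)$ consists of minimal-or-... elements — more precisely, elements below everything in $f(T^p)$. I would argue that $f(B^p) \subseteq B$ and $f(T^p) \subseteq T$: if some $\overline{x}\in B^p$ had $f(\overline{x}) = t_j \in T$, then since $\overline{x}$ is below every tuple in $T^p$, $t_j$ would be below every element of $f(T^p)$, impossible in $\bbP$ unless $f(T^p) = \{t_j\}$, and then onto-ness forces... one pushes this to a contradiction (this is where one uses $n \geq 2$ and $m\geq 2$ and the fact that all of $\bbP$ must be hit). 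Having established $f(B^p) \subseteq B$ and $f(T^p) \subseteq T$, and that $f$ is onto, one gets $f(B^p) = B$ and $f(T^p) = T$. Now $f|_{B^p} : B^p \to B$ is just an onto map between antichains of a set with $m \geq 2$ elements, so — by Lemma~\ref{lemma-slupecki} applied to the $m$-element set $B$ — either $f|_{B^p}$ is essentially unary or non-surjective; since it is surjective, it is essentially unary, say depending on coordinate $i$. Likewise $f|_{T^p}$ depends on some coordinate $j$. The final step is to show $i = j$ and to stitch these together into the required embedding: pick a fixed bottom element and a fixed top element witnessing the antichain-projections, and define $e : \bbP \hookrightarrow \bbP^p$ sending $b \mapsto (b,\dots,b)$-with-$b$ in slot $i$ and constant elsewhere in a compatible way so that $f\circ e$ is onto; concretely, if $i=j$ then $e(x) = (a,\dots,a,x,a,\dots,a)$ for a suitable fixed $a$ already does the job since $f(e(B)) = B$ and $f(e(T)) = T$. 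To see $i = j$: choose $b \in B$ and $t \in T$ with $b < t$; the tuples $e_i(b) = (\ast,\dots,b,\dots,\ast)$ and $e_j(t)$ must be comparable in $\bbP^p$ in a way forced by monotonicity, and a short argument using a mixed tuple (some coordinates bottom, some top) forces the two distinguished coordinates to coincide, or else one can exhibit a violation of the antichain-projection behaviour.

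I expect the main obstacle to be the step showing $f(B^p)\subseteq B$ and $f(T^p)\subseteq T$ cleanly, and then matching up the two projection indices $i$ and $j$ — in particular ruling out the scenario where $f$ "looks like" projection onto coordinate $i$ on the bottom layer but onto a different coordinate $j$ on the top layer. The resolution should come from considering comparable pairs $\overline{x} \to \overline{y}$ with $\overline{x}$ having mixed entries: monotonicity of $f$ on all of $\bbP^p$ (not just the two layers) glues the layer-wise behaviours together, and this is exactly the leverage needed. Once $i=j$ is in hand, the embedding $e$ is immediate and Lemma~\ref{lemma-embedding} finishes the proof, giving that $f$ is essentially unary, hence $\bbP$ is S\l upecki.
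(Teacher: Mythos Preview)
Your overall strategy (verify the hypotheses of Lemma~\ref{lemma-embedding} and then produce, for each onto $f:\bbP^p\to\bbP$, an embedded copy of $\bbP$ on which $f$ is onto) is exactly right, but the execution contains a real error. The step where you invoke Lemma~\ref{lemma-slupecki} to conclude that $f|_{B^p}$ is essentially unary is illegitimate: that lemma says that the operations \emph{preserving the relation $\theta$} are exactly the essentially unary and non-surjective ones. Your $f|_{B^p}$ is just an arbitrary surjective map $B^p\to B$ between antichains; there is no reason it preserves $\theta$, and in general a surjective $p$-ary operation on an $m$-element set need not be essentially unary (e.g.\ $m=p=2$ and $f(x,y)=x+y\bmod 2$). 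So the ``matching of projection indices $i$ and $j$'' programme is built on a false premise and cannot be repaired along these lines.

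The good news is that none of this machinery is needed. You have already argued (correctly) that $f(B^p)\supseteq B$ and $f(T^p)\supseteq T$: every minimal element of $\bbP$ has a preimage, and pushing that preimage down to a minimal element of $\bbP^p$ lands it in $B^p$; dually for $T$. Now simply \emph{choose} subsets $X_B\subseteq B^p$ and $X_T\subseteq T^p$ with $|X_B|=m$, $|X_T|=n$, $f(X_B)=B$, $f(X_T)=T$. Since $B^p$ and $T^p$ are antichains in $\bbP^p$ and every element of $B^p$ lies below every element of $T^p$, the induced subposet on $X_B\cup X_T$ is isomorphic to $m\oplus n=\bbP$, and by construction $f$ is onto when restricted to it. This is the embedding $e$ that Lemma~\ref{lemma-embedding} asks for; no analysis of how $f$ behaves coordinatewise is required. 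This is precisely the paper's proof.
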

 
 \begin{figure}[htb]
\begin{center}
\includegraphics[scale=0.5]{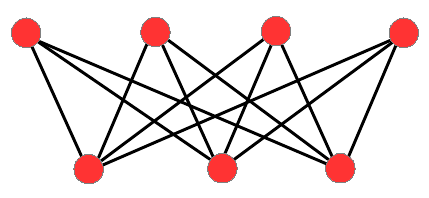}
 \caption{The poset $3 \oplus 4$. Arcs are all oriented bottom to top. } \label{gadget}
 \end{center}
\end{figure}

 \begin{proof} We are going to use Lemma \ref{lemma-embedding} in the $(1) \Rightarrow (2)$ direction. 
 We have just observed that the two technical conditions, namely that $\bbP$ is idempotent trivial and the identity is isolated, are fulfilled.
 Let $f:\bbP^s \rightarrow \bbP$ ($s\geq2$) be an onto polymorphism.
 Finding the embedding $e$ of Lemma \ref{lemma-embedding} means finding an isomorphic copy of $\bbP$ in $\bbP^s$ on which $f$ is onto.
 That is what we are going to do.
 
 Let $A$ and $B$ denote the set of minimal and maximal elements of $\bbP$, respectively.  
 We claim $f(A^s) \supseteq A$. 
 Pick $a\in A$, then $f(x)=a$ for some $x\in \bbP^s$, and, under $x$, there is a minimal element $y$ of  $\bbP^s$ for which $f(y)=a$, and, from minimality, we have $y \in A^s$. 
 %This is clear since $f$ is onto and every element of $\bbP^2$ is above some minimal element which is in $A^2$. 
 Dually, $f(B^s) \supseteq B$. 
 Now we can choose two sets of elements $X_A \subseteq A^s$ and $X_B \subseteq B^s$ such that 
 $$|X_A|=|A|, \; |X_B|=|B|, \; f(X_A)=A, \text{ and } f(X_B)=B.$$
 As the elements of $A^s$ and $B^s$ form two antichains such that the elements of $A^s$ are all under those of $B^s$, the poset $X_A \cup X_B$ is in fact an isomorphic copy of $\bbP$ in $\bbP^s$ on which $f$ is onto. 
 We are done. 
 %Then clearly we have an embedding $e$ of $\bbP$ into $\bbP^2$ such that $f(e(\bbP))=\bbP$. Since the poset is idempotent trivial and the identity is isolated, we conclude from Lemma \ref{lemma-embedding} that $f$ is essentially unary.  
 \end{proof}

The case of  ordinal sums of three antichains is more interesting.
First some notation. Once $m,n,k \geq 2$ are fixed, we let $A=\{a_0,\dots,a_{m-1}\}$,  $B=\{b_0,\dots,b_{n-1}\}$ and $C=\{c_0,\dots,c_{k-1}\}$ denote the minimal, middle and maximal elements of $\bbP=m \oplus n \oplus k$ respectively.

We first show that $m \oplus n \oplus k$ is not 3-S\l upecki (and hence not S\l upecki)  by directly exhibiting a not essentially-unary ternary polymorphism.

\begin{theorem} \label{thm-not-slu} Let  $m,n,k \geq 2$. Then $\bbP=m \oplus n \oplus k$ is not 3-S\l upecki, and, hence, not S\l upecki.  \end{theorem}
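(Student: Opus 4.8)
The plan is to construct explicitly a ternary surjective polymorphism $f:\bbP^3\to\bbP$ of $\bbP=m\oplus n\oplus k$ that is not essentially unary, and to verify by hand that it is monotone. The natural guess is to let $f$ behave, on the middle layer $B$, like a genuinely ternary operation on a set of size $n\ge 2$ (for instance a ``majority-flavoured'' or ``if-then-else''-type operation such as $f(b_i,b_j,b_\ell)=b_i$ unless $i\ne j=\ell$, in which case output $b_j$, or more simply the ternary operation $g(x,y,z)$ on indices that equals the minority value when two agree and one differs — any fixed non-essentially-unary operation on the index set $\{0,\dots,n-1\}$ will do), while on the bottom and top layers it should behave as a projection (say onto the first coordinate) so as to guarantee surjectivity and monotonicity across layers. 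Concretely: on any triple all of whose coordinates lie in $A$, output the first coordinate; on any triple all in $C$, output the first coordinate; on any triple all in $B$, output $g$ applied coordinatewise to the indices; and on a ``mixed'' triple — one whose coordinates do not all lie in the same layer — we must make a careful choice that respects monotonicity.

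The key steps, in order, are: \textbf{(i)} Fix the notation $A=\{a_0,\dots,a_{m-1}\}$, $B=\{b_0,\dots,b_{n-1}\}$, $C=\{c_0,\dots,c_{k-1}\}$ as in the excerpt, and fix a ternary operation $g$ on $\{0,1\}$ that depends on at least two of its variables (this suffices: restrict attention to $b_0,b_1$). \textbf{(ii)} Define $f$ layer-by-layer. The cleanest device is to define a ``level'' function: a triple $\overline x$ has level $0$ if all coordinates are in $A$, level $2$ if all are in $C$, and level $1$ otherwise; then set $f(\overline x)$ to lie in $A$, $C$, or $B$ respectively, and within layer $1$ define $f(\overline x)=b_{g(\epsilon(x_1),\epsilon(x_2),\epsilon(x_3))}$ where $\epsilon(x_i)=1$ if $x_i\in C$ and $\epsilon(x_i)=0$ otherwise (or some such rule that only looks at the ``$A$-or-not-$A$'' / ``$C$-or-not-$C$'' status of each coordinate). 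Within layer $0$ and layer $2$, set $f(\overline x)=x_1$. \textbf{(iii)} Check surjectivity: $f$ hits every $a_i$ (from $A^3$), every $c_j$ (from $C^3$), and every $b_\ell$ — here one uses that on $B^3$, $g$ applied to indices is surjective onto $\{0,1\}$ and one needs to also recover $b_2,\dots,b_{n-1}$; this forces the rule on $B^3$ to be chosen so that it is onto $\{0,\dots,n-1\}$ while still genuinely depending on two coordinates (e.g.\ $f(b_i,b_j,b_\ell)=b_i$ if $j=\ell$ and $f(b_i,b_j,b_\ell)=b_j$ otherwise — this is surjective and not essentially unary). \textbf{(iv)} Verify monotonicity: if $\overline x\le\overline y$ in $\bbP^3$ then $f(\overline x)\le f(\overline y)$ in $\bbP$. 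Because the partial order on $\bbP$ is simply determined by the three layers (everything in $A$ below everything in $B$ below everything in $C$, and each layer an antichain), the level of $\overline x$ is $\le$ the level of $\overline y$ whenever $\overline x\le\overline y$, and one checks that the only way $\overline x\le\overline y$ with equal levels is (essentially) $\overline x=\overline y$ on layer $1$, and coordinatewise-equal on layers $0$ and $2$ — so monotonicity reduces to: a strictly lower level maps to a strictly lower (or equal) layer, which holds by construction, plus reflexivity within a level. \textbf{(v)} Conclude $f$ is not essentially unary, since its restriction to $B^3$ depends on two coordinates, and hence $\bbP$ is not $3$-S\l upecki; since $k$-S\l upecki for $k\ge 3$ implies $(k-1)$-S\l upecki (as noted in the excerpt), $\bbP$ is not S\l upecki.

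The main obstacle I expect is step (iv), making the mixed-triple rule genuinely monotone while keeping the operation surjective and non-unary: the danger is a pair $\overline x\le\overline y$ where $\overline x$ has some coordinates in $A$ and some in $B$ (level $1$) and $\overline y$ has all coordinates in $B$ (also level $1$), forcing $f(\overline x)\le f(\overline y)$ within the antichain $B$, i.e.\ $f(\overline x)=f(\overline y)$. This means the layer-$1$ rule cannot really depend on the fine index of coordinates that might ``move up'' from $A$; the safe design is to have the layer-$1$ value depend only on the $\epsilon$-pattern (which of the coordinates are in $C$ versus not-$C$) — since a coordinate in $A$ or $B$ cannot become one in $C$ without the level also changing if all three do, one must double-check the boundary cases where exactly the right subset of coordinates sits in $C$. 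A clean way to sidestep the subtlety entirely is: define $f(\overline x)=x_1$ whenever $x_1\notin B$, and when $x_1\in B$ let $f(\overline x)=b_i$ where $i$ is determined by whether $x_2=x_3$ (output $x_1$'s index) or not (output $x_2$'s index, where if $x_2\notin B$ we read its index through the canonical $A\to B\to C$ identification by position); one then only needs to confirm that this canonical-position reading is monotone, which it is because moving up a layer never decreases the positional index relation in the way that matters. Aside from this bookkeeping the verification is entirely routine.
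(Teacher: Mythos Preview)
Your proposal contains a genuine gap: neither of the two candidate constructions yields a monotone surjective non-essentially-unary polymorphism.

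For the level-function approach, you send every triple at level $1$ (i.e.\ not in $A^3\cup C^3$) into $B$. But the level-$1$ region is connected by comparability inside $\bbP^3$: from any level-$1$ triple one reaches $(b_0,b_0,b_0)$ by a zig-zag of comparabilities that never leaves level $1$ (move each $A$- or $C$-coordinate into $B$ one at a time; every intermediate triple has some coordinate in $B$ and so remains at level $1$). Since $B$ is an antichain, any monotone map from a comparability-connected set into $B$ is constant, so $f$ hits exactly one element of $B$ and is not surjective. Your claim that the $\epsilon$-pattern is safe fails for precisely the reason you half-anticipated: a coordinate in $B$ can move up to $C$ while the triple stays at level $1$. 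For the ``sidestep'' construction, take $\overline x=(b_1,a_0,a_0)\le(b_1,b_0,a_0)=\overline y$: your rule gives $f(\overline x)=b_1$ (since $x_2=x_3$) but $f(\overline y)=b_0$ (since $y_2\ne y_3$, output the index of $y_2$), and these are incomparable in $B$. Structurally, you send each slice $\{b\}\times \bbP^2$ into $B$; but $\bbP^2$ is comparability-connected, so $f$ is constant on each such slice and hence depends only on its first coordinate --- essentially unary after all.

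The paper avoids this trap by \emph{not} sending all mixed triples into $B$. It singles out the antichain $\{(a_0,c_0,b_i):0\le i<n\}$ in $\bbP^3$, maps $(a_0,c_0,b_i)\mapsto b_i$, maps everything strictly below one of these points to $a_0$ and everything strictly above to $c_0$, maps the diagonal $(s,s,s)\mapsto s$ for $s\in A\cup C$, and sends everything else to $b_0$. The choice of first two coordinates $(a_0,c_0)\in A\times C$ is the key device: it makes the $n$ special points pairwise incomparable while leaving room for genuine $A$- and $C$-values below and above them, which is exactly what your level-based designs forbid.
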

  
  \begin{proof}
We exhibit a polymorphism $f:\bbP^3 \to \bbP$ that is onto and not essentially unary: let

$$
f(x,y,z)=
\begin{cases}
s, & \text{if }(x,y,z)=(s,s,s), \;\;  \text{for }s \in A\cup C \\
b_i, & \text{if }(x,y,z)=(a_0,c_0,b_i), \;\; (i=0, \dots, n-1) \\
a_0, & \text{if }(x,y,z)<(a_0,c_0,b_i), \;\; (i=0, \dots, n-1) \\
c_0, & \text{if }(x,y,z)>(a_0,c_0,b_i), \;\; (i=0, \dots, n-1) \\
b_0, & \text{otherwise.}
\end{cases}
$$

\noindent First, we have to verify that our function is well-defined.
There are, indeed, elements of $P^3$ that are given values multiple times in the definition;  we need to make sure  that they are  given the same value.
It is clear that this occurs for $(a_0,a_0,a_0)$ and $(c_0,c_0,c_0)$ which are given the values $a_0$ and $c_0$ respectively, at each of their occurrences in the definition, hence there is no conflict here.
Let $x^{\uparrow}=\{y: x\leq y\}$ and similarly $x_{\downarrow}=\{y: x\geq y\}$.
The only other way this multiple assignment phenomenon can happen for $(x,y,z) \in P^3$ is if it belongs to 
$(a_0,c_0,b_i)^{\uparrow} \cap (a_0,c_0,b_j)^{\uparrow}$ 
(or its dual) for $i\neq j$.
On those intersections, we define  our function to be $c_0$ ($a_0$ on its dual), hence this causes no conflict either, our function is well-defined. \\
It is clear that $f$ is onto. \\
Let's show that $f$ is not essentially unary.
The fact that
$c_0=f(a_0,c_0,c_0)\neq f(a_0, c_0, a_0)=a_0$
shows that if $f$ depends only on one variable it is the third;
if $f$ depended only its third variable, then its surjectivity would imply that it would be injective on its third variable, which contradicts the fact that
$f(a_0,c_0,a_0)=f(a_0,c_0,a_1)=a_0$. \\
Finally, the reader can easily verify that $f$ is a polymorphism using that $f$ is defined using upsets and downsets. \end{proof}

Let us turn our attention to binary polymorphisms.
Surprisingly, 2-S\l upeckiness of $m\oplus n \oplus k$ depends on $m$, $n$, and $k$:
 if $n$ is small enough compared to $m$ and $k$, then  $m\oplus n \oplus k$ is not 2-S\l upecki, and otherwise, it is.
More precisely, the main result of the section, Theorem \ref{757370}, states that there is a bound $B(m,k)$ for which the poset $m\oplus n \oplus k$ is 2-S\l upecki if and only if $n>B(m,k)+1$.

We start with a standalone, auxiliary result that we will make use of later:

 \begin{lemma}\label{lemma-auto} Let $m,n,k \geq 2$,  let $\bbP=m \oplus n \oplus k$ and let $f:\bbP^2 \rightarrow \bbP$ be a monotone map. 
 Suppose $|B| > 2$ and that $f(B^2) \subseteq B$ has size at least 2. 
 Then there exist $b,b' \in B$ distinct, $\tau_i \in Aut(\bbP)$, $i=0,1$, such that
 \begin{enumerate}
 \item $\tau_i(x)=x$ for all $x \in A \cup C$ and all $i=0,1$; 
 \item  $(\tau_0 \circ f \circ \sigma)(s,s)=s$ for $s \in \{b,b'\}$, where $\sigma(x,y) = (\tau_1(x),y)$.
 \end{enumerate}\end{lemma}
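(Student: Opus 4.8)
The plan is to exploit the fact that $f$ restricted to $B^2$ has image inside $B$ and this image is, essentially, controlled by the action of $f$ on the diagonal of $B^2$. Write $f_B = f|_{B^2}$. Since $B$ is an antichain, $f_B$ is an arbitrary (not necessarily monotone-restricted) map $B^2 \to B$; monotonicity of $f$ only forces that $f$ sends elements of $A^2$ below $f_B(B^2)$ and elements of $C^2$ above it, but imposes no internal constraint on $f_B$ itself. So the content of the lemma is really a statement about the single map $f_B : B^2 \to B$ together with the freedom to pre- and post-compose with permutations of $B$ that fix $A\cup C$ pointwise — and every permutation of $B$ extends (uniquely) to such a $\tau\in\operatorname{Aut}(\bbP)$, because $\bbP=m\oplus n\oplus k$ has automorphism group $S_m\times S_n\times S_k$.

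First I would pick, using the hypothesis $|f(B^2)|\ge 2$, two distinct elements $b,b'\in f_B(B^2)$ and fix witnesses: choose $(p,q),(p',q')\in B^2$ with $f_B(p,q)=b$ and $f_B(p',q')=b'$. The goal of clause (2) is to arrange, after twisting, that the diagonal entries at $b$ and $b'$ are fixed. The key observation is that the ``$\sigma(x,y)=(\tau_1(x),y)$'' move lets us independently relocate the \emph{first} coordinate: replacing $f(x,y)$ by $f(\tau_1(x),y)$ amounts to permuting the rows of the $B\times B$ value table of $f_B$. So I want to choose the permutation $\tau_1$ of $B$ so that the two chosen columns-entries line up on the diagonal: concretely, I need a permutation $\rho$ of $B$ (this will be $\tau_1|_B$) with $\rho(b)=p$ and $\rho(b')=p'$, \emph{and} such that $q$ sits in row $\rho(b)$ and $q'$ in row $\rho(b')$ — but that last part is automatic once we also fix the columns. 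Let me be more careful: I actually want $(f\circ\sigma)(b,b) = f(\rho(b),b) = f_B(p,b)$, so I need the column index to be $q$, i.e. I need $b=q$, which I cannot force. The correct reading is therefore the following two-step maneuver: first relabel $B$ once and for all (conjugating the whole situation by a fixed automorphism, which is harmless) so that the two witnesses become diagonal witnesses $f_B(b,b)=b$-ish; more precisely, after an initial harmless conjugation I may assume $b,b'$ are such that the values $f_B(b,b)$ and $f_B(b',b')$ are \emph{defined and distinct} — but this need not hold a priori. So the honest route is: use $\tau_1$ (i.e. $\sigma$) to permute rows so that $f_B(\rho(b),b)=b$ and $f_B(\rho(b'),b')=b'$; this is solvable precisely when for the target values $b,b'$ there exist rows achieving them in columns $b,b'$ respectively, which need not be true. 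Hence the real argument must instead use both $\tau_0$ and $\tau_1$ together.

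So the key step, which I expect to be the main obstacle, is the following counting/matching argument. Consider the column of $f_B$ indexed by $b$: it is a map $B\to B$, $x\mapsto f_B(x,b)$; its image has some size, and similarly for column $b'$. I want to pick $b,b'$ (among the $\ge 2$ elements of $f_B(B^2)$) and a single row-permutation $\tau_1$ and a single output-permutation $\tau_0$ so that $\tau_0 f_B(\tau_1(b),b)=b$ and $\tau_0 f_B(\tau_1(b'),b')=b'$. Since $\tau_0$ is a \emph{single} permutation applied to outputs, I need two preimages $u=\tau_1(b)$, $u'=\tau_1(b')$ with $u\ne u'$ and values $v=f_B(u,b)$, $v'=f_B(u',b')$ with $v\ne v'$, and then $\tau_0$ sends $v\mapsto b$, $v'\mapsto b'$ (extend arbitrarily), and $\tau_1$ sends $b\mapsto u$, $b'\mapsto u'$ (extend arbitrarily); this uses $|B|\ge 2$ for $\tau_0$ and $|B|\ge 2$ for $\tau_1$, but to also get $u\ne u'$ freely while $b\ne b'$ I can just demand it when choosing, and to get $v\ne v'$ I use that $b$ and $b'$ were chosen in the image. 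The delicate point — and this is where $|B|>2$ is needed rather than $|B|\ge 2$ — is producing such $u,u',v,v'$ simultaneously distinct: if every entry of column $b$ equalled every entry of column $b'$ in a conflicting way we could be stuck, and $|B|>2$ gives the extra room (three rows/columns) to avoid the degenerate $2\times 2$ obstruction. I would carry this out by a short case analysis on the images of the two columns, and then in each case exhibit the required $u,u',v,v'$ and read off $\tau_0,\tau_1$; clause (1) is then automatic since each $\tau_i$ was defined as the extension by the identity on $A\cup C$, and clause (2) is exactly the computation $\tau_0 f(\tau_1(s),s)=\tau_0 f_B(u,s)=s$ for $s\in\{b,b'\}$. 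Verifying that $\tau_0\circ f\circ\sigma$ is still monotone is immediate because $\sigma$ and $\tau_0$ are automorphisms of $\bbP$ (componentwise for $\sigma$), so this part is routine.
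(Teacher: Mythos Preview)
Your final reformulation is correct: it suffices to find distinct $b,b'\in B$ and distinct $u,u'\in B$ with $f_B(u,b)\neq f_B(u',b')$, after which $\tau_1$ sends $b\mapsto u,\ b'\mapsto u'$ and $\tau_0$ sends $f_B(u,b)\mapsto b,\ f_B(u',b')\mapsto b'$. But you never actually establish that such a quadruple exists; the promised ``short case analysis on the images of the two columns'' is not carried out, and this is exactly the step where $|B|>2$ matters (for $|B|=2$ the map $f_B(x,y)=x+y\bmod 2$ is a counterexample). A clean way to fill the gap: if no such quadruple exists then $f_B(u,b)=f_B(u',b')$ for all distinct $u,u'$ and all distinct $b,b'$; choosing a third row $u''\notin\{u,u'\}$ gives $f_B(u,b)=f_B(u'',b')=f_B(u',b)$, so every column is constant with the same value, contradicting $|f_B(B^2)|\geq 2$.

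The paper's proof is shorter and bypasses this matching argument. It first asks whether the \emph{diagonal} $t\mapsto f(t,t)$ on $B$ already takes two distinct values: if $f(b,b)=u\neq v=f(b',b')$, take $\tau_1=\mathrm{id}$ and let $\tau_0$ send $u\mapsto b,\ v\mapsto b'$. Otherwise the diagonal is a constant $z$, and since $|f(B^2)|\geq 2$ there exist $u\neq v$ in $B$ with $f(u,v)=w\neq z$; taking $\tau_1$ to be the transposition swapping $u$ and $v$, the new diagonal value at $v$ becomes $f(u,v)=w$ while at any $t\notin\{u,v\}$ it is still $z$ --- and $|B|>2$ is used precisely to guarantee such a $t$. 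One then reduces to the first case. Both approaches are valid, but the diagonal-versus-off-diagonal split in the paper avoids the (somewhat meandering) search you went through and makes the role of the hypothesis $|B|>2$ completely transparent.
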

  
  \begin{proof} Notice that $Aut(\bbP)$ contains every transposition of pairs of elements in $B$ (which fixes all other elements in $\bbP$.) We first prove the result in the case where the restriction of $f$ to the diagonal of $B^2$ has image of size at least 2: if $f(b,b)=u$ and $f(b',b')=v$ where $u \neq v$ then choose $\tau_0$ to be any automorphism of $\bbP$ that sends $u$ to $b$ and $v$ to $b'$: then $(\tau_0 \circ f)(s,s)=s$ for $s \in \{b,b'\}$ and we are done. Otherwise, there exist some $z$ such that $f(t,t)=z$ for all $t \in B$, and some $u \neq v$ and $w \neq z$ such that $f(u,v)=w$. Let $\tau_1 \in Aut(\bbP)$  swap $u$ and $v$. Then $(f \circ \sigma)(t,t)= z$ for any $t \not\in \{u,v\}$ and $(f \circ \sigma)(v,v)= f(\tau_1(v),v)=f(u,v)=w$. We conclude with the preceding argument.  
  
  \end{proof}

%%%%%%%%%%%%%%%%%%%%%%%%%%%

%%%%%%%%%%%%%%%%%%%%%%%%%%

Now we define the bound $B(m,k)$ of Theorem \ref{757370}.
%In itself, this definition is cryptic, but, actually, it is the solution of the {\it Combinatorial problem} of the previous explanatory subsection.

\begin{definition} \label{def-bmk} Let $m,k \geq 2$ be integers. We let $\mu(m,k)$ denote the maximum of the function $\alpha\gamma+\beta\delta$ where $\alpha,\beta,\gamma,\delta$ are integers satisfying $0<\alpha,\beta<m$, $0<\gamma,\delta < k$, $(m-\alpha)(m-\beta) \geq m-1$ and $(k-\gamma)(k-\delta) \geq k-1$. Let $B(m,k) = \max(\mu(m,k),mk)$.  
\end{definition}

From the definition and the individual bounds for $\alpha, \beta, \gamma$ and $\delta$, one sees easily that $mk\leq B(m,k) <2mk$ and in particular that the function is well-defined; notice also that $B(m,k) \geq 4$ for all $m,k \geq 2$. 
We have verified numerically that $B(m,k)=mk$ if $m,k\leq 11$. On the other hand, $B(12,12)=145$ as shown by the choice $\alpha=\gamma=9$, and $\beta=\delta=8$, and $B(10,13)=134$. 
%In particular, for any value of $m,k$ such that  $mk \leq 2(m-\sqrt{m-1})(k-\sqrt{k-1})$ the value of $B(m,k)$ will be greater than $mk$.

\begin{comment} Some analysis of this mysterious function $B(m,k)$ follows.

For small $m$ and $k$, $B(m,k)=mk$, but this pattern breaks for large numbers.
Namely, if $m,k\leq 11$, then we have $B(m,k)=mk$ (checked by computer), but, for example, $B(12,12)=145(=12^2+1)$ as shown by the choice $\alpha=\gamma=9$, and $\beta=\delta=8$. 

We state that if $m$ and $k$ tend to infinity, then $B(m,k) \approx 2mk$. 
To verify this, let 
$$\alpha_m=\beta_m=\lfloor m-m^{\frac{2}{3}}\rfloor, \; \text{ and } \;
\gamma_k=\delta_k=\lfloor k-k^{\frac{2}{3}}\rfloor.$$
Then, if $m$ and $k$ are large enough,
$$(m-\alpha_m)(m-\beta_m)\approx m^{\frac{4}{3}} > m-1,  \; \text{ and } \; (k-\gamma_k)(k-\delta_k)\approx k^{\frac{4}{3}} > k-1$$
hold, and we have
$$\alpha_m\gamma_k+\beta_m\delta_k\approx 2mk\left(1-\frac{1}{\sqrt[3]{m}}\right)\left(1-\frac{1}{\sqrt[3]{k}}\right)\approx 2mk.$$
A tighter bound and asymptotes that were obtained by combinatorial heuristics are also included in Appendix \ref{appendix}.

The main result of the section follows.
\end{comment}

\begin{theorem}\label{757370}
The poset $m\oplus n \oplus k$ is 2-S\l upecki if and only if $n>B(m,k)+1$.
\end{theorem}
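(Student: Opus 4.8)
The plan is to prove the two implications in contrapositive form. Throughout we use the observations recorded just before the theorem, namely that $\bbP$ is idempotent trivial and that the identity is an isolated loop in $Hom(\bbP,\bbP)$, so that Lemma~\ref{lemma-embedding} applies to $\bbP$; recall that $A,B,C$ denote the bottom, middle and top antichains of $\bbP$, of sizes $m,n,k$, and write $f_A,f_C$ for the restrictions of a binary map $f$ to $A^2$ and $C^2$.

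\emph{If $n\le B(m,k)+1$, then $\bbP$ is not $2$-S\l upecki.} In this direction I would simply exhibit a surjective polymorphism $f\colon\bbP^2\to\bbP$ that depends on both variables. Fix $H\subseteq(A\times C)\cup(C\times A)$ of size $B(m,k)$ realising the maximum of Definition~\ref{def-bmk}: either $H=A\times C$ (the ``$mk$'' option), or $H=(S_A\times S_C)\cup(T_C\times T_A)$ with index sets of sizes $\alpha,\gamma,\delta,\beta$ attaining $\mu(m,k)$. Pick surjections $f_A\colon A^2\to A$ and $f_C\colon C^2\to C$ in which every row or column meeting a pair of $H$ is constant; the inequalities $(m-\alpha)(m-\beta)\ge m-1$ and $(k-\gamma)(k-\delta)\ge k-1$ are exactly what guarantees such surjective tables exist (the constant lines are forced to a single common value, and the remaining free sub-table is large enough to cover the other values). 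Now let $f$ agree with $f_A$ on $A^2$ and with $f_C$ on $C^2$, be constantly some $b_0\in B$ on $B^2$, enumerate $B\setminus\{b_0\}$ injectively on $H$, and take everywhere else the values forced by monotonicity (on $\{a\}\times B$ and $B\times\{c\}$ one is pushed into $A\cup\{b_0\}$, on $A\times B$ into $A$, and so on). One then verifies that $f$ is monotone — routine but slightly lengthy, and precisely where the constancy of those rows and columns is used — that $f$ is onto (since $f_A,f_C$ already are and $H$ supplies $B\setminus\{b_0\}$), and that $f$ depends on both variables. For $n<B(m,k)+1$ one uses fewer cells of $H$.

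\emph{If $n>B(m,k)+1$, then $\bbP$ is $2$-S\l upecki.} Suppose for contradiction that some surjective $f\colon\bbP^2\to\bbP$ depends on both variables. Through a chain of normalisation claims I would bring $f$ (composing with automorphisms of $\bbP$ where needed) into a form satisfying: (1) $f^{-1}(B)\subseteq B^2\cup(A\times C)\cup(C\times A)$; (2) $f(B^2)=\{b_0\}$ for a single $b_0$; (3) $f(A^2)=A$ and $f(C^2)=C$; (4) for every $(s,t)\in A\times C$ with $f(s,t)\in B\setminus\{b_0\}$, both $x\mapsto f_A(s,x)$ and $x\mapsto f_C(x,t)$ are constant (and dually on $C\times A$). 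The cheap steps are: $A\subseteq f(A^2)$ and $C\subseteq f(C^2)$, because a preimage of a bottom (top) element can be pushed down (up) to a minimal (maximal) one, which then lies in $A^2$ (in $C^2$); and $f$ cannot take a top value on $A^2$, since it would then be constantly that value on an up-set containing $B^2$. Property~(4) falls out of monotonicity exactly as in the construction: a $B$-value at $(a,c)$ squeezes $f$ on $\{a\}\times B$ into $A$, and since $(a,a')\le(a,b)$ for all $a'\in A$ and $b\in B$ this forces $x\mapsto f_A(a,x)$ to be constant, and dually for the $c$-column of $f_C$. The step I expect to be the main obstacle is (2): assuming $|f(B^2)|\ge 2$, one invokes Lemma~\ref{lemma-auto} to arrange that $f$ fixes two distinct diagonal elements $b,b'\in B$, and must then produce from this an embedded copy of $\bbP$ in $\bbP^2$ on which $f$ is onto, so that Lemma~\ref{lemma-embedding} makes $f$ essentially unary, contradicting that it depends on both variables.

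With $f$ in normal form, put $H=\{(s,t)\in(A\times C)\cup(C\times A):f(s,t)\in B\setminus\{b_0\}\}$. By (1), (2) and surjectivity, $f$ maps $H$ onto $B\setminus\{b_0\}$, so $|H|\ge n-1$, while by (3) and (4) the set $H$, witnessed by $f_A$ and $f_C$, is an admissible configuration for the optimisation problem underlying Definition~\ref{def-bmk}. The last ingredient is a purely combinatorial lemma: the largest admissible $|H|$ is $\max(\mu(m,k),mk)=B(m,k)$. The lower bound is given by the two configurations used above; for the upper bound, let $\alpha,\beta$ (resp.\ $\gamma,\delta$) be the sizes of the two projections of $H$ onto the $A$-coordinates (resp.\ $C$-coordinates). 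If one of these equals $m$ or $k$, then, since a table with all rows or all columns constant cannot be onto while also having a constant transverse line, $H$ must lie entirely in one of $A\times C$ or $C\times A$, whence $|H|\le mk$; otherwise all of $\alpha,\beta,\gamma,\delta$ lie strictly between $0$ and their bound, the constant rows and columns of $f_A$ are forced to share one value so its free sub-table must still cover $m-1$ values, i.e.\ $(m-\alpha)(m-\beta)\ge m-1$ (and likewise $(k-\gamma)(k-\delta)\ge k-1$ for $f_C$), and then $|H|\le\alpha\gamma+\beta\delta\le\mu(m,k)$. Hence $n-1\le B(m,k)$, contradicting $n>B(m,k)+1$; therefore $\bbP$ is $2$-S\l upecki, and the proof is complete.
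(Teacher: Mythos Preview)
Your overall architecture matches the paper's: the construction for $n\le B(m,k)+1$ is essentially identical to Lemma~\ref{Lemma:max_mk}, and for $n>B(m,k)+1$ you aim for the same four normalisation properties and the same combinatorial count as in Lemma~\ref{654175}. Two points, however, deserve attention.

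First, a minor one: your property~(1), ``$f^{-1}(B)\subseteq B^2\cup(A\times C)\cup(C\times A)$'', is stronger than what is actually true or needed. Nothing prevents $f(a,b)\in B$ for $(a,b)\in A\times B$; what the paper proves (Claim~3) is only that any value in $B$ attained on $(A\times B)\cup(B\times A)\cup(C\times B)\cup(B\times C)$ is already attained on $B^2$. That weaker statement suffices for the final count, so this does not break your argument, but the property as you wrote it is false in general.

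The real gap is your plan for property~(2), that $|f(B^2)|=1$. You propose: assume $|f(B^2)|\ge 2$, apply Lemma~\ref{lemma-auto} to get two diagonal fixed points in $B$, and then find an embedded copy of $\bbP$ on which $f$ is onto, invoking Lemma~\ref{lemma-embedding}. But two fixed points in $B$ do not give you $n$ elements of $B^2$ mapping onto all of $B$; if $|f(B^2)|=2$ there is simply no such embedding. The paper handles this quite differently. It first proves (Claim~4, which you omit) that some $(x,y)\in(A\times C)\cup(C\times A)$ has $f(x,y)\in B$; \emph{this} is where the embedding argument is used, because if no such pair existed then Claim~3 would force $f(B^2)=B$ and one could embed $\bbP$. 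Then Claim~6 splits into two cases. If $f_A$ and $f_C$ depend on different variables, a short direct sandwich argument gives $|f(B^2)|=1$. If they depend on the same variable, one normalises so that $f(x,x)=x$ on $A\cup C$, uses Lemma~\ref{lemma-auto} to also fix two elements $b_1,b_2\in B$, and then retracts onto the subposet $\bbR\cong m\oplus 2\oplus k$: the composite $r\circ f$ restricted to $\bbR^2$ is idempotent, hence a projection by idempotent triviality of $\bbR$, and this contradicts Claim~4 since the pair $(x,y)$ found there is sent into $B$, not to $x$ or $y$. The retraction-to-$m\oplus 2\oplus k$ idea and the prior establishment of Claim~4 are the missing ingredients in your sketch.
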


We split the proof into two separate lemmas (Lemma \ref{Lemma:max_mk} and Lemma \ref{654175}) according to the two directions of the statement.

\begin{definition}  Given any map $f:A^2 \cup C^2 \rightarrow A \cup C$ such that $f(A^2) \subseteq A$ and $f(C^2)\subseteq C$,  for $T \in \{A,C\}$, let $l(T)$ denote the set of all $s \in T$ such that the map  $x\mapsto f(s,x)$ is constant, and let $l(s)$ denote the value of this map. Similarly let $r(T)$ denote the set of all $t \in T$ such that the map  $x\mapsto f(x,t)$ is constant, and let $r(t)$ denote the value of this map. \end{definition}
First, we take on the {\it only if} direction. 
We actually formulate its contrapositive:

\begin{lemma}\label{Lemma:max_mk} If $n \leq B(m,k)+1$, then $m\oplus n \oplus k$ is not  2-S\l upecki.
    
\end{lemma}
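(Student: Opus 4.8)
The plan is to prove the contrapositive directly: assuming $n \leq B(m,k)+1$, construct an explicit surjective binary polymorphism $f:\bbP^2 \to \bbP$ that is essentially binary (depends on both variables). Recall $B(m,k) = \max(\mu(m,k),mk)$, so the hypothesis $n \leq B(m,k)+1$ splits into two (possibly overlapping) regimes: either $n \leq mk+1$, or $n \leq \mu(m,k)+1$. I would handle the ``cheap'' regime $n \leq 2mk+1$ (which certainly covers $n\le mk+1$) by the naive construction and the harder regime $n \le \mu(m,k)+1$ by exploiting the optimizing quadruple $(\alpha,\beta,\gamma,\delta)$ in Definition \ref{def-bmk}.

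First I would set up the skeleton of $f$. On the diagonal-type blocks we want $f$ to restrict to surjective maps $A^2 \to A$ and $C^2 \to C$; on $B^2$ we want $f$ to be constant, say with value $b_0$; and on the mixed blocks $A\times C$ and $C\times A$ we will place the remaining $n-1$ values of $B$. Monotonicity of $\bbP = m\oplus n\oplus k$ is very forgiving here: any map sending $A^2$ into $A$, $B^2$ into $B$, $C^2$ into $C$, $(A\times C)\cup(C\times A)$ into $B$, and being order-preserving on each ``comparable'' pair is automatically a polymorphism, because the only comparabilities in $\bbP^2$ across blocks go from $A$-level tuples up to $B$-level tuples up to $C$-level tuples, and we only need $f(\text{lower block}) \le f(\text{upper block})$ whenever the tuples are comparable. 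So the construction reduces to a combinatorial bookkeeping of where to put the $n-1$ non-$b_0$ elements of $B$.

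For the regime $n \le 2mk+1$: I would simply use all of $(A\times C)\cup(C\times A)$, which has $2mk$ cells, to realize the $\le 2mk$ values $B\setminus\{b_0\}$; define $f$ on $A^2$ by $f(a_i,a_j)=a_{\max(i,j)}$ (surjective onto $A$), similarly on $C^2$, and $f\equiv b_0$ on $B^2$. Monotonicity is checked block by block as above, surjectivity and dependence on both variables are immediate (e.g. $f(a_0,c_0)\in B$ while $f(a_0,a_0)=a_0$ shows dependence on the second variable, and symmetrically for the first). For the harder regime, the point of $\mu(m,k)=\alpha\gamma+\beta\delta$ with $(m-\alpha)(m-\beta)\ge m-1$, $(k-\gamma)(k-\delta)\ge k-1$ is this: we do \emph{not} use all $2mk$ mixed cells, but instead a subset of size $\mu(m,k)$ that is structured so the maps $A^2\to A$ and $C^2\to C$ can still be made surjective despite the ``constancy'' constraints that placing a $B$-value in cell $(s,t)\in A\times C$ imposes (namely, forcing $x\mapsto f_A(s,x)$ and $x\mapsto f_C(x,t)$ to be constant, by the monotonicity interaction spelled out in property (4) of the outline). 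Concretely: reserve $\alpha$ elements of $A$ as ``rows that must be constant'' on $A^2$ and $\gamma$ elements of $C$ as ``columns that must be constant''; the $\alpha\gamma$ cells of the corresponding sub-block of $A\times C$ get $B$-values. The constraint $(m-\alpha)(m-\beta)\ge m-1$ guarantees the \emph{remaining} $(m-\alpha)\times(m-\beta)$ free sub-block of $A^2$ is large enough to still hit all $m$ values of $A$ (here $\beta$ counts columns of $A^2$ used up by the $C\times A$ side). Dually for $C$ with $\gamma,\delta$. This yields $\alpha\gamma+\beta\delta = \mu(m,k)$ usable mixed cells, enough for $n-1 \le \mu(m,k)$ values.

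The main obstacle I expect is the careful verification in the $\mu(m,k)$ regime that one can \emph{simultaneously} (i) assign the $B$-values injectively enough to cover $B\setminus\{b_0\}$, (ii) keep the required rows/columns of $f_A$ and $f_C$ genuinely constant, and (iii) still have the free sub-blocks of $A^2$ and $C^2$ surject onto $A$ and $C$ respectively — all while the whole map stays monotone. The inequalities $(m-\alpha)(m-\beta)\ge m-1$ and $(k-\gamma)(k-\delta)\ge k-1$ are exactly what make (iii) feasible (a $p\times q$ matrix of entries from an $m$-set with $pq\ge m-1$ can be arranged to hit $m-1$ prescribed values, and the $m$-th value $a_{\max}$ can be forced on the diagonal corner), and property (4)'s constancy requirement is what (ii) encodes, so the proof is really a matter of threading these together without contradiction; I would present it as an explicit table/assignment and then do the block-by-block monotonicity check, noting that dependence on both variables follows as soon as some mixed cell receives a value in $B\setminus\{b_0\}$ together with the surjectivity of $f_A$ and $f_C$.
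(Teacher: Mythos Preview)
Your treatment of the $\mu(m,k)$ regime is essentially the paper's argument, but there is a genuine gap in your ``cheap'' regime. The constancy constraint you correctly identify for the harder case --- that putting a value of $B\setminus\{b_0\}$ at a cell $(a,c)\in A\times C$ forces the row $x\mapsto f_A(a,x)$ and the column $x\mapsto f_C(x,c)$ to be constant --- is not a feature of the $\mu(m,k)$ regime; it is forced by monotonicity in \emph{every} regime. Concretely: if $f(a,c)=b\ne b_0$ then for any $b'\in B$ the chain $(a,a')<(a,b')<(a,c)$ together with $(a,b')<(b'',b')$ and $f(b'',b')=b_0$ forces $f(a,b')\in A$ and hence $f(a,a')=f(a,b')$ for every $a'\in A$. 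With your choice $f_A(a_i,a_j)=a_{\max(i,j)}$ only the single row $a_{m-1}$ is constant, so at most $k$ cells of $A\times C$ (and symmetrically $m$ cells of $C\times A$) are available --- nowhere near $2mk$. Your claimed ``naive'' construction is therefore not monotone once $n>m+k+1$, and certainly does not handle $n\le 2mk+1$.

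The paper's fix for the case $B(m,k)=mk$ is to take $f_A$ equal to the \emph{first projection} and $f_C$ equal to the \emph{second projection}. Then every row of $f_A$ is constant and every column of $f_C$ is constant, so $l(A)=A$, $r(C)=C$, giving exactly $|A\times C|=mk$ usable mixed cells (and none from $C\times A$, since $l(C)=r(A)=\emptyset$). This handles precisely $n\le mk+1$, which is all that is needed when $B(m,k)=mk$. You must also specify $f$ on the blocks $A\times B$, $B\times A$, $B\times C$, $C\times B$: the paper sets $f(a,b)=l(a)$ when $a\in l(A)$, $f(b,c)=r(c)$ when $c\in r(C)$, and $b_0$ on the remaining such cells, then checks the covering relations one by one. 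With these two corrections your outline becomes the paper's proof.
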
 
 
  \begin{proof}

With $\bbP=m\oplus n \oplus k$, we will exhibit an essentially binary polymorphism $f: \bbP^2 \to \bbP$.
To do so, we start by defining $f$ partially on $A^2$ and $C^2$.
Let us denote the corresponding restrictions by $f_A$ and $f_C$. \\

{\bf Claim 0.} There exist onto maps $f_A:A^2 \rightarrow A$ and $f_C:C^2 \rightarrow C$ such that there are exactly $B(m,k)$ pairs $(s,t) \in A\times C \cup C \times A$ where  both the maps $x\mapsto f(x,t)$ and $x\mapsto f(s,x)$ are constant, where, for convenience, $f$ stands for the corresponding  $f_A$ or $f_C$. \\
      
{\it Proof of Claim 0.} If $B(m,k) = mk$, then we define $f$ to be the first projection on $A$ and the second projection on $C$. 
Clearly there are exactly $B(m,k)=mk$ pairs $(s,t) \in A\times C$ (and none in $C \times A$) such that both the maps $x\mapsto f(x,t)$ and $x\mapsto f(s,x)$ are constant. 
Otherwise, $B(m,k) = \alpha\gamma+\beta\delta$ with $0<\alpha,\beta<m$, $0<\gamma,\delta < k$. 
By definition of $B(m,k)$ there exists a map $g$ from $\{a_{\alpha},\dots,a_{m-1}\} \times  \{a_{\beta},\dots,a_{m-1}\}$ onto $\{a_1,\dots,a_{m-1}\}$.   
Then define
 $$ f(a_i,a_j)= \begin{cases} a_0 & \mbox{ if $i < \alpha$ or $j < \beta$}, \\
g(a_i,a_j) & \mbox{ otherwise.}
\end{cases}$$

\begin{center}
\end{center}

   \begin{figure}[htb]
\begin{center}
\begin{tabular}{ c|| ccc | ccc |}

$f$ & $a_0$ & $\cdots$ & $a_{\beta-1}$ & $a_{\beta}$ & $\cdots$ & $a_{m-1}$ \\
\hline \hline
$a_0$ &&&&&&\\
$\vdots$ && $a_0$ &&&$a_0$&\\
$a_{\alpha-1}$ &&&&&&\\
\hline
$a_{\alpha}$ &&&&&$g$&\\
$\vdots$ &&$a_0$ & &&\small{with image}&\\
$a_{m-1}$ &&&&&\small{$\{a_1,\dots,a_{m-1}\}$}&\\
\hline
\end{tabular}
 \caption{The ``multiplication table'' for $f$. }
 \end{center}
\end{figure}

%By maximality of $B(m,k)$,  
By definition of $f$ there are exactly $\alpha$ values $a \in A$ such that the map $x\mapsto f(a,x)$ is constant, and $\beta$ values $a \in A$ such that the map $x\mapsto f(x,a)$ is constant.
 Similarly, we can define $f$ on $C^2$ to be onto $C$ and such that there are exactly $\gamma$ values $c \in C$ such that the map $x\mapsto f(x,c)$ is constant and $\delta$ values of $c \in C$ such that the map $x\mapsto f(c,x)$ is constant. 
 In particular, this shows that there are exactly $B(m,k)$ pairs $(s,t) \in (A\times C) \cup (C \times A)$ such that both the map $x\mapsto f(x,t)$ and $x\mapsto f(s,x)$ are constant. \\

Now we will extend $f$ to be a fully defined map on $P^2$.
 Let $b_0 \in B$; by our hypothesis and Claim 0, there exists a map $h:(l(A)\times r(C)) \cup (l(C) \times r(A)) \rightarrow B \setminus \{b_0\}$ which is onto. We now define our map $f$ as follows:
 
 $$f(x,y)= \begin{cases} f_A(x,y)& \mbox{ if $(x,y) \in A^2$}, \\
 					 f_C(x,y)& \mbox{ if $(x,y) \in C^2$}, \\
h(x,y) & \mbox{if $(x,y) \in (l(A)\times r(C)) \cup (l(C) \times r(A))$},\\
l(x) & \mbox{if $x \in l(A)$ and $y \in B$},\\
r(y) & \mbox{if $y \in r(A)$ and $x \in B$},\\
l(x) & \mbox{if $x \in l(C)$ and $y \in B$},\\
r(y) & \mbox{if $y \in r(C)$ and $x \in B$},\\
b_0 & \mbox{otherwise.}

\end{cases}$$

 Clearly $f$ is onto; and since $B(m,k) \geq 4$ implies $l(A)\times r(C) \neq \emptyset$, there is some pair $(a,c) \in A \times C$ such that $f(a,c) \in B$; since $f(a,a) \in A$ and $f(c,c) \in C$, $f$ depends on both variables. Finally we verify that $f$ is monotone. It suffices to verify that $f(x,y) \leq f(u,v)$ where $(u,v)$ covers $(x,y)$ in $\bbP^2$; and it is easy to see that this occurs if and only if $u=x$ and $v$ covers $y$ or $v=y$ and $u$ covers $x$. 
 
 \begin{enumerate}
 \item $(a,a') < (a,b)$: if $a \in l(A)$ then $f(a,a')=f(a,b) = l(a)$; otherwise $f(a,a') \leq b_0 = f(a,b)$. The argument for $(b,a')$ is identical.
 %\item $(a',a) < (b,a)$: if $a \in r(A)$ then $f(a',a)=f(b,a) = r(a)$; otherwise $f(a',a) \leq b_0 = f(b,a)$. 
 \item $(a,b) < (b',b)$: $f(a,b) \in A \cup \{b_0\}$ and $f(b',b) = b_0$. The argument for $(b',a)$ is identical.
 \item $(a,b) < (a,c)$: if $a \not\in l(A)$ then  $f(a,b) = f(a,c)=b_0$; otherwise $f(a,b) \in A$ and $f(a,c) \in B$. The argument for $(b,a) < (c,a)$ is identical.
 \item the remaining cases are dual to the previous ones (replacing $A$ by $C$).  
 \end{enumerate}
 \mbox{}\\
  \end{proof}

It is time to prove the converse (the {\it if} direction of Theorem \ref{757370}).

\begin{lemma} \label{654175}     
  The poset $\bbP=m\oplus n \oplus k$ is 2-S\l upecki if $n > B(m,k)+1$. 
\end{lemma}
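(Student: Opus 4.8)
The plan is to prove the contrapositive-free direction directly: assume $n > B(m,k)+1$ and show that every surjective binary polymorphism $f:\bbP^2 \to \bbP$ is essentially unary. Since $\bbP$ is idempotent trivial and (by Lemma~\ref{629952}) the identity is an isolated loop in $Hom(\bbP,\bbP)$, by Lemma~\ref{lemma-embedding} it suffices to exhibit an embedding $e:\bbP \hookrightarrow \bbP^2$ such that $f|_{e(\bbP)}$ is onto. Actually it is cleaner to argue directly that $f$ must be essentially unary by a sequence of normalizing reductions, composing $f$ with automorphisms of $\bbP$ (which only permute the middle antichain $B$ and hence do not affect essential arity). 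So the overall strategy: first establish structural constraints on $f$, then derive a contradiction with $n > B(m,k)+1$ unless $f$ is essentially unary.

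The key steps, in order. \textbf{Step 1 (levels are preserved).} Since $A$ consists of the minimal elements and $A^s$ the minimal elements of $\bbP^s$, monotonicity forces $f(A^2)$ to be a down-set, hence contained in $A$ unless $f$ is constant-ish; more carefully, $f(A^2) \subseteq A$ and $f(C^2) \subseteq C$ and $f(B^2)\subseteq B$ — this needs the surjectivity together with the fact that $|B|$ is large, so that $f$ cannot collapse too much. Argue: if some $f(a,a') \in B$ with $a,a' \in A$, then the entire principal up-set above $(a,a')$ (which is large) gets squeezed, and one counts to contradict surjectivity onto $B$; similarly rule out $f(A^2) \cap C \neq \emptyset$. \textbf{Step 2 (normalize the diagonal on $B$).} Using Lemma~\ref{lemma-auto}, after composing $f$ with suitable automorphisms fixing $A \cup C$ pointwise (and with a variable-twist $\sigma$), we may assume $f(b,b)=b$ for at least two $b \in B$; then a connectedness/monotonicity argument on $B^2$ (which is a ``grid'' antichain between $A^2$ and $C^2$) forces $f(B^2)=\{b_0\}$ to be a single element, or else $f$ restricted to $B^2$ is a projection and we can finish by building the embedding $e$. \textbf{Step 3 (locate where $f$ hits $B$).} Show $f^{-1}(B) \subseteq B^2 \cup (A\times C) \cup (C\times A)$: a pair $(x,y)$ with, say, $x\in A$, $y\in B$ maps into $A\cup\{b_0\}$ by monotonicity relative to the diagonal, and the mixed pairs $A\times B$, $B\times C$ etc. are handled similarly. \textbf{Step 4 (counting).} Surjectivity onto $B$ now forces $f$ to take $n-1$ distinct values in $B\setminus\{b_0\}$ on the set $(A\times C)\cup(C\times A)$. \textbf{Step 5 (the rigidity lemma, property (4) in the sketch).} For $(s,t)\in A\times C$ with $f(s,t)\in B\setminus\{b_0\}$, show by monotonicity that both $x\mapsto f(x,t)$ on $C$ and $x\mapsto f(s,x)$ on $A$ must be constant — i.e. $s\in l(A)$ and $t\in r(C)$ in the notation of Definition~\ref{def-bmk}'s neighbourhood (the $l,r$ definition). \textbf{Step 6 (reduce to the combinatorial bound).} The number of such usable pairs is then at most $|l(A)|\cdot|r(C)| + |l(C)|\cdot|r(A)|$; meanwhile $f_A:A^2\to A$ and $f_C:C^2\to C$ must still be onto, and having many constant rows/columns obstructs ontoness — precisely, if $|l(A)|=m-\alpha$ etc. then ontoness of $f_A$ forces $(m-\alpha)(m-\beta)\ge m-1$ (the non-constant part of the table must already cover $A$ minus one element), matching Definition~\ref{def-bmk}. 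Hence the count is at most $\mu(m,k)\le B(m,k)$, so $n-1\le B(m,k)$, contradicting $n>B(m,k)+1$. Therefore no such non-essentially-unary $f$ exists; when the reductions above produce instead a projection on the relevant grid we directly obtain the embedding $e$ of Lemma~\ref{lemma-embedding} and conclude $f$ is essentially unary.

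The main obstacle I expect is \textbf{Step 2 and Step 5}: making precise, via monotonicity alone, that $f(B^2)$ collapses to a single point once the diagonal is normalized, and the rigidity claim that a middle-value on a mixed pair forces a full constant row and column. The subtlety is that $B^2$ sits ``between'' $A^2$ and $C^2$ but the covers within $\bbP^2$ linking these layers are only the coordinate-wise ones, so one has to chase, for a fixed mixed pair $(s,t)\in A\times C$ with $f(s,t)=b\neq b_0$, all pairs covering or covered by it, and use that $f(s,t')\ge f(s,t)$ for $t'>t$ forces $f(s,t')\in C$, combined with $f(s,b')$ for $b'\in B$ being squeezed between $f(s,a)\in A$ and $f(s,c)\in C$. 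Keeping the bookkeeping of which automorphism twists have been applied (so that ``essentially unary for the twisted $f$'' transfers back to the original $f$) is also where care is needed, but it is routine since $Aut(\bbP)$ acts only on $B$. Finally, one should double-check the edge case $|B|=2$ (where Lemma~\ref{lemma-auto} doesn't apply) is vacuous here, since $n>B(m,k)+1\ge 5$ already forces $n\ge 6>2$.
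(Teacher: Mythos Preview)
Your overall architecture matches the paper's: argue by contrapositive, establish level containments, show $|f(B^2)|=1$, localise $f^{-1}(B\setminus\{b_0\})$ to $(A\times C)\cup(C\times A)$, prove the rigidity property, and count against $B(m,k)$. Steps 3--5 are essentially the paper's Claims 3 and 5, and the final counting is Claim 7. But two steps have genuine gaps.

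The critical one is \textbf{Step 2}. Your ``connectedness/monotonicity argument on $B^2$'' cannot work as stated: $B^2$ is an antichain in $\bbP^2$, so monotonicity imposes no relation among values of $f$ inside $B^2$, and Lemma~\ref{lemma-auto} only produces two diagonal fixed points --- nowhere near enough to force $f|_{B^2}$ to be a projection. Your dichotomy ``singleton or projection'' is simply unjustified when $2\le |f(B^2)|<n$. The paper's Claim 6 handles this by a case split you do not have. Case (a): if $f_A$ and $f_C$ depend on \emph{different} variables, pick $a\notin l(A)$ and $c\notin r(C)$ and squeeze $f(a,x)\le f(a,c)\le f(b,c)\le f(y,c)$ to pin $f(B^2)$ to a single value directly. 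Case (b): if $f_A$ and $f_C$ are both essentially unary in the \emph{same} variable, normalise so that $f(x,x)=x$ on $A\cup C$, apply Lemma~\ref{lemma-auto} to also get $f(s,s)=s$ for $s\in\{b_1,b_2\}$, and then \emph{retract} onto the subposet $\bbR\cong m\oplus 2\oplus k$ induced by $A\cup\{b_1,b_2\}\cup C$: the composite $r\circ f|_{\bbR^2}$ is idempotent, hence a projection (since $\bbR$ is idempotent trivial), which contradicts the existence of a mixed pair $(x,y)\in(A\times C)\cup(C\times A)$ with $f(x,y)\in B$ (this existence is the paper's Claim 4, itself proved via the embedding of Lemma~\ref{lemma-embedding}). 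A secondary gap is \textbf{Step 1}: the containment $f(A^2)\subseteq A$ is not automatic, and your counting sketch is not a proof. The paper gets only $f(A^2)\supseteq A$ and $f(B^2)\subseteq B$ for free (Claim 1), and then \emph{modifies} $f$ on offending pairs to force $f(A^2)=A$, $f(C^2)=C$ (Claim 2), using Lemma~\ref{lemma-isolated-unary} to verify the modified map stays essentially binary. Finally, in \textbf{Step 6} you are missing the key observation (Claim 7) that when $l(A)$ and $r(A)$ are both nonempty, all the constant rows and columns of $f_A$ share a \emph{single} common value $w$, so $f_A$ restricted to $(A\setminus l(A))\times(A\setminus r(A))$ must cover $A\setminus\{w\}$; this is what yields $(m-|l(A)|)(m-|r(A)|)\ge m-1$, and your indexing $|l(A)|=m-\alpha$ is backwards relative to Definition~\ref{def-bmk}.
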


 \begin{proof} 
 
Suppose that $\bbP$ admits a binary, not essentially unary onto polymorphism $f$; we want to show that $n \leq B(m,k)+1$. Without loss of generality we may assume that $n \geq 3$. Our goal is to obtain from $f$ a binary, not essentially unary onto polymorphism with a very specific form. \\
 
  \noindent{\bf Claim 1.}  {\em $f(A^2) \supseteq A$, $f(C^2) \supseteq C$   and $f(B^2) \subseteq B$. }\\
 
 \noindent{\em Proof of Claim 1.} The first statement is clear: $f$ is onto, and every element of $\bbP^2$ is above some minimal element which is in $A^2$. The second statement is proved dually. It is then clear that $f(B^2) \subseteq B$ since an element $(b,b') \in B^2$ is above every element of $A^2$ and below every element of $C^2$, so by our first two statements $a \leq f(b,b') \leq c$ for all $a \in A$ and $c \in C$ which shows that $f(b,b') \in B$.\\
 
 \noindent{\bf Claim 2.}  {\em We may assume that $f(C^2)=C$ and $f(A^2)=A$.}\\
 
 \noindent{\em Proof of Claim 2.} Suppose that $f(c_i,c_j) = y \not\in C$ for some $i,j$. Suppose that no other pair is mapped to $y$ by $f$. Then $f(s,t) < y$ for all $s,t \in A \cup B$. Since $y \not\in C$ this means that $f((A\cup B)^2) = A$ (using Claim 1). But $A \cup B$ is connected and $A$ is not, so this is impossible.  Thus 
 $f(u,v)=y$ for some $(u,v)\neq(c_i,c_j)$. Define a map as follows:
 
 $$g(s,t)= \begin{cases} c_1 & \mbox{ if $(s,t)=(c_i,c_j)$}, \\
f(s,t) & \mbox{ otherwise.}
\end{cases}$$
It is clear that $g$ is monotone, onto, and since obviously $f < g$ and the identity is alone in its connected component, we conclude by Lemma \ref{lemma-isolated-unary} that $g$ depends on both variables.  The argument for $A$ is identical. 
  \\
  
   \noindent{\bf Claim 3.} {\em Let $b \in B$. If $b \in f( (A \times B) \cup (B \times A)) \cup  f((C \times B) \cup (B \times C))$ then $b \in f(B^2)$. }\\
 
 \noindent{\em Proof of Claim 3.} Let $(x,y) \in (A \times B) \cup (B \times A)$ with $f(x,y)=b$. Then $(x,y)$  is comparable to a pair $(u,v) \in B^2$ so $f(u,v)=b$ (since $f(u,v) \in B$). Similarly if $(x,y) \in (C \times B) \cup (B \times C)$. \\

  \noindent{\bf Claim 4.} {\em There exists $(x,y) \in (A\times C) \cup (C \times A)$ such that $f(x,y) \in B$.}\\ 

 \noindent{\em Proof of Claim 4.} Otherwise, it follows from Claim 3 that $f(B^2)=B$. By Claim 2 we can then find subsets $A' \subseteq A$, $B' \subseteq B$ and $C' \subseteq C$ of size $m,n,k$ respectively such that $f(A' \cup B' \cup C') = \bbP$. But this gives an embedding $e$ of $\bbP$ into $\bbP^2$ such that $f(e(\bbP))=\bbP$, contradicting Lemma \ref{lemma-embedding}. \\

  \noindent{\bf Claim 5.} {\em Let $b_0 \in f(B^2)$ and let $(a,c) \in A \times C$. 
  \begin{enumerate}
 \item If $f(a,c) \in B\setminus \{b_0\}$, then $(a,c) \in l(A) \times r(C)$;
 \item If $f(c,a) \in B\setminus \{b_0\}$, then $(c,a) \in l(C) \times r(A)$.  
 \end{enumerate}}
 \noindent{\em Proof of Claim 5.} We prove (1) (the proof of (2) is identical). Let $f(b,b')=b_0$ where $b,b',b_0 \in B$. Then $f(b,c) \geq f(a,c),f(b,b')$ so $f(b,c) \in C$. On the  other hand, $f(b,c) \leq f(x,c)$ for all $x \in C$, thus $c \in r(C)$. We have  $f(a,b') \leq f(a,c),f(b,b')$ so $f(a,b') \in A$. On the other hand, $f(a,b') \geq f(a,x)$ for all $x \in A$, thus $a \in l(A)$.\\

 \noindent{\bf Claim 6.} {\em $|f(B^2)|=1$.}\\

 \noindent{\em Proof of Claim 6.} We consider two cases. First suppose that the restrictions $f_A$ and $f_C$ of $f$ to $A^2$ and $C^2$ respectively depend on two different variables, without loss of generality suppose $f_C$ depends on its first variable and $f_A$ depends on its second. This precisely means there exist $a \in A \setminus l(A)$ and $c \in C \setminus r(C)$.  Choose any $b \in B$. Since $f(a,x) \leq f(a,c) \leq f(b,c) \leq f(y,c)$ for all $x \in A$ and all $y \in C$, we conclude that $f(a,c)$ and $f(b,c)$ must be in $B$ (and equal). Now if $b' \in B$ arbitrary, we have $f(b,b') \leq f(b,c)$ and hence they must also be equal. Hence $f(B^2) = \{f(a,c)\}$. 
 
 If the restrictions of $f$ to $A^2$ and $C^2$ do not depend on different variables, it implies they are both essentially unary (and depend on the same variable.) By applying an appropriate $\tau \in Aut(\bbP^2)$, we can then suppose that $f(x,x)=x$ for all $x \in A \cup C$ (notice that Claim 4 guarantees we still obtain an essentially binary operation.)
 Suppose for a contradiction that $|f(B^2)| \geq 2$. By Lemma \ref{lemma-auto}, we may assume that $f(s,s)=s$ for $s \in \{b_1,b_2\}$ (again Claim 4 guarantees we may apply automorphisms and obtain an essentially binary operation). Let $\bbR$ be the subposet of $\bbP$ induced by $A \cup \{b_1,b_2\} \cup C$. 
 It is isomorphic to $m \oplus 2 \oplus k$ and thus is idempotent trivial. 
 There is an obvious retraction $r$ of $\bbP$ onto $\bbR$, that  fixes $A$ and $C$ pointwise and sends $B$ onto $\{b_1,b_2\}$. 
 Consider the restriction of $r \circ f$ to $\bbR^2$: it is idempotent and hence a projection. In particular, for the pair $(x,y)$ from Claim 4,  we will have that $(r \circ f )(x,y) \in \{x,y\}$, but since $f(x,y) \in B$, this is a contradiction. \\
 
  \noindent{\bf Claim 7.} {$|( l(A) \times r(C)) \cup (l(C) \times r(A))|\leq B(m,k)$.}\\

 \noindent{\em Proof of Claim 7.} Let $M=|( l(A) \times r(C)) \cup (l(C) \times r(A))|$. If one of the values in $\{|r(A)|,|r(C)|,|l(A)|,|l(C)|\}$ is equal to 0, then clearly $M \leq mk \leq B(m,k)$. So we may now assume that all 4 values are non-zero.  If both $|r(A)|$ and $|l(A)|$ are non-zero it means that all the maps $x \mapsto f(a,x)$ and $x \mapsto f(x,a)$ that are constant must have the same value, i.e. there exists some $w \in A$ such that $l(a) = r(a') = w$ for all $a \in l(A)$ and all $a' \in r(A)$. By Claim 2 it means that the restriction of $f_A$ to $(A -  l(A)) \times (A- r(A))$ has image $A - \{w\}$, and thus $(m-|l(A)|)(m-|l(B)|) \geq m-1$, and in particular, $|l(A)|,|r(A)| < m$. The same argument shows that the values $|l(C)|,|r(C)|$ also satisfy the required inequalities of Definition \ref{def-bmk}, and thus $M \leq \mu(m,k) \leq B(m,k)$. \\

 We can now conclude the proof. Let $f(B^2)=\{b_0\}$. By Claims 3, 5 and 6, any $b \in B \setminus \{b_0\}$ must be in the image of $( l(A) \times r(C)) \cup (l(C) \times r(A))$. 
 Hence by Claim 7 $|B|-1 \leq |( l(A) \times r(C)) \cup (l(C) \times r(A))|\leq B(m,k)$ and we are done. 
  \end{proof}
  
  \section{Conclusion} \label{sect-conclusion}
  
  We have presented various necessary and sufficient conditions for a reflexive digraph to be S\l upecki. We've shown in particular that there exist posets that triangulate spheres that are idempotent trivial but not 2-S\l upecki; and some idempotent trivial posets that are 2-S\l upecki but not 3-S\l upecki. Here now are some related questions:
  
  \begin{enumerate}
  
  \item Can the condition that the identity is isolated be removed in Lemma \ref{lemma-embedding} ? 
  \item If $\bbG$ is strongly connected and  idempotent trivial, is the identity  alone in its {\em strong} component ? Notice that if this holds, then combining Lemmas \ref{lemma-embedding-strong} and \ref{lemma-not-strong-id-alone} with Lemma \ref{lemma-embedding} gives a positive answer to question (1). 
  \item  What is the algorithmic complexity of recognising S\l upecki (idempotent trivial) digraphs ? 
  
  \item An operation $f$ is {\em conservative} if $f(x_1,\dots,x_n) \in \{x_1,\dots,x_n\}$ for all $x_i$. It is known that a poset of size at least 3 is idempotent trivial if and only if all its idempotent binary polymorphisms are conservative \cite{larose-firstpaper}; hence a poset is idempotent trivial precisely when all its 2-element subsets are pp-definable by gadgets. Does this property hold for reflexive digraphs in general ? Also, although it is not true in general that one can always find gadgets that are trees, is it possible that an algorithm such as singleton arc-consistency (as described in \cite{ChenDalmauGrussien}) might determine if all 2-element subsets are constructible ? If so, this might lead to interesting algorithms for recognising idempotent trivial posets. 
  
  \item the bound $B(m,k)$ we introduced is intriguing from a combinatorial standpoint. Investigate.
  
  \item Are there posets (or more generally digraphs) that are not S\l upecki but $n$-S\l upecki for arbitrary large $n$ ? What is the relationship to the size of the digraph ? 
  
  \item The idempotent trivial posets of height 1 are known \cite{coro}; they are the height 1 posets that are connected and such that no element has a unique upper nor unique lower cover. Are they all S\l upecki ? 
 \end{enumerate}

 \bibliographystyle{plain}

\end{document}